\DeclareMathOperator{\Ad}{Ad}
\DeclareMathOperator{\ad}{ad}
\DeclareMathOperator{\Aut}{Aut}
\DeclareMathOperator{\Id}{Id}
\DeclareMathOperator{\Hgt}{ht}
\DeclareMathOperator{\Ric}{Ric}
\DeclareMathOperator{\Span}{span}
\newcommand{\fr}{\mathfrak}
\newcommand{\al}{\alpha}
\newcommand{\be}{\beta}
\newtheorem{theorem}{Theorem}
\newtheorem{lemma}{Lemma}
\newtheorem{remark}{Remark}
\newtheorem{prop}{Proposition}
\newtheorem{definition}{Definition} 
\newtheorem{example}{Example}
\begin{document}

\title
{Invariant Einstein metrics  on  flag manifolds with four isotropy summands}

\author{Andreas Arvanitoyeorgos and Ioannis Chrysikos}
\address{University of Patras, Department of Mathematics, GR-26500 Rion, Greece}
\email{arvanito@math.upatras.gr}
\email{xrysikos@master.math.upatras.gr}
 
\medskip
\noindent
\thanks{The authors  were partially supported   
  by the C.~Carath\'{e}odory grant \#C.161 2007-10, 
  University of Patras.}

\begin{abstract}
A  generalized flag manifold  is a 
homogeneous space  of the form $G/K$, 
where $K$ is the centralizer of a torus 
in a compact connected semisimple Lie group $G$. 
We classify all flag manifolds 
with four isotropy summands by the use of $\fr{t}$-roots.  
We present new  $G$-invariant Einstein metrics 
 by solving explicity the Einstein equation.  We also examine the isometric problem 
 for these Einstein metrics.

 \medskip
\noindent 2000 {\it Mathematics Subject Classification.} Primary 53C25; Secondary 53C30.

\medskip
\noindent {\it Keywords}:  homogeneous  manifold, Einstein metric,  generalized flag manifold, isotropy representation, $\fr{t}$-roots.

\end{abstract}

\maketitle

\section*{Introduction}
\markboth{Andreas Arvanitoyeorgos and Ioannis Chrysikos}{Invariant Einstein metrics  on  
flag manifolds with four isotropy summands}

A Riemannian manifold  $(M,g)$   is called Einstein if the Ricci tensor
 $\Ric_{g}$ is a constant multiple of the metric $g$, i.e.  
 ${\rm  Ric}_{g}=e\cdot g $, 
  for some $e\in\mathbb{R}$.   If $M$ is compact, 
  Einstein metrics of   volume 1 
  can be characterized variationally  as the critical 
  points of the scalar curvature functional 
    $T(g)=\int_{M}S_{g}d {\rm vol}_{g}$ 
  on the space  $\mathcal{M}_{1}$ of Riemannian metrics of 
  volume 1.  At present no general existence results 
  of Einstein metrics are known, 
  except of some important classes of Einstein metrics, 
  such as K\"ahler-Einstein metrics (\cite{Ti}), 
  Sasakian-Einstein metrics (\cite{BGa}), homogeneous Einstein metrics 
  (see \cite{BWZ} for the compact case and \cite{Heb} for the noncompact case).
  For more details on Einstein manifolds we refer to \cite{Be}.

Important progress has been made in the homogeneous case, 
that is  when a Lie group $G$ acts 
transitively on the given manifold  $M$.  
In this case we are interested  in $G$-invariant
Einstein metrics on $M$.  Such a metric is precisely a critical 
point  of $T$ restricted to
$\mathcal{M}_{1}^{G}$, the set  of $G$-invariant metrics of volume 1 (cf. \cite[p.~120]{Be}).
As a consequence, the Einstein equation reduces to a system 
of non-linear algebraic equations, 
which in some  cases can be solved explicity.  
Most known examples of Einstein manifolds are
homogeneous. For example, all compact and simply 
connected homogeneous manifolds of dimension less or equal to 11, 
admit a  homogeneous Einstein metric (\cite{Bom}).
 However, we are far from a classification of homogeneous Einstein manifolds. 
For more recent results on homogeneous Einstein metrics see the 
 syrveys \cite{LW}  and \cite{NRS}.

    An important class of compact homogeneous spaces  consists of the
    {\it generalized flag manifolds}.
      These are   adjoint orbits of a compact connected semisimple Lie group $G$, and 
     are homogeneous spaces of the form  $M=G/ C(S)$,  where $C(S)$ is the centralizer 
   of a torus $S$ in $G$.    If $S$ is a maximal torus in $G$, then $C(S)=S$ and 
   $M=G/S$  is called a {\it full flag manifold}. 
   Excellent references for the structure and the geometry of flag manifolds  are 
    \cite[Chapter 8]{Be} and the articles  \cite{AP}, \cite{B}, and \cite{Bor}.  
   
  Generalized flag manifolds
   have important applications in the physics of  elementary particles,
   where they give rise to a broad class  of supersymmetric sigma models.  
   Their importance arises from their rich complex geometry,
    since they  exhaust  the compact  
simply connected  homogeneous K\"ahler manifolds.
A flag manifold $M=G/K$ admits  a finite number 
of $G$-invariant complex structures and the first Chern class is positive.
In particular, for any $G$-invariant complex structure on $M$, 
there is a compatible K\"ahler-Einstein metric (cf. \cite{AP}, \cite{Bor}).  
 Morever, one can construct
a holomorphic embedding of $M$ into a complex projective space,
and for this reason generalized flag manifolds are also referred to as 
{\it K\"ahler $C$-spaces}, a term that we will use as well.

 In this paper we present new invariant Einstein metrics 
for all flag manifolds for which the isotropy representation 
decomposes into four inequivalent irreducible submodules.   
The problem of finding new non-K\"ahler  Einstein metrics on   flag manifolds was 
initially stydied by D. V. Alekseevsky in \cite{Ale1}.
For some of these spaces the normal  metric (i.e. the metric induced from the Killing form)
 is Einstein, as they  appear in the 
work  \cite{Wa1}  
of M. Wang and W. Ziller, where they classified all normal 
homogeneous Einstein manifolds. 
Such an  example is the flag manifold $SU(nk)/S(U(k)\times \cdots\times U(k))$ $(n \ \mbox{times}),$ $k\geq 2, \ n\geq 3$.  
In the last two decades  progress 
has been made by  Y. Sakane \cite{Sak}, M. Kimura \cite{Kim},   
J.C. Negreiros  \cite{DSN},  the authors \cite{Arv}, \cite{Chry}, and others. 
There are no general theorems about the existence or non existence of invariant (non-K\"ahler) Einstein metrics
on flag manifolds.   The only general result states  that if
$T$ is a maximal 
  torus in a compact semisimple connected Lie group whose local factors are of 
  type $A_{\ell}, D_{\ell}, E_{6}, E_{7},$ or $E_{8}$, 
then the 
  normal metric on $M=G/T$ is Einstein (\cite{Wa1}).

  Concerning the full flag 
  $\mathbb{F}_{n}=SU(n)/T^n$, it is  known (cf. \cite{Arv}) that  for $n\geq 4$, it
  admits at least $n!/2+n+1$ invariant Einstein metrics (the $n!/2$ metrics are K\"ahler-Einstein and one is the normal metric).  
  A few years later, Sakane (\cite{Sak}) proved 
  that the space $\mathbb{F}_{2m}$ ($m\geq 2$) admits another family of Einstein metrics. 
  Recently, Dos Santos and 
  Negreiros (\cite{DSN}) found a third class of homogeneous Einstein metrics on $\mathbb{F}_{n}$ 
  (for $n=2m$, or $n=2m+1$ and $m\geq 6$).  However, a complete classification 
  of the Einstein metrics on this full flag is  unknown and it is not 
  even known if the number of such metrics is finite (the B\"ohm-Wang-Ziller conjecture). 
  We denote the above three   different classes of invariant (non-K\"ahler and non-normal) Einstein metrics  
  on $\mathbb{F}_{n}$, by $\mathcal{E}_1, \mathcal{E}_2$ and $\mathcal{E}_3$. 
  
   In the following Table 1  we list, to the best of our knowledge, all   
known results about the number of invariant Einstein metrics  on generalized flag manifolds  $M=G/K$,
including our new ones.

  \medskip
  \begin{center}
   \Small{\begin{tabular}{|l|l|c|c|l|}
 \hline
  Generalized  flag manifold $M=G/K$ & $\fr{m}=\bigoplus_{i=1}^{s}\fr{m}_{i}$          & K-E & Normal   &   Number of Einstein metrics  \\
  \hline\hline
 $SO(2\ell +1)/U(\ell-m)\times SO(2\ell+1) \ \ (\ell-m\neq 1)$                 & $s=2$     & 1   &  -   &  $=2$   \cite{Ker}, \cite{Chry}\\
 $Sp(\ell)/U(\ell-m)\times Sp(m) \ \ (m\neq 0)$                                & $s=2$     & 1   &  -   &  $=2$   \cite{Ker}, \cite{Chry}\\
 $SO(2\ell)/U(\ell-m)\times SO(2m) \ \ (\ell-m\neq 1, \ m\neq 0)$                      &  $s=2$     & 1   &  -   &  $=2$  \cite{Ker}, \cite{Chry} \\ 
 $G_{2}/U(2) \ \  (U(2) \ \ \mbox{represented by the short root})$ &  $s=2$ & 1 & - & $=2$ \cite{Ker}, \cite{Chry} \\
 $ F_{4}/SO(7)\times U(1)$  & $s=2$ &1 &- & $=2$ \cite{Ker}, \cite{Chry} \\
 $F_{4}/Sp(3)\times U(1)$   & $s=2$ &1 &- & $=2$ \cite{Ker}, \cite{Chry}  \\ 
 $E_{6}/SU(6)\times U(1)$  & $s=2$ &1 &- & $=2$  \cite{Ker}, \cite{Chry}   \\
 $E_{6}/SU(2)\times SU(5)\times U(1)$ & $s=2$ &1 &- & $=2$  \cite{Ker}, \cite{Chry}   \\
 $E_{7}/SU(7)\times U(1)$ & $s=2$ &1 &- & $=2$  \cite{Ker}, \cite{Chry}   \\
 $E_{7}/ SU(2)\times SO(10)\times U(1)$ & $s=2$ &1 &- & $=2$ \cite{Ker}, \cite{Chry}   \\
 $E_{7}/ SO(12)\times U(1)$ & $s=2$ &1 &- & $=2$  \cite{Ker}, \cite{Chry}  \\
 $E_{8}/E_{7}\times U(1)$ & $s=2$  &1 &- & $=2$  \cite{Ker}, \cite{Chry}   \\
 $E_{8}/SO(14)\times U(1)$ & $s=2$ &1 &- & $=2$  \cite{Ker}, \cite{Chry}   \\
 \hline  
 $SU(\ell_{1}+\ell_{2}+\ell_{3})/S(U(\ell_{1})\times U(\ell_{2})\times U(\ell_{3}))$ &$s=3$  & 3   &  -   &  $=4$ \cite{Kim}, \cite{Arv}  \\ 
 $SU(3\ell)/S(U(\ell)\times U(\ell)\times U(\ell))$                           & $s=3$     & 3   & $\checkmark$  & $=4$  \cite{Kim}, \cite{Arv},  \cite{Wa1}\\
 $SO(2\ell)/U(1)\times U(\ell-1) \ \ (\ell\geq 4)$      & $s=3$  & 3 & - & $=4$   \cite{Kim}, \cite{LNF}\\ 
 $G_{2}/U(2) \ \ (U(2) \ \ \mbox{represented by the long root})$ & $s=3$ & 1 &- & $=3$ \cite{Kim},   \cite{Arv} \\
 $F_{4}/U(2)\times SU(3)$ & $s=3$ &1 &- & $=3$  \cite{Kim}\\
 $E_{6}/U(1)\times U(1)\times SO(8)$ & $s=3$ &3 &$\checkmark$ & $=4$  \cite{Kim}, \cite{Wa1}\\
 $E_{6}/U(2)\times SU(3)\times SU(3)$ & $s=3$ &1 &- &$=3$ \cite{Kim} \\
 $E_{7}/U(3)\times SU(5)$ & $s=3$ &1 &- &$=3$ \cite{Kim} \\ 
 $E_{7}/U(2)\times SU(6)$ & $s=3$ &1 &- &$=3$ \cite{Kim} \\ 
 $E_{8}/U(2)\times E_{6}$ & $s=3$ &1 &- &$=3$  \cite{Kim}\\
 $E_{8}/U(8) $ & $s=3$ &1 &- &$=3$ \cite{Kim} \\ 
 \hline  
 $(1) \ F_{4}/SU(3)\times SU(2)\times U(1)$             & $s=4$ & 1 & - &  $=3 \ \mbox{new}$     \\
 $(2) \ E_{7}/SU(4)\times SU(3)\times SU(2)\times U(1)$ & $s=4$ & 1 & - &  $=3 \ \mbox{new}$    \\
 $(3) \ E_{8}/SU(7)\times SU(2)\times U(1)$             & $s=4$ & 1 & - &  $=3 \ \mbox{new}$    \\  
 $(4) \ E_{8}/SO(10)\times SU(3)\times U(1)$            & $s=4$ & 1 & - &  $=5 \ \mbox{new}$    \\ 
 $(5) \ E_{6}/SU(5)\times U(1)\times U(1)$              & $s=4$ & 4 & - &  $=8 \ \mbox{new}$   \\
 $(6) \ E_{7}/SO(10)\times U(1)\times U(1)$             & $s=4$ & 4 & - &  $=8 \ \mbox{new}$   \\  
 $(7) \ SO(2\ell+1)/U(1)\times U(1)\times SO(2\ell-3) \ \ (\ell\geq 2)$        & $s=4$ & 4 & - &  $=8\ \mbox{new} \ \ (\ell\geq 3)$   \\
 $(8) \ SO(2\ell)/U(1)\times U(1)\times SO(2\ell-4) \ \ (\ell\geq 3)$        & $s=4$ & 4 & - &  $=8 \ \mbox{new}$   \\
 $(9) \ SO(2\ell)/U(p)\times U(\ell-p)   \ \ (\ell\geq 4, \ 2\leq p\leq \ell-2)$                      & $s=4$ & 4 & - &  $\geq 6 \ \mbox{new}$  \\
 $(10) \ Sp(\ell)/U(p)\times U(\ell-p) \ \ (\ell\geq 2, \ 1\leq p\leq \ell-1)$                        & $s=4$ & 4 & - &  $\geq 4 \ \mbox{new}$   \\
 $(11) \ SO(4p)/U(p)\times U(p) $       & $s=4$ & 4 & - &  $\geq 6 \ \mbox{new} \ \ (p\geq 2)$    \\
 $(12) \ Sp(2p)/U(p)\times U(p)$                         & $s=4$ & 4 & - &  $=6 \ \mbox{new} \ \ (p\geq 1)$     \\
   \hline
    \end{tabular}}
 \end{center}
  \medskip
 
 \begin{center} 
 {\sc Table 1.} \ The number of invariant Einstein metrics on generalized flag manifolds  
 \end{center}
 \medskip

The second column of Table 1 gives the number of irreducible, non equivalent 
components of the isotropy representation of $M=G/K$.  
The column   K-E  gives the number of the corresponding 
K\"ahler-Eistein metrics on $M$ (up to scalar).  
If $M$ is a normal homogeneous Einstein manifold  (according to  \cite{Wa1}) 
we make a note in the column ``Normal".   In the fifth column  
when we write ``$=m$"  we mean that there exist precisely $m$ $G$-invariant Einstein 
metrics on $M$ (up to isometry).  When we write ``$\geq m$" we mean that $M$ admits 
at least $m$ 	$G$-invariant metrics.    

In the following Table 2 we give the corresponding known results (we  don't include  the column  K-E) 
for  full flag manifolds.
 
  \medskip
  \begin{center}
 \Small{
\begin{tabular}{|l|l|c|l|}
   \hline
  Full flag manifold $M=G/K$ & $\fr{m}=\bigoplus_{i=1}^{s}\fr{m}_{i}$          & Normal   &  Number of Einstein metrics  \\
  \hline\hline
  $SU(3)/T$                & $s=3$       & $\checkmark$   & $=4$ \ \cite{Arv}, \cite{Sak}\\
 $SU(n)/T  \ (n\geq 4) $  &  $s=n(n-1)/2$  & $\checkmark$  &     $\geq n!/2+1+\mathcal{E}_1$ \cite{Arv}\\      
 $SU(2m)/T \ (m\geq 2)$  & $s=m(2m-1)$  & $\checkmark$ &  $\geq (2m)!/2+1+\mathcal{E}_1+\mathcal{E}_2$ \ \cite{Sak} \\ 
  $SU(2m)/T \ (m\geq 6)$  & $s=m(2m-1)$  & $\checkmark$ &  $\geq (2m)!/2+1+\mathcal{E}_1+\mathcal{E}_2+\mathcal{E}_3$ \ \cite{DSN} \\    
   $SU(2m+1)/T \ (m\geq 6)$  & $s=m(2m+1)$  & $\checkmark$ &  $\geq (2m+1)!/2+1+\mathcal{E}_1+\mathcal{E}_2+\mathcal{E}_3$ \  \\  
   $SO(5)/T $ & $s=4$ &  - & $\geq 6$ \ \cite{Sak} \\
   $SO(2n+1)/T \ (n\geq 12)$ & $s=n^2$  & - & $\geq 2$ \  \\
   $Sp(n)/T \ (n\geq 8)$ & $s=n^2$  & - & $\geq 2$ \  \\ 
   $SO(2n)/T \ (n\geq 8)$ & $s=n(n-1)$  & $\checkmark$ & $\geq 2$ \   \\ 
           \hline
 \end{tabular}}
 \end{center}
\medskip

    \begin{center}
 {\sc Table 2.} \ The number of invariant Einstein metrics on full flag manifolds  
\end{center}

   In a  recent work \cite{Gr}, M. Graev  used Newton polytopes of certain compact 
homogeneous spaces $M=G/K$ with simple spectrum, to give the number $\mathcal{E}(M)$ 
of isolated  $G$-invariant  {\it complex}  Einstein metrics on $M$.  
Among other results, he confirms some known results about $\mathcal{E}(M)$ on  flag manifolds.  
For example,  he proves that  the spaces $SU(3)/T$ and $E_{6}/T^{2}\times SO(8)$ 
admit four (real) homogeneous Einstein metrics (cf. \cite{Kim}, \cite{Arv}). 
He also gives new results for the families
$SO(N)/U(1)\times U(1)\times SO(N-4)$, $(N=2n,\ \mbox{or} \ N=2n+1)$,   $SO(4n)/U(n)\times U(n)$,  
$Sp(2n)/U(n)\times U(n)$ and others.  In particular, he proves that 
if $M$ is one of the above spaces, then 
$\mathcal{E}(M)=10$, 
and if $M$ is one of the flag manifolds $SO(2p+2q)/U(p)\times U(q)$ $(p>q\geq 2)$,  
$Sp(p+q)/U(p)\times U(q)$ $(p>q\geq 1)$, $E_{6}/T^2\times SU(5)$, or $E_7/T^2\times SO(10)$ then $\mathcal{E}(M)=12$ (\cite[pp.~1053]{Gr}).
However, as it will be shown in the present work,  the number of {\it real} invariant Einstein metrics on these flag manifolds  is smaller.
 
 \medskip
 
   The paper is organized as follows: 
   In Section 1 we recall some known facts about 
     reductive homogeneous spaces $M=G/K$, 
   and the Ricci curvature.  
   In  Section 2 we study the  structure 
   of  a generalized flag manifold  $M=G/K$ in terms of painted Dynkin diagrams.
   By use of the notion of $\fr{t}$-roots we 
   classify all flag manifolds
    with four isotropy summands.  
    In particular, we prove the following theorem (see Propositions \ref{classification} and \ref{cor1}):
    
    \medskip
    { \sc{Theorem A.}}
    {\it Let $M=G/K$ be a generalized flag manifold whose isotropy 
    representation decomposes into four inequivalent real irreducible 
    $\ad(\fr{k})$-submodules.  Then $M$ is localy 
    isomorphic to one of the spaces (1)-(10) given in Table 1.} 
   
   \medskip 
     Note that spaces  (11) and (12) are special cases of (9) and (10) (by setting  $\ell=2p$), and that    
        the full flag manifold $SO(5)/T$  in Table 2 is obtained from (7) for $\ell=2$.  
   
    In order to simplify  our study,  we separate  
   flag manifolds with four isotropy summands into two classes, 
     those of {\it Type I} and these of {\it Type  II}, depending 
    on the number of simple black roots in the corresponding Dynkin 
    diagram.  In Section  3 we discuss $G$-invariant complex structures 
    and $G$-invariant K\"ahler-Einstein metrics
    on flag manifolds of these two types. It turns out that finding
    K\"ahler-Einstein metrics for flag manifolds with several isotropy summands is a demanding task.
    Furthermore, the knowledge of K\"ahler-Einstein metrics is of central use towards finding other
    invariant Einstein metrics in the following sections.  
   In Section 4 we use the twistor 
   fibration $G/K\to G/U$ of a flag manifold  over an irreducible symmetric space 
   of compact type,   in order to obtain an explicit form of
   the Einstein equation for flag manifolds of Type I, 
   and give solutions.  In Section 5  we investigate 
   homogeneous Einstein metrics for   flag manifolds of Type II.   
   As a result, we sharpen the number $\mathcal{E}(M)$ of $G$-invariant {\it complex} Einstein metrics
   found by M. Graev, to {\it real} solutions.
    Finally, in Section 6 
   we examine   the isometric problem for the Einstein metrics obtained in Sections 4 and 5.  
   Our main theorem is the following and refers to Table 1:
   
       \medskip
    { \sc{Theorem B.}}
   {\it Let $M=G/K$ be a generalized flag manifold with four isotropy summands. 
   \begin{enumerate}
   \item If $M$ is one of the spaces (1), (2), or (3), then $M$ admits exactly three non-isometric $G$-invariant 
   Einstein metrics. One  is a K\"ahler-Einstein metric, 
   and the other two are non-K\"ahler Einstein metrics (cf. Theorem \ref{The5}).    
    If $M$ is the space (4), then $M$ admits five non-isometric $G$-invariant  Einstein metrics. 
    One   is K\"ahler, and the other four are non-K\"ahler Einstein metrics (cf. Theorem \ref{The5}).
   \item If $M$ is either (5), (6),(7), or (8),  then $M$ admits exactly eight $G$-invariant 
   Einstein metrics.  There are two pairs of isometric  K\"ahler-Einstein metrics, and four  non-isometric, non-K\"ahler  Einstein metrics,
    (cf. Theorem \ref{The6},   \ref{The7} and   \ref{The77}). 
      \item If $M$ is the space (11), then $M$ admits at least six $G$-invariant   
       Einstein metrics.  There are two pairs of isometric K\"ahler-Einstein metrics,
    and   two  non-isometric and non-K\"ahler Einstein metrics (cf. Theorem  \ref{The8}).
   If $2\leq p\leq 6$ then $M$  admits precisely eight $G$-invariant    Einstein metrics. 
   The  new  non-K\"ahler  Einstein metrics are given  explicity in Theorem \ref{The8}.  
   If $M$ is the space (12) then  $M$ admits exactly six $G$-invariant Einstein metrics. 
   There are two pairs of isometric K\"ahler-Einstein metrics, and  two   non-K\"ahler   Einstein metrics. 
   These metrics are given explicity
   in Theorem \ref{The9}.
   \end{enumerate}}
    
    \medskip
    We also give the following existence theorem.
    
   \medskip
    { \sc{Theorem C.}} {\it Let $M=G/K=SO(2\ell)/U(p)\times U(\ell-p)$ with $\ell\geq 4$ and $2\leq p\leq \ell-2$ (space (9)). 
    Then $M$ admits at least two real
    (non-K\"ahler)   $G$-invariant Einstein metrics (cf. 	Theorem \ref{existence}). }

  \markboth{Andreas Arvanitoyeorgos and Ioannis Chrysikos}{Homogeneous Einstein metrics  on  flag manifolds with four isotropy summands}
\section{The Ricci tensor of reductive  homogeneous spaces}
\markboth{Andreas Arvanitoyeorgos and Ioannis Chrysikos}{Homogeneous Einstein metrics  on  flag manifolds with four isotropy summands}

A Riemannian manifold $(M, g)$ is called $G$-homogeneous if there exists
a closed subgroup $G$ of ${\rm Isom}(M, g)$ which acts transitively on $M$.  
Let 
$K=\{g\in G : gp=p\}$ be the isotropy subgroup 
at $p$.  Then  $M\cong G/K$.  Note that $K$ is compact since $K\subset O(T_{p}M)$, 
where $T_{p}M$ is the tangent space of $M$ at $p$.   

Let $M=G/K$ be a homogeneous manifold with $G$  a 
compact, connected and semisimple Lie group and $K$ a closed 
subgroup. Let   $o=eK$  be the identity coset of $G/K$. 
We denote by $\fr{g}$ and $\fr{k}$ the corresponding Lie 
algrebras of $G$ and $K$. Let $B$ denote the  
Killing form of $\fr{g}$.  Recall that $-B$ is a positive 
definite inner product on $\fr{g}$, and we consider the orthogonal 
decomposition $\fr{g}=\fr{k}\oplus\fr{m}$ with respect to $-B$.
This is a reductive decomposition 
of $\fr{g}$, i.e.  $\Ad(K)\fr{m}\subset\fr{m}$, and the tangent 
space $T_{o}M$ is identified with $\fr{m}$.  The last equation 
implies the relation $[\fr{k}, \fr{m}]\subset\fr{m}$, and the 
converse is true if $K$ is connected.     Let  $\chi : K\to \Aut(T_{o}M)$ 
be the isotropy representation of $K$ on $T_{o}M$. 
Then $\Ad^{G}\big|_{K}=\Ad^{K}\oplus\chi$, where  $\Ad^{G}$ and 
$\Ad^{K}$ are the adjoint representations of $G$ and $K$ respectively.  
It follows that $\chi$   is equivalent to the adjoint representation   
of $K$ restricted on $\fr{m}$, i.e.  $\chi(K) = \Ad^{K}\big|_{\fr{m}}$.  
Therefore, the set of  all $G$-invariant symmetric (covariant) 2-tensors on $G/K$  
can be identified with the set of all $\Ad(K)$-invariant symmetric bilinear forms on $\fr{m}$.  
In particular, the set of  $G$-invariant metrics on $G/K$ 
is identified with the set of $\Ad(K)$-invariant inner products  on $\fr{m}$.

Let $\fr{m}=\fr{m}_1\oplus\cdots\oplus\fr{m}_{s}$ be a $(-B)$-orthogonal 
$\Ad(K)$-invariant decomposition of $\fr{m}$ into  its irreducible 
$\Ad(K)$-modules $\fr{m}_{i}$  $(i=1, \ldots, s)$, and assume that 
$\fr{m}_{i}$ are mutually inequivalent  $\Ad(K)$-representations.  
Then the space of $G$-invariant symmetric (covariant) 2-tensors on 
$M=G/K$ is given by 
\begin{equation}\label{Ri}
\{y_1\cdot (-B)|_{\fr{m}_1}+\cdots+y_s\cdot (-B)|_{\fr{m}_s} : y_1, \ldots, y_s \in \mathbb{R}\},
\end{equation}
and the space of $G$-invariant Riemannian metrics on $M$ is given by 
\begin{equation}\label{Inva}
\{x_1\cdot (-B)|_{\fr{m}_1}+\cdots+x_s\cdot (-B)|_{\fr{m}_s} : x_1>0, \ldots, x_s>0\}.
\end{equation}
According to (\ref{Ri}), the Ricci tensor $\Ric_{g}$ of a $G$-invariant metric $g$ on $M$
  is given by
  \[
\Ric_{g} = r_1\cdot (-B)|_{\fr{m}_1}+\cdots+r_s\cdot (-B)|_{\fr{m}_s}. 
\]
Here $r_{1}, \ldots, r_{s}$  are the components of the Ricci 
tensor on each $\fr{m}_{i}$.

   We now recall the notation $[ijk]$  from \cite{Wa2}.   
   Let $\{X_{\al}\}$ be a $(-B)$-orthogonal basis adapted to the 
   decomposition of $\fr{m}$, that is  $X_{\al}\in \fr{m}_{i}$ for some $i$, 
   and $\al<\be$ if $i<j$ (with $X_{\al}\in \fr{m}_{i}$ and $X_{\be}\in\fr{m}_{j}$).  
   Set $A_{\al\be}^{\gamma}=B([X_{\al}, X_{\be}], X_{\gamma})$ so 
   that $[X_{\al}, X_{\be}]_{\fr{m}}=\sum_{\gamma}A_{\al\be}^{\gamma}X_{\gamma}$, 
   and   $[ijk]=\sum(A_{\al\be}^{\gamma})^{2}$, where the sum is taken over all 
   indices $\al, \be, \gamma$ with $X_{\al}\in \fr{m}_{i}, X_{\be}\in\fr{m}_{j}, X_{\gamma}\in\fr{m}_{k}$
   (where $[ \ , \ ]_{\fr{m}}$ denotes the $\fr{m}$-component).  Then $[ijk]$ is nonnegative, symmetric in all 
   three entries, and independent of the $(-B)$-orthogonal bases choosen for 
   $\fr{m}_{i}, \fr{m}_{j}$ and $\fr{m}_{k}$ (but it depends on the choise 
   of the decomposition of $\fr{m}$).   
   
 \begin{prop}\label{Ricc}{\textnormal{(\cite{SP})}} 
Let $M=G/K$ be a reductive homogeneous space of a compact semisimple Lie group $G$ and let
 $\fr{m}=\bigoplus_{i=1}^{s}\fr{m}_{i}$ be a decomposition of $\fr{m}$ into mutually
 inequivalent irreducible $\Ad(K)$-submodules.  
Then the components $r_{1}, \ldots, r_{s}$ of the Ricci tensor of a $G$-invariant 
metric  (\ref{Inva}) on $M$ are given by
   \begin{equation}\label{ricc}
   r_{k}=\frac{1}{2x_{k}}+\frac{1}{4d_{k}}\sum_{i, j}\frac{x_{k}}{x_{i}x_{j}}[ijk]-\frac{1}{2d_{k}}\sum_{i, j}\frac{x_{j}}{x_{k}x_{i}}[kij], \qquad (k=1, \ldots, s).
 \end{equation}
\end{prop}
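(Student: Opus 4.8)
The plan is to derive the Ricci components of a left-invariant metric on $G/K$ by working with the Levi-Civita connection of the metric $g = \sum_k x_k (-B)|_{\fr{m}_k}$ and computing the curvature contractions in the adapted basis $\{X_\alpha\}$. First I would recall that for a $G$-invariant (hence homogeneous) metric, the Ricci tensor is itself $\Ad(K)$-invariant, so by Schur's lemma applied to the mutually inequivalent irreducible summands $\fr{m}_k$ it is necessarily of the diagonal form $\Ric_g = \sum_k r_k (-B)|_{\fr{m}_k}$; this reduces the problem to computing each scalar $r_k$ as a trace of the Ricci endomorphism restricted to $\fr{m}_k$, namely $r_k d_k = \sum_{X_\alpha \in \fr{m}_k} \Ric_g(X_\alpha, X_\alpha)$ where $d_k = \dim \fr{m}_k$.

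The main computational engine is the standard formula for the Levi-Civita connection of an invariant metric in terms of the bracket and the metric, together with the Koszul-type expression for the curvature. The key step is to express $\nabla_{X_\alpha} X_\beta$ using the constants $A_{\alpha\beta}^\gamma = B([X_\alpha,X_\beta],X_\gamma)$ and the metric weights $x_i$; because $G$ is compact semisimple and $-B$ is $\Ad$-invariant, these structure constants are totally antisymmetric in $\alpha,\beta,\gamma$, which is what ultimately makes the symmetric quantity $[ijk] = \sum (A_{\alpha\beta}^\gamma)^2$ well-defined and symmetric in all three indices. I would then substitute the connection into the sectional-curvature sum defining $\Ric_g(X_\alpha,X_\alpha)$, group the resulting terms according to which summands $\fr{m}_i,\fr{m}_j$ the intermediate brackets land in, and collect the squared structure constants into the bracket symbols $[ijk]$.

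The hard part will be the bookkeeping: organizing the many terms of the curvature sum so that the coefficients of $x_k/(x_i x_j)$ and $x_j/(x_k x_i)$ collapse precisely into the factors $\tfrac{1}{4d_k}[ijk]$ and $-\tfrac{1}{2d_k}[kij]$, while the purely $\fr{k}$-valued brackets contribute the leading term $\tfrac{1}{2x_k}$. This last term deserves care: the component of a bracket $[X_\alpha, X_\beta]$ lying in $\fr{k}$ (rather than in $\fr{m}$) contributes to the Ricci curvature but is \emph{not} captured by any $[ijk]$ with all indices in the $\fr{m}$-decomposition; using that $G/K$ is naturally reductive in the relevant sense and that $\sum_j [kkj]$-type identities follow from $X_k$ being a Casimir eigenvector, one shows these $\fr{k}$-contributions sum to the constant $\tfrac{1}{2x_k}$ independently of the $x_i$.

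Since the statement is quoted with attribution to \cite{SP}, the cleanest route is to invoke the general Ricci formula for naturally reductive (indeed arbitrary invariant) metrics established there and specialize it: the only things to verify are that our summands are mutually inequivalent (so the Ricci tensor is diagonal, as above) and that our normalization of $(-B)$ matches the conventions under which the coefficients $\tfrac12, \tfrac1{4d_k}, \tfrac1{2d_k}$ appear. Thus I would present the derivation in two stages, first establishing the diagonal form by Schur's lemma and then either reproducing the curvature computation or citing \cite{SP} for equation (\ref{ricc}) after checking the normalization, with the term-collection step flagged as the principal obstacle.
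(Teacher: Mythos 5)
The paper offers no proof of this proposition at all: it is quoted verbatim from \cite{SP} (Park--Sakane), so your fallback option of ``invoke the general Ricci formula established there and check normalizations'' is exactly what the authors do, and in that sense your proposal matches the paper. Your sketch of a direct derivation is also the standard one (Schur's lemma for the diagonal form, then tracing the curvature over a $(-B)$-adapted basis and collecting squared structure constants into the symbols $[ijk]$), but one step is misdescribed: the leading term $\tfrac{1}{2x_k}$ does not arise from a natural-reductivity or Casimir-eigenvector argument applied to the $\fr{k}$-valued parts of the brackets --- indeed a generic diagonal metric $(x_1,\ldots,x_s)$ is \emph{not} naturally reductive. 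In the general Ricci formula for a $G$-invariant metric on a compact homogeneous space (with $G$ unimodular, so the mean-curvature term vanishes), the relevant contribution is the term $-\tfrac12 B(X,X)$, which for a $g$-unit vector $X\in\fr{m}_k$ equals $\tfrac{1}{2x_k}$ directly; the $\fr{k}$-components of the brackets are already absorbed into $B(X,X)=\tr(\ad X)^2$ and never need to be summed separately. With that correction your term-collection plan goes through and reproduces (\ref{ricc}).
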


\markboth{Andreas Arvanitoyeorgos and Ioannis Chrysikos}{Invariant Einstein metrics  on  flag manifolds with four isotropy summands}
\section{Generalized flag manifolds}
\markboth{Andreas Arvanitoyeorgos and Ioannis Chrysikos}{Invariant Einstein metrics  on  flag manifolds with four isotropy summands}

  In this section our main goal is to classify  generalized flag manifolds $M=G/K$ 
  of a compact connected  simple Lie group $G$ with four 
  irreducibles components.  First we recall the Lie-theoretic
  description of $M=G/K$.

\subsection{Description of flag manifolds in terms of painted Dynkin diagrams} 
Let   $\fr{g}$ be the  Lie algebra of $G$ and  $\fr{g}^{\mathbb{C}}$ 
be its complexification. We choose a maximal torus  $T$ in $G$, 
and let $\fr{h}$ be the Lie algebra of $T$.  The complexification   
$\fr{h}^{\mathbb{C}}$    is a Cartan subalgebra of $\fr{g}^{\mathbb{C}}$.  
We denote by $R\subset(\fr{h}^{\mathbb{C}})^*$ the root system of $\fr{g}^{\mathbb{C}}$ 
relative to $\fr{h}^{\mathbb{C}}$,  and we consider the root space decomposition
$\fr{g}^{\mathbb{C}}=\fr{h}^{\mathbb{C}}\oplus\sum_{\al\in R}\fr{g}_{\al}^{\mathbb{C}}$,
where  $\fr{g}_{\al}^{\mathbb{C}}=\mathbb{C}E_{\al}$ are  the 1-dimensional 
(complex) root spaces.  Since $\fr{g}^{\mathbb{C}}$ is semisimple, the Killing form $B$ of $\fr{g}^{\mathbb{C}}$
is non-degenerate, and we establish a natural 
isomorphism  between $\fr{h}^{\mathbb{C}}$ 
and the dual space $(\fr{h}^{\mathbb{C}})^{*}$ as follows:  For every $\al\in (\fr{h}^{\mathbb{C}})^{*}$ we define 
 $H_{\al}\in\fr{h}^{\mathbb{C}}$   
by the equation $B(H, H_{\al})=\al(H),$ \ for all $H\in\fr{h}^{\mathbb{C}}$.  
   
  Choose  a fundamental system 
$\Pi=\{\al_{1}, \ldots, \al_{\ell}\}$ \ $(\dim\fr{h}^{\mathbb{C}}=\ell)$  
of $R$ and let $\{\Lambda_1, \ldots, \Lambda_{\ell}\}$ 
be the fundamental weights of $\fr{g}^{\mathbb{C}}$ 
corresponding to $\Pi$, that is 
     $
     \displaystyle\frac{2(\Lambda_{i}, \al_{j})}{(\al_{j}, \al_{j})}=\delta_{ij},  \ (1\leq i, j\leq \ell).
     $
  We fix a lexicographic ordering on $(\fr{h}^{\mathbb{C}})^*$,  
and let $R^{+}$ be the set of positive roots with respect to $\Pi$.   
Choose a subset $\Pi_{K}$ of $\Pi$ and set $
\Pi_{M}=\Pi\backslash \Pi_{K}=\{\al_{i_{1}}, \ldots, \al_{i_{r}}\},  \ (1\leq i_{1}\leq \cdots\leq i_{r}\leq \ell).
$
Let
\begin{equation}\label{Eq: roots}
R_{K}=R\cap\left\langle\Pi_{K} \right\rangle, \quad R_{K}^{+}=R^{+}\cap\left\langle\Pi_{K}\right\rangle, \quad  R_{M}=R\backslash R_{K}, \quad R_{M}^{+}=R^{+}\backslash R_{K}^{+},
\end{equation}
where   $\left\langle\Pi_{K}\right\rangle$   
denotes the set of roots generated by $\Pi_{K}$.   
The  set $R_{M}$ is such that $R=R_{K}\sqcup R_{M}$ (disjoint union) and
is called the set of {\it complementary roots} of $M$.  The subalgebra  
 \begin{equation}\label{parab}
 \fr{p}=\fr{h}^{\mathbb{C}}\oplus\sum_{\al\in R_{K}}\fr{g}_{\al}^{\mathbb{C}}\oplus\sum_{\al\in R_{M}^{+}}\fr{g}^{\mathbb{C}}_{\al}
\end{equation}
is a parabolic subalgebra of $\fr{g}^{\mathbb{C}}$, since it contains the Borel subalgebra 
$\fr{b}=\fr{h}^{\mathbb{C}}\oplus\sum_{\al\in R^+}\fr{g}^{\mathbb{C}}_{\al}$.  
It is well known that there is a one-to-one correspondence between parabolic 
subalgebras of  $\fr{g}^{\mathbb{C}}$ and pairs  
$(\Pi, \Pi_{K})$  (e.g. \cite[p.~187]{GOV}, \cite{Ale1}).  

Let $G^{\mathbb{C}}$ be a simply connected complex simple Lie 
group whose Lie algebra is $\fr{g}^{\mathbb{C}}$, and let  $P$ be
the parabolic subgroup of $G^{\mathbb{C}}$ generated 
by $\fr{p}$. Then the complex  
homogeneous space $G^{\mathbb{C}}/P$  is   compact and 
simply connected, and   $G$ acts transitively on $G^{\mathbb{C}}/P$ (cf. \cite{Akh}).
The intersection $K=G\cap P$ is a connected and closed subgroup of $G$.  
The canonical embedding $G\to G^{\mathbb{C}}$ gives a diffeomorphism 
of the compact homogeneous space $M=G/K$ with $G^{\mathbb{C}}/P$,  
 and $M$ admits a $G$-invariant 
K\"ahler metric (cf. \cite{Brl}). The   homogeneous space $M$ 
is called {\it generalized flag manifold} 
(or  {\it K\"ahler $C$-space}).
            
 We choose a Weyl basis   $E_{\al}\in\fr{g}_{\al}^{\mathbb{C}}$ $(\al\in R)$ 
  of $\fr{g}^{\mathbb{C}}$ with $B(E_{\al}, E_{-\al})=-1$, $[E_{\al}, E_{-\al}]=-H_{\al}$, and
  \begin{equation}\label{str}
 [E_{\al}, E_{\be}]=
 \left\{
\begin{array}{ll}
  N_{\al, \be}E_{\al+\be}  & \mbox{if} \ \ \al, \be,\al+\be\in R \\
  0 & \mbox{if} \ \ \al, \be\in R,  \al+\be\notin R,
\end{array} \right.
\end{equation}
where the constants $N_{\al, \be}$ are such that $ N_{\al, \be}=N_{-\al, -\be}\in \mathbb{R}$ $(\al, \be, \al+\be\in R)$.
Then we obtain that (cf. \cite{Hel}) $\fr{g}=\fr{h}\oplus \sum_{\al\in R^{+}}(\mathbb{R}A_{\al}+\mathbb{R}B_{\al})$, 
where $A_{\al}=E_{\al}+E_{-\al}, B_{\al}=\sqrt{-1}(E_{\al}-E_{-\al}), \al\in R^{+}$.  
The intersection $\fr{k}=\fr{p}\cap\fr{g}\subset\fr{g}$    
is the Lie subalgebra corresponding to $K$, 
 and is given by $\fr{k}=\fr{h}\oplus\sum_{\al\in R_{K}^{+}}(\mathbb{R}A_{\al}+\mathbb{R}B_{\al})$.  
According to (\ref{parab}),  we easily obtain the direct 
decomposition $\fr{p}=\fr{k}^{\mathbb{C}}\oplus\fr{n}$, 
where $\fr{k}^{\mathbb{C}}=\fr{h}^{\mathbb{C}}\oplus\sum_{\al\in R_{K}}\fr{g}_{\al}^{\mathbb{C}}$ and $\fr{n}=\sum_{\al\in R_{M}^{+}}\fr{g}^{\mathbb{C}}_{\al}$. 
The complexification $\fr{k}^{\mathbb{C}}$ of $\fr{k}$ is   
a maximal reductive subalgebra of $\fr{g}^{\mathbb{C}}$,  
and $\fr{n}$ is the maximal nilpotent ideal in $\fr{p}$ (nilradical).  
Morever,  $\fr{k}^{\mathbb{C}}$ (as  a reductive 
complex  subalgebra)   admits the decomposition $\fr{k}^{\mathbb{C}}=
\fr{z}(\fr{k}^{\mathbb{C}})\oplus\fr{k}^{\mathbb{C}}_{s}$,  
where $\fr{k}^{\mathbb{C}}_{s}=[\fr{k}^{\mathbb{C}}, \fr{k}^{\mathbb{C}}]$ 
denotes the semisimple part of $\fr{k}^{\mathbb{C}}$, 
and $\fr{z}(\fr{k}^{\mathbb{C}})$ its center.

 With respect to the negative of the Killing form $B$ we consider 
 the reductive decomposition $\fr{g}=\fr{k}\oplus\fr{m}$ of $\fr{g}$ 
 with $[\fr{k}, \fr{m}]\subset\fr{m}$. Then,  according to (\ref{Eq: roots}) we obtain 
 that
\begin{equation}\label{submodule}
\fr{m}=\sum_{\al\in R_{M}^{+}}(\mathbb{R}A_{\al}+\mathbb{R}B_{\al}).
\end{equation}
We define a tensor $J_{o}$ on $\fr{m}\cong T_{o}M$ given by 
$J_{o}A_{\al}=B_{\al}$, $J_{o}B_{\al}=-A_{\al}$, $\al\in R_{M}^{+}$. 
In this way we provide $M=G/K$ with a $G$-invariant complex structure $J$, which coincides 
with the {\it canonical complex structure} induced from the complex homogeneous 
space $G^{\mathbb{C}}/P$. We can extend this complex structure  
on the complexification of $\fr{m}$,
given by  $\fr{m}^{\mathbb{C}}=\sum_{\al\in R_{M}}\fr{g}_{\al}^{\mathbb{C}}$. The set $\{E_{\al} : \al\in R_{M}\}$ forms a basis of $\fr{m}^{\mathbb{C}}$ (\cite{AP}). 
 Note   that since $[\fr{k}, \fr{m}]\subset\fr{m}$, the set $R_{M}$ is invariant under $R_{K}$ in the sense that if $\al\in R_{M}, \be\in R_{K},$ and $\al+\be\in R$, then  $\al+\be\in R_{M}$. 

An {\it invariant ordering} $R_{M}^{+}$ in $R_{M}$ is a choise of a subset $R_{M}^{+}$
such that 
 \[
\begin{tabular}{ll}
$(i)$ & $R=R_{K}\sqcup R_{M}^+\sqcup R_{M}^{-}, \ \ \mbox{where} \ \ R_{M}^{-}=\{-\al: \al\in R_{M}^{+}\},$\\
$(ii)$ & $\mbox{If} \ \ \al\in R_{K}\sqcup R_{M}^{+}, \ \be\in R_{M}^{+} \ \ 
\mbox{and} \ \al+\be\in R, \ \ \mbox{then} \ \al+\be\in R_{M}^{+}.$ 
\end{tabular}
\]
We say that $\al>\be$ if and only if $\al-\be\in R_{M}^{+}$.  
 Note that the choise of $R_{M}^{+}=R^{+}\backslash R_{K}^+$ determined by (\ref{Eq: roots}),   
 satisfies conditions $(i)$ and $(ii)$.  We will call this ordering {\it natural invariant ordering}.

 Generalized flag manifolds $M=G/K$ of a compact connected simple Lie group $G$
 can be classified by using the Dynkin diagram of $G$, as follows:  
   Let   $\Gamma=\Gamma(\Pi)$ be the Dynkin diagram of the set of simple roots  $\Pi$ for
   the root system $R$.  
    By painting black the nodes of $\Gamma$ corresponding to 
    the simple roots   of $\Pi_{M}$ we obtain the {\it painted Dynkin diagram}  
    of $M=G/K$.  The subdiagram of white nodes with the connecting lines between them 
    determines the semisimple part of the Lie algebra of $K$, and  each black 
    node  gives rise to one $\fr{u}(1)$-summand. Thus  the painted Dynkin diagram determines 
    the reductive decomposition 
    and the space $M$ completely.  Note that if we paint all nodes black, 
    that is $\Pi_{K}=\emptyset, \Pi=\Pi_{M}$, 
then we will obtain a full flag manifold.
    \begin{example}\textnormal{
     Let  $G=F_{4}$ and set  $\Pi_{M}=\{\al_4\}$. 
    This determines the painted Dynkin diagram $
 \begin{picture}(62,10)(85, 4)
\put(87,5){\circle{4}}
\put(87, 11.5){\makebox(0,0){$\al_1$}}
\put(89,5){\line(1,0){14}}
\put(105,5){\circle{4}}
\put(105, 11.5){\makebox(0,0){$\al_2$}}
\put(107, 6.1){\line(1,0){12.3}}
\put(107, 4.5){\line(1,0){12.3}}
\put(116, 3.2){\scriptsize $>$}
\put(123.5,5){\circle{4}}
\put(123.5, 11.5){\makebox(0,0){$\al_3$}}
\put(126,5){\line(1,0){16}}
\put(142,5){\circle*{4.4}}
\put(142, 11.5){\makebox(0,0){$\al_4$}}
\end{picture}.
$
 The set $\Pi_{K}=\{\al_1, \al_2, \al_3\}$ is a system of simple roots  for  $\fr{k}^{\mathbb{C}}_{s}=\fr{so}(7, \mathbb{C})$
    and thus $\fr{k}^{\mathbb{C}}=\fr{so}(7, \mathbb{C})\oplus \fr{u}(1)$, so  $M=G/K=F_{4}/SO(7)\times U(1)$.}  
  \end{example}
  
\begin{prop}  \label{Eq: DD}  \textnormal{(\cite{Ale1})}
Different painted Dynkin diagrams $\Gamma$ and $\Gamma_1$ 
(except for the case of $D_{\ell}=SO(2\ell)$) define isomorphic flag 
manifolds $G/K$ and $G/K'$, i.e. there is an automorphism $\phi\in\Aut(G)$
such that $\phi(K)=K'$, if the subdiagrams $\Gamma'$ and $\Gamma'_1$ of
white roots corresponding to $\Pi_K$ and $\Pi_{K}'$ are isomorphic.
\end{prop}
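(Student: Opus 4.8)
The plan is to reduce the statement about painted Dynkin diagrams to a statement about conjugacy of the subalgebras $\fr{k}$ under $\Aut(\fr{g})$, and then to a purely combinatorial fact about subsets of simple roots. First I would note that an isomorphism of the real flag manifolds $G/K$ and $G/K'$ of the asserted type is exactly an automorphism $\phi\in\Aut(G)$ with $\phi(K)=K'$, and, passing to Lie algebras, this amounts to an automorphism of $\fr{g}$ carrying $\fr{k}$ onto $\fr{k}'$; since $G$ is connected, every automorphism of $\fr{g}$ that I shall use is induced by an element of $\Aut(G)$. Recall from the preceding discussion that $\fr{k}^{\mathbb{C}}=\fr{z}(\fr{k}^{\mathbb{C}})\oplus\fr{k}^{\mathbb{C}}_{s}$, where the semisimple part $\fr{k}^{\mathbb{C}}_{s}$ is the regular subalgebra with root system $R_{K}=R\cap\langle\Pi_{K}\rangle$, and that $\fr{k}^{\mathbb{C}}=\fr{h}^{\mathbb{C}}\oplus\sum_{\al\in R_{K}}\fr{g}^{\mathbb{C}}_{\al}$ is recovered as the centralizer of its own center. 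Consequently it suffices to produce an automorphism of $\fr{g}$ that preserves the Cartan subalgebra $\fr{h}^{\mathbb{C}}$ and maps the root subsystem $R_{K}$ onto $R_{K'}$: such a map automatically sends $\fr{k}^{\mathbb{C}}$ onto $(\fr{k}')^{\mathbb{C}}$, and restricting to the compact real form yields the desired $\phi$.

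Next I would identify which automorphisms are available at the level of the root system. The automorphisms of $\fr{g}$ that preserve $\fr{h}^{\mathbb{C}}$ act on $R$ precisely through the group $W\rtimes\Aut(\Gamma)$: the inner ones realize the Weyl group $W=N(T)/T$ acting on $\fr{h}$, while the symmetries of the Dynkin diagram $\Gamma$ realize $\Aut(\Gamma)$ as outer automorphisms. Hence the whole problem reduces to the combinatorial claim that, for $\fr{g}$ simple and not of type $D_{\ell}$, any two subsets $\Pi_{K},\Pi_{K}'\subset\Pi$ inducing isomorphic subdiagrams --- isomorphic as Dynkin diagrams, that is, keeping the inherited edge multiplicities and hence the long/short data from $\Gamma$ --- satisfy $w(R_{K})=R_{K'}$ for some $w\in W\rtimes\Aut(\Gamma)$. (Retaining the long/short data is essential: in $G_{2}$ the short and long $A_{1}$'s are not $W$-conjugate, and indeed give the two distinct spaces $G_{2}/U(2)$ listed in Table~1.)

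The verification of this combinatorial claim is the main obstacle, and I would carry it out by inspecting the connected Dynkin diagrams $A_{\ell},B_{\ell},C_{\ell},E_{6},E_{7},E_{8},F_{4},G_{2}$ one at a time. The strategy in each case is to match the connected components of $\Pi_{K}$ with those of $\Pi_{K}'$, using the Weyl group to move a standardly placed component of a given type and given root length (the crucial bookkeeping in $B_{\ell},C_{\ell},F_{4},G_{2}$) to any $W$-equivalent position, and using a diagram automorphism to absorb the symmetric ambiguities --- the flip of $A_{\ell}$, the order-two symmetry of $E_{6}$, and the triality of $D_{4}$. The point of excluding $D_{\ell}$ is that there the induced-subdiagram data fails to pin down the $W\rtimes\Aut(\Gamma)$-orbit: two subsets of type $A_{p-1}\sqcup A_{\ell-p-1}$ can be positioned inequivalently with respect to the two fork nodes, producing the genuinely non-isomorphic spaces $SO(2\ell)/U(p)\times U(\ell-p)$. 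The delicate part of the argument is precisely isolating these $D_{\ell}$ phenomena while confirming that the $D_{4}$ triality and the $E_{6}$ symmetry create no further exceptions.
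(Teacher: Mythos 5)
The paper itself offers no proof of this proposition --- it is quoted from \cite{Ale1} --- so there is nothing internal to compare your argument against; I can only assess it on its merits. Your opening reduction is the correct one: after composing with an inner automorphism one may assume $\phi$ preserves $T$, so the existence of $\phi\in\Aut(G)$ with $\phi(K)=K'$ is equivalent to $R_K$ and $R_{K'}$ being conjugate under $\Aut(R)=W\rtimes\Aut(\Gamma)$, and your insistence that ``isomorphic subdiagrams'' be read as retaining the long/short data (forced by the two non-isomorphic spaces $G_2/U(2)$ in Table~1) is a genuine and necessary correction to the literal statement.

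The gap lies precisely in the step you defer as ``the main obstacle.'' The combinatorial claim you reduce to --- that outside type $D_\ell$ any two subsets of $\Pi$ with isomorphic marked subdiagrams are $W\rtimes\Aut(\Gamma)$-conjugate --- is false for $E_7$, so the announced case-by-case verification cannot be completed. Concretely, in the paper's labelling of $E_7$ take $\Pi_K=\{\al_1,\dots,\al_5\}$ and $\Pi_K'=\{\al_1,\al_2,\al_3,\al_4,\al_7\}$: both are chains of type $A_5$, and $E_7$ is simply laced so no length bookkeeping can separate them; yet the subsystem generated by the first has orthogonal complement of type $A_1$ (only $\pm(e_7-e_8)$ in the standard model), while the second is the $A_5$ factor of the maximal-rank subsystem $A_5+A_2$ and has orthogonal complement of type $A_2$. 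Since $\Aut(\Gamma(E_7))$ is trivial, $R_K$ and $R_{K'}$ are not conjugate, and the two flag manifolds $E_7/SU(6)\cdot T^2$ they define are not isomorphic (they do not even have the same isotropy decomposition). This matches the known classification of conjugacy classes of standard parabolic (Levi) subsystems: the irreducible types in which equal-type subsets of $\Pi$ can fail to be conjugate are exactly $D_n$ and $E_7$ (in $E_7$ the types $3A_1$, $A_3+A_1$ and $A_5$ each split into two classes). So a completed version of your argument would not prove the proposition as stated; it would show that $E_7$ must be excepted alongside $D_\ell$, or that the criterion must be reformulated as conjugacy of $\Pi_K$ and $\Pi_K'$ under $W\rtimes\Aut(\Gamma)$ --- a statement your framework does prove, but which is no longer a purely diagrammatic one.
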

 
By using Proposition \ref{Eq: DD}, it is possible to give a complete list of
all flag manifolds $G/K$, where $G$ is either a classical or an 
exceptional Lie group (up to isomorphism). 
For  the classification of generalized flag manifolds in terms of  
painted Dynkin diagrams,  we refer to \cite{AA} and \cite{Bor}.

 \subsection{$\fr{t}$-roots and irreducible submodules.}

  An important invariant of a generalized flag manifold $M=G/K$ 
  is the set $R_{\fr{t}}$ of $\fr{t}$-roots. The notion of $\fr{t}$-roots was first
  introduced by  J.~Siebenthal in \cite{Sie}.  In the present form they are due to D.~V.~Alekseevsky (\cite{AP}, \cite{Ale1}). 
  Their importance arises from the fact
  that the knowledge of  $R_{\fr{t}}$
   gives us   crucial information  about  the decomposition of  the isotropy representation of $M$.

   For convenience, we fix  a system of simple roots $\Pi=\{\al_1,\ldots, \al_r, \phi_1, \ldots, \phi_k\}$   
   of $R$,  so that $\Pi_{K}=\{\phi_1, \ldots, \phi_k\}$ is a basis of the root system 
   $R_{K}$ and $\Pi_{M}=\Pi\backslash \Pi_{K}=\{\al_{1}, \ldots, \al_{r}\}$ $(r+k=\ell)$.  
   We consider the decomposition $R=R_{K}\sqcup R_{M}$,   and
    we define the set 
  \[ \fr{t} =\fr{z}(\fr{k}^{\mathbb{C}}) \cap i\fr{h}=\{X\in\fr{h} : \phi(X)=0, \ \mbox{for all} \ \phi\in R_{K}\},\]
where     $\fr{h}$ is the real $\ad$-diagonal  subalgebra  $\fr{h} = \fr{h}^{\mathbb{C}}\cap i\fr{k}$ (\cite{Ale1}, \cite{Arv}).  
The space $\fr{t}$ is a real form of the center $\fr{z}(\fr{k}^{\mathbb{C}})$, 
and thus $\fr{k}^{\mathbb{C}} =  \fr{t}^{\mathbb{C}}\oplus \fr{k}^{\mathbb{C}}_{s}$.  
The fundamental weights  $\Lambda_{1}, \ldots, \Lambda_{r}$  corresponding to the 
simple roots of $\Pi_{M}$ form a basis of the space $\fr{t}^{*}$ 
(isomorphic to $\fr{t}$ as vector space via the Killing form), 
thus  $\dim\fr{t}^*=\dim \fr{t}=r$.


Let $\fr{h}^*=\Span_{\mathbb{R}}\{\al :\al\in R\}$ and $\fr{t}^*$ 
be the dual spaces of $\fr{h}$ and $\fr{t}$ respectively.
Consider  the linear restriction map  $\kappa : \fr{h}^{*}\to \fr{t}^{*}$ defined by $\kappa(\al)=\al|_{\fr{t}}$,
and set  $R_{\fr{t}} = \kappa(R)=\kappa(R_{M})$.  Note that $\kappa(R_K)=0$ and   $\kappa(0)=0$.
\begin{definition}
The elements of $R_{\fr{t}}$ are called {\it $\fr{t}$-roots}.
\end{definition}

 Although  the set $R_{\fr{t}}$ is not
 a root system, it is possible to generalize some known
 notions of  root systems theory.  An element $Y\in\fr{t}$ 
 is called {\it regular} 
 if any $\fr{t}$-root $\kappa(\al) =\xi$ $(\al\in R_{M})$ 
 has non zero value on $Y$, i.e. $\xi(Y)\neq 0$.  
 A regular element defines an ordering
  in $\fr{t}^*$. This means that we obtain a polarization on $R_{\fr{t}}$, 
  that is $R_{\fr{t}}=R_{\fr{t}}^{+}\sqcup  R_{\fr{t}}^{-}$, 
  where $R_{\fr{t}}^{+}=\{\xi\in R_{\fr{t}} : \xi(Y)>0\}$
  and $R_{\fr{t}}^{-}=\{\xi\in R_{\fr{t}} : \xi(Y)<0\}$.  
  The $\fr{t}$-toots $\xi\in R_{\fr{t}}^{+}$ (resp. $\xi\in R_{\fr{t}}^{-}$) will be called {\it positive}
  (resp. {\it negative}). Since $R_{\fr{t}}=\kappa(R_{M})$ it follows that
   $R_{\fr{t}}^{+}=\kappa(R_{M}^{+})$.  Note that a regular element $Y\in\fr{t}$  
   does not lie on any of the hyperplanes orthogonal to $\fr{t}$-roots,
  that is $ Y\in \fr{t} \ \backslash \bigcup _{\xi\in R_{\fr{t}}} \fr{t}_{\xi}$,  where $\fr{t}_{\xi}=\{X\in\fr{t} : \xi(X)=0\}$.
  We will denote by $\fr{t}_{\mbox{reg}}$ the open dense subset 
 $\fr{t} \ \backslash \bigcup _{\xi\in R_{\fr{t}}} \fr{t}_{\xi}$ 
 of all regular elements in $\fr{t}$.
   A connected component $C$ of the set 
  $\fr{t}_{\mbox{reg}}$ 
  is called a {\it $\fr{t}$-chamber}, generalizing the known   Weyl chambers.
 The hyperplanes  $\fr{t}_{\xi}$  are called  the {\it walls} in $\fr{t}$.
  Clearly, in order to specify a  $\fr{t}$-chamber, it suffices to specify 
  on which side of a hyperplane $\fr{t}_{\xi}$ the $\fr{t}$-chamber lies, 
  for every hyperplane $\fr{t}_{\xi}$. 
  Thus, a $\fr{t}$-chamber is defined by a 
  system   of  inequalities of the form $\pm \xi(X)>0$. The number of these inequalities must be finite, since there
  is one inequality for each positive $\fr{t}$-root.
  
  The above description shows that any $\fr{t}$-chamber $C$ defines an ordering in $R_{\fr{t}}$.
  Conversely, given a  polarization  $R_{\fr{t}}=R_{\fr{t}}^{+}\sqcup  R_{\fr{t}}^{-}$  
  we can define the corresponding
  {\it positive $\fr{t}$-chamber} $C_{+}$ by 
  \[
  C_{+}=\{W\in\fr{t} : \xi(W)>0 \ \mbox{for all} \ \xi\in R_{\fr{t}}^{+}\}.
  \]
  Thus we obtain a bijection between the set of all polarizations of $R_{\fr{t}}$ and the set of $\fr{t}$-chambers.
  In particular, one can show that invariant orderings $R_{M}^{+}$ in $R_{M}$,
  are in one-to-one correspondence with $\fr{t}$-chambers (\cite[p.~621]{Bor}). 
  Namely,  a complementary root $\al\in R_{M}$ is positive with respect some given invariant ordering, 
  if and only if it takes strictly positive values on the corresponding $\fr{t}$-chamber.

   A fundamental result about $\fr{t}$-roots  is the following:

 \begin{prop} \label{isotropy} 
  \textnormal{(\cite{Sie}, \cite{AP})} There exists a one-to-one corespondence 
 between $\fr{t}$-roots and complex irreducible $\ad(\fr{k}^{\mathbb{C}})$-submodules 
 $\fr{m}_{\xi}$ of $\fr{m}^{\mathbb{C}}$.  This correspondence is given by  
\[R_{\fr{t}}\ni\xi \ \ \leftrightarrow \ \ \fr{m}_{\xi} =\sum_{\al\in R_{M}: \kappa(\al)=\xi}\mathbb{C}E_{\al}.
\] 	
Thus $\fr{m}^{\mathbb{C}} = \sum_{\xi\in R_{\fr{t}}} \fr{m}_{\xi}.$  Moreover, these submodules are inequivalent as $\ad(\fr{k}^{\mathbb{C}})$-modules.		 		  
  \end{prop}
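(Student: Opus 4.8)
The plan is to establish the correspondence by exploiting the eigenspace structure of $\ad(\fr{t})$ acting on $\fr{m}^{\mathbb{C}}$, since $\fr{t}$ is precisely the part of $\fr{k}^{\mathbb{C}}$ that distinguishes the summands. First I would observe that for each $\al\in R_M$ and each $H\in\fr{t}\subset\fr{h}^{\mathbb{C}}$ we have $\ad(H)E_{\al}=\al(H)E_{\al}=\kappa(\al)(H)E_{\al}$, so $E_{\al}$ is a weight vector for $\fr{t}$ with weight $\kappa(\al)$. Hence the proposed spaces $\fr{m}_{\xi}=\sum_{\al\in R_M:\,\kappa(\al)=\xi}\mathbb{C}E_{\al}$ are exactly the $\ad(\fr{t})$-eigenspaces of $\fr{m}^{\mathbb{C}}$ with eigenvalue (weight) $\xi$. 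This immediately yields the decomposition $\fr{m}^{\mathbb{C}}=\sum_{\xi\in R_{\fr{t}}}\fr{m}_{\xi}$ as a direct sum, since distinct $\fr{t}$-roots give distinct weights, and it forces any two summands with $\xi\neq\xi'$ to be \emph{inequivalent} as $\ad(\fr{k}^{\mathbb{C}})$-modules: the center $\fr{t}^{\mathbb{C}}\subset\fr{k}^{\mathbb{C}}$ acts by the distinct scalars $\xi,\xi'$, so no $\ad(\fr{k}^{\mathbb{C}})$-isomorphism can intertwine them.

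Next I would verify that each $\fr{m}_{\xi}$ is actually $\ad(\fr{k}^{\mathbb{C}})$-invariant. Using the structure constants \eqref{str}, for $\phi\in R_K$ and $\al\in R_M$ with $\kappa(\al)=\xi$ we have $[E_{\phi},E_{\al}]=N_{\phi,\al}E_{\phi+\al}$ (or $0$), and when $\phi+\al\in R$ it lies in $R_M$ (by the $R_K$-invariance of $R_M$ noted after \eqref{submodule}); moreover $\kappa(\phi+\al)=\kappa(\phi)+\kappa(\al)=0+\xi=\xi$ since $\kappa(R_K)=0$. Thus $\ad(\fr{g}_{\phi}^{\mathbb{C}})\fr{m}_{\xi}\subset\fr{m}_{\xi}$, and together with the obvious $\ad(\fr{h}^{\mathbb{C}})$-invariance this gives invariance under all of $\fr{k}^{\mathbb{C}}=\fr{h}^{\mathbb{C}}\oplus\sum_{\phi\in R_K}\fr{g}_{\phi}^{\mathbb{C}}$.

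The main obstacle — and the heart of the proposition — is \emph{irreducibility} of each $\fr{m}_{\xi}$. The plan is to fix $\al_0\in R_M$ with $\kappa(\al_0)=\xi$ and show that the $\ad(\fr{k}^{\mathbb{C}})$-submodule generated by $E_{\al_0}$ is all of $\fr{m}_{\xi}$; equivalently, that the set $\{\al\in R_M:\kappa(\al)=\xi\}$ is a single orbit reachable from $\al_0$ by adding/subtracting roots of $R_K$. Given any other $\be\in R_M$ with $\kappa(\be)=\xi$, the difference $\be-\al_0$ restricts to $0$ on $\fr{t}$, so $\be-\al_0$ lies in the span of $\Pi_K$; I would then use standard $\fr{sl}(2)$-string arguments from the theory of the reductive subalgebra $\fr{k}^{\mathbb{C}}$ acting on $\fr{m}^{\mathbb{C}}$ to connect $\be$ to $\al_0$ through a chain of roots differing by elements of $R_K$, showing every weight vector $E_{\be}$ is obtained from $E_{\al_0}$ by repeated bracketing with root vectors $E_{\pm\phi}$, $\phi\in\Pi_K$. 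This is precisely the classical fact that $\fr{m}^{\mathbb{C}}$ decomposes into irreducibles indexed by $\fr{t}$-roots, originally due to Siebenthal and Alekseevsky; since the statement is cited to \cite{Sie} and \cite{AP}, I would invoke their argument for the connectedness of these strings rather than reprove it in full, and conclude that $\fr{m}_{\xi}$ is irreducible, completing the bijection.
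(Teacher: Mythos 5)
The paper gives no proof of Proposition~\ref{isotropy}: it is stated as a known result and attributed entirely to \cite{Sie} and \cite{AP}. Your outline therefore supplies strictly more than the paper does, and what you supply is correct. The identification of $\fr{m}_{\xi}$ as the $\ad(\fr{t})$-weight space of weight $\xi$ (using $\ad(H)E_{\al}=\al(H)E_{\al}=\kappa(\al)(H)E_{\al}$ for $H\in\fr{t}$) is right, and it cleanly yields both the direct-sum decomposition and the inequivalence, since $\fr{t}^{\mathbb{C}}=\fr{z}(\fr{k}^{\mathbb{C}})$ acts by the distinct central characters $\xi\neq\xi'$, so no nonzero intertwiner can exist. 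The invariance argument via $[E_{\phi},E_{\al}]=N_{\phi,\al}E_{\phi+\al}$ with $\kappa(\phi+\al)=\xi$ is also correct, once combined (as you do) with the observation that an invariant subspace is automatically a sum of the one-dimensional $\fr{h}^{\mathbb{C}}$-root spaces $\mathbb{C}E_{\al}$. The only substantive content you do not carry out is the irreducibility of each $\fr{m}_{\xi}$, i.e.\ the connectedness of $\{\al\in R_{M}:\kappa(\al)=\xi\}$ under $R_{K}$-root strings; you correctly identify this as the heart of the matter and defer it to \cite{Sie} and \cite{AP}, which is exactly the level of rigor the paper itself adopts for the whole proposition. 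If you wanted a fully self-contained proof, that connectedness claim is the one step requiring genuine work; everything else in your plan is complete as written.
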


  Since  the complex conjugation  
$\tau : \fr{g}^{\mathbb{C}}\to\fr{g}^{\mathbb{C}}, \ X+iY\mapsto X-iY \ (X, Y\in\fr{g})$ 
of $\fr{g}^{\mathbb{C}}$ with respect to  the compact 
real form $\fr{g}$ interchanges the root spaces, i.e.  
  $\tau(E_{\al})=E_{-\al}$ and $\tau(E_{-\al})=E_{\al}$, 
 a decomposition of the real $\ad(\fr{k})$-module $\fr{m}=(\fr{m}^{\mathbb{C}})^{\tau}$ 
 into  real  irreducible submodules is given by
\begin{equation}\label{tangent}
 \fr{m} = \sum_{\xi\in R_{\fr{t}}^{+}=\kappa(R_{M}^{+})}(\fr{m}_{\xi}\oplus  \fr{m}_{-\xi})^{\tau},
 \end{equation}
  where  $\fr{n}^{\tau}$   denotes the set of fixed points of the 
    complex conjugation $\tau$ in  a vector subspace $\fr{n}\subset \fr{g}^{\mathbb{C}}$.    
If, for simplicity,  we  set  $R_{\fr{t}}^{+}=\{\xi_1, \ldots, \xi_s\}$, then according to (\ref{submodule})  each real irreducible $\ad(\fr{k})$-submodule $\fr{m}_{i}=(\fr{m}_{\xi_{i}}\oplus  \fr{m}_{-\xi_{i}})^{\tau}$ $(1\leq i\leq s)$
corresponding to the positive $\fr{t}$-root $\xi_i$,   is given by
\begin{equation}\label{bas}
\fr{m}_{i}=\sum_{\al\in R_{M}^+ \ :\ \kappa(\al)=\xi_{i}}(\mathbb{R}A_{\al}+\mathbb{R}B_{\al}).
\end{equation}

\begin{remark}\label{rem}
\textnormal{
An immediate consequence of (\ref{bas}) is that, the (real) dimension of each irreducible $\ad(\fr{k})$-submodule $\fr{m}_i$ 
which corresponds to the positive $\fr{t}$-root $\xi_i$, is equal to
the cardinality of the set $\{E_{\pm\al} : \kappa(\pm\al)=\pm\xi_{i}\}$.}
\end{remark}


\begin{definition}
A $\fr{t}$-root is called simple if is not a sum of two positive $\fr{t}$-roots.
\end{definition}

 The set $\Pi_{\fr{t}}$ of all simple $\fr{t}$-roots  is called a {\it $\fr{t}$-base} and   
  it is a basis of $\fr{t}^*$, in the sense that any $\fr{t}$-root can be written 
  as a linear combination of its elements with integer coefficients of the same sign.
 As we will see in the next proposition, a $\fr{t}$-base $\Pi_{\fr{t}}$ is obtained 
 by restricting the simple roots of $\Pi_{M}$ on $\fr{t}$.  We provide a proof of this fact, as it is
  not given in the literature.
  
\begin{prop} \label{simple}
Let $\Pi_{M}=\Pi\backslash \Pi_{K}=\{\al_{1}, \ldots, \al_{r}\}$.  
Then   the set $\{\overline{\al}_{i}=\al_{i}|_{\fr{t}} : \al_{i}\in \Pi_{M}\}$ 
is a $\fr{t}$-base of $\fr{t}^*$.
\end{prop}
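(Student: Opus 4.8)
The plan is to verify directly the two defining properties of a $\fr{t}$-base for the set $\{\overline{\al}_i=\al_i|_{\fr{t}} : \al_i\in\Pi_M\}$: that these restrictions form a basis of $\fr{t}^*$, and that they are exactly the simple $\fr{t}$-roots, with every $\fr{t}$-root a same-sign integer combination of them.

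First I would establish the basis property. The key observation is that, under the Killing-form identification $\fr{t}^*\cong\fr{t}$, the restriction map $\kappa$ is the orthogonal projection of $\fr{h}^*$ onto $\fr{t}^*$; indeed for $X\in\fr{t}$ one has $\al(X)=B(H_\al,X)=B(\mathrm{proj}_{\fr{t}}H_\al,X)$. Since each fundamental weight $\Lambda_j$ (with $\al_j\in\Pi_M$) satisfies $(\Lambda_j,\phi)=0$ for every $\phi\in\Pi_K$, it already lies in $\fr{t}^*$, and hence $(\overline{\al}_i,\Lambda_j)=(\al_i,\Lambda_j)=\delta_{ij}(\al_i,\al_i)/2$. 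Feeding a relation $\sum_i c_i\overline{\al}_i=0$ into the pairing with each $\Lambda_j$ forces $c_j=0$, so the $\overline{\al}_i$ are linearly independent; since there are $r=\dim\fr{t}^*$ of them, they form a basis of $\fr{t}^*$.

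Next, the same-sign integer combination property follows from the structure of $R$: writing any $\be\in R_M$ as $\be=\sum_i c_i\al_i+\sum_j d_j\phi_j$ with all coefficients integers of one sign and applying $\kappa$ (which kills every $\phi_j\in R_K$) gives $\kappa(\be)=\sum_i c_i\overline{\al}_i$ with the $c_i$ of one sign. In particular each $\overline{\al}_i=\kappa(\al_i)$ is a positive $\fr{t}$-root. To see that each $\overline{\al}_i$ is \emph{simple}, suppose $\overline{\al}_i=\xi+\eta$ with $\xi,\eta\in R_{\fr{t}}^+$; expanding $\xi,\eta$ over the $\overline{\al}_k$ with nonnegative integer coefficients and comparing with $\overline{\al}_i$ via the linear independence above forces one summand to vanish, which is impossible since $0\notin R_{\fr{t}}^+$ (no root of $R_M$ restricts to $0$, as this would place it in $R_K$).

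The remaining and most delicate point is the converse: every simple $\fr{t}$-root is one of the $\overline{\al}_i$, equivalently, any positive $\fr{t}$-root $\xi=\sum_i c_i\overline{\al}_i$ with $\sum_i c_i\geq 2$ decomposes as a sum of two positive $\fr{t}$-roots. Here I would lift $\xi$ to some $\be\in R_M^+$ whose $\Pi_M$-coefficients are exactly the $c_i$ (forced by the linear independence of the $\overline{\al}_i$), so that $\be$ has height $\geq 2$. Using the standard fact that a non-simple positive root admits a simple root $\al$ with $\be-\al\in R^+$, I would argue by descent on the height: whenever the available $\al$ lies in $\Pi_K$, the $R_K$-invariance of $R_M$ gives $\be-\al\in R_M^+$ with the same restriction $\kappa(\be-\al)=\xi$ but strictly smaller height, so we replace $\be$ and repeat; as the height stays $\geq 2$ yet strictly decreases, after finitely many steps we must instead be able to subtract a simple root $\al_p\in\Pi_M$. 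Then $\be-\al_p\in R_M^+$ (its $\Pi_M$-coefficient sum is still $\geq 1$, so it is not in $R_K$), and $\xi=\kappa(\be-\al_p)+\overline{\al}_p$ exhibits $\xi$ as a sum of two positive $\fr{t}$-roots; hence a simple $\fr{t}$-root must have coefficient sum $1$, i.e.\ equal some $\overline{\al}_p$. This termination-of-descent step is the main obstacle, since it is the only place genuinely exploiting the interplay between the ambient root system and the $\fr{t}$-structure, whereas everything else is bookkeeping with the projection $\kappa$.
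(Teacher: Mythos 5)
Your proof is correct, and it follows a genuinely different (and in places more complete) route than the paper's. The paper proves linear independence by observing that $\ker\kappa$ meets $R$ exactly in $R_K$ and that a linear combination of the simple roots in $\Pi_M$ can never lie in $R_K$; you instead pair against the fundamental weights $\Lambda_j$, which live in $\fr{t}^*$, and read off the coefficients from $(\overline{\al}_i,\Lambda_j)=\delta_{ij}(\al_j,\al_j)/2$ --- both work, yours being slightly more computational but self-contained. For simplicity of each $\overline{\al}_i$, the paper assumes $\overline{\al}_i=\xi+\zeta$, takes ``without loss of generality'' $\zeta=\overline{\al}_j$ and invokes the fact that a difference of simple roots is never a root; your coefficient-comparison argument (using the already-established linear independence and the fact that $0\notin R_{\fr{t}}^+$) is cleaner and avoids the unjustified reduction to $\zeta=\overline{\al}_j$. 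The most significant difference is your final step: the paper stops after showing that the $\overline{\al}_i$ are $r$ linearly independent simple $\fr{t}$-roots, and never verifies that they exhaust the simple $\fr{t}$-roots (equivalently, that every positive $\fr{t}$-root with $\Pi_M$-coefficient sum $\geq 2$ splits as a sum of two positive $\fr{t}$-roots), which is needed for the set to be a $\fr{t}$-base as the paper defines it. Your height-descent argument --- repeatedly subtracting available simple roots, using the $R_K$-invariance of $R_M$ to stay in $R_M^+$ when the subtracted root lies in $\Pi_K$, and using termination of the descent to force an available $\al_p\in\Pi_M$ --- correctly fills this gap. So your proof buys completeness at the cost of some extra bookkeeping, while the paper's is shorter but leaves the exhaustion of simple $\fr{t}$-roots implicit.
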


 \begin{proof} 
 It suffices to prove  that the set $\{\overline{\al}_{i}=\al_{i}|_{\fr{t}} :  \al_{i}\in \Pi_{M}\}$ 
 consists of $r=\dim\fr{t}^*$  linearly independent simple $\fr{t}$-roots (co-vectors in $\fr{t}^*$).
Note that the linear map $\kappa :\fr{h}^{*}\to\fr{t}^*$ is not   an injective map in general,
that is different complementary roots could be mapped to the same $\fr{t}$-root.
Indeed, 
\[
{\rm Ker}\ \kappa = \{\al\in\fr{h}^* : \kappa(\al)=0\} = R_{K}\cup\{0\}.
\]
 However,  it is $R_{K}=R\cap\left\langle\Pi_{K} \right\rangle$ and $\Pi_{M}=\Pi\backslash\Pi_{K}$, thus $\Pi_{M}\cap R_{K}=\emptyset$, 
 and  $\kappa$ always maps the  simple roots $\al_{i}\in\Pi_{M}$ $(i=1, \ldots, r)$ into different $\fr{t}$-roots $\overline{\al}_{i}=\al_{i}|_{\fr{t}}$,
 i.e.  $\overline{\al}_{i}\neq\overline{\al}_{j}$  for all $1\leq i\neq j\leq r$.   

  We set  $\Pi_{\fr{t}}=\{\overline{\al}_{1}, \ldots, \overline{\al}_{r}\}$, and   
 let $\mu_1, \ldots, \mu_r$ be  real numbers such that  $\mu_1\overline{\al}_1+\cdots+\mu_r\overline{\al}_r=0$.  By  the definition of $\overline{\al}_i$ and the linearity of $\kappa$ we obtain  that
 $\kappa(\mu_1{\al}_1+\cdots+\mu_r{\al}_r)=0$, which implies that $\mu_1{\al}_1+\cdots+\mu_r{\al}_r\in R_{K},$ 
 or $\mu_1{\al}_1+\cdots+\mu_r{\al}_r=0$.  
 But ${\al}_{1}, \ldots, {\al}_{r}$ belong to $\Pi_M$, and a linear combination of these simple roots
 can not be a root of $R_K$.  So $\mu_1{\al}_1+\cdots+\mu_r{\al}_r=0$ and since $\Pi_{M}\subset\Pi$, we conclude that $\mu_1=\mu_2=\cdots =\mu_r=0$.
  Therefore $\Pi_{\fr{t}}$ consists of  $r$ linear independent $\fr{t}$-roots.

  In order to prove that $\overline{\al}_{i}$ $(i=1, \ldots, r)$  are simple 
  $\fr{t}$-roots,  we need to show that every $\overline{\al}_{i}$ can  
  not be expressed as a sum of two positive $\fr{t}$-roots.   
  Assume on the contrary that $\overline{\al}_{i}$ is not  simple, so there exist
  $\xi, \zeta\in R^{+}_{\fr{t}}$ such that $\overline{\al}_{i}=\xi+\zeta$.  
  Without loss of generality  we may take  $\zeta=\overline{\al}_{j}$.   
  But then $\overline{\al}_{i}-\overline{\al}_{j}=\xi$, 
  or equivalently   $\kappa(\al_{i}-\al_{j})=\xi\in R_{\fr{t}}^{+}$.  
  But this is impossible, because $\al_{i}, \al_{j}$ are simple roots of  $R$, 
  and their difference is never a root of $\fr{g}^{\mathbb{C}}$ (cf. \cite[p.~458]{Hel}).
   \end{proof}
     
Proposition \ref{simple}  provides  us with a useful
 method to determine the $\fr{t}$-roots, as follows: 
 Fix a positive root $\al\in R^+$ and let $\al=\sum_{i=1}^{r}f_{i}\al_{i}+\sum_{j=1}^{k}g_{j}\phi_{j}$ 
 be  its expression in terms of simple roots, with respect to $\Pi$.  
 The  coefficients $f_{i}, g_{j}$ are such 
 that $0\leq f_{i}\leq m_{i}$ and $0\leq g_{j}\leq m_{j}$, where $m_i, m_j$ are determined by the {\it highest 
 root} $\tilde{a}=\sum_{i=1}^{\ell}m_{i}\al_{i}$ $( m_{i}\in\mathbb{Z}^+)$  in $R$.  
 Then  
 
  
  \begin{eqnarray}\label{troots}
  \kappa(\al) = \kappa(\sum_{i=1}^{r}f_{i}\al_{i}+\sum_{j=1}^{k}g_{j}\phi_{j}) = \kappa(\sum_{i=1}^{r}f_{i}\al_{i})  = \sum_{i=1}^{r}f_{i}\kappa(\al_{i})
  =f_{1}\overline{\al}_{1}+\cdots+f_{r}\overline{\al}_{r}.
  \end{eqnarray}
  
 By using the expressions of  the complementary roots in terms of simple roots,
  and applying formula (\ref{troots}), we can easily obtain 
 the set $R_{\fr{t}}$.
 This method was first applied in \cite{AA} 
 for certain flag manifolds of exceptional Lie groups.
     In the present work we will use it also for flag manifolds of classical Lie groups.

 \subsection{Classification of flag manifolds with four isotropy summands.} 
 The aim here is to classify all generalized flag manifolds $M=G/K$ 
 whose isotropy representation decomposes into four irreducible submodules. 
 Recall that the {\it height} of a simple root $\al_{i}$ is the positive integer  $m_{i}$ so that
 $\tilde{a}=\sum_{i=1}^{\ell}m_{i}\al_{i}$.
  We define the   function $ \Hgt : \Pi\to\mathbb{Z}$, $\Hgt(\al_{i})=m_{i}$. 
  In \cite{Chry2}  the authors classified all  
  generalized  flag manifolds $M=G/K$
   with two isotropy summands.  This was done by painting black in the 
   Dynkin diagram 
   $\Gamma(\Pi)$ of $G$ a simple root of height 2, that is $\Pi_{M}=\Pi\setminus\Pi_{K}=\{\al_{i} :\Hgt(\al_{i})=2\}$.  Also in  
    \cite{Kim} Kimura 
 obtained all flag manifolds with three isotropy summands, 
 by setting $\Pi_{M}= \{\al_{i} :  \Hgt(\al_{i})=3\}$, or  $\Pi_{M}=\{\al_{i}, \al_{j} :  \Hgt(\al_{i})=\Hgt(\al_{j})=1\}$.

  It will be shown that pairs $(\Pi, \Pi_{K})$ for generalized flag manifolds with four isotropy 
   summands, are divided into
  two different types as follows:
  \medskip
\begin{center}
 \begin{tabular}{r|l}
     Type  & $(\Pi, \Pi_{K})$\\
 \hline  
 I & $ \Pi\setminus\Pi_{K}=\{\al_{i} :  \Hgt(\al_{i})=4\}$ \\
 \hline 
  II & $\Pi\setminus\Pi_{K}=\{\al_{i}, \al_{j} : \Hgt(\al_{i})=1,  \Hgt(\al_{j})=2\}$\\ 
 \hline
  \end{tabular}
 \end{center}
 \medskip
 
 Pairs $(\Pi, \Pi_{K})$ of  Type I   always define a flag 
manifold with four isotropy summands.  However, this is not always 
true for pairs  $(\Pi, \Pi_{K})$ of  Type II.  These may define
flag manifolds with four or five isotropy summands.  This 
 depends on the form of the complementary roots.
 
 \begin{prop}\label{classification}
Let  $G$ be a compact and connected simple Lie group with Dynkin diagram $\Gamma=\Gamma(\Pi)$, 
where $\Pi$ is a system of simple roots of $G$. 
 Let $M=G/K$ be a generalized flag manifold corresponding to one of the
 pairs $(\Pi, \Pi_{K})$ of Type  I or Type II, presested in  Tables  3 and 4 respectively.  
  Then  the isotropy representation of $M$
  decomposes into four inequivalent irreducible $\ad(\fr{k})$-submodules.   
\end{prop}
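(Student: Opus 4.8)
The plan is to reduce the statement to counting positive $\fr{t}$-roots. By Proposition \ref{isotropy} and the real decomposition (\ref{tangent}), the real irreducible $\ad(\fr{k})$-submodules of $\fr{m}$ are in bijection with the positive $\fr{t}$-roots $R_{\fr{t}}^{+}=\kappa(R_{M}^{+})$, and since distinct positive $\fr{t}$-roots $\xi_{i}\neq\xi_{j}$ give complexified modules $\fr{m}_{\xi_i}\oplus\fr{m}_{-\xi_i}$ and $\fr{m}_{\xi_j}\oplus\fr{m}_{-\xi_j}$ with no common constituent, the corresponding real submodules are inequivalent. Hence it suffices to prove that $|R_{\fr{t}}^{+}|=4$ for every pair $(\Pi,\Pi_{K})$ listed in Tables 3 and 4. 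By formula (\ref{troots}), a complementary root $\al=\sum_{i=1}^{r}f_{i}\al_{i}+\sum_{j=1}^{k}g_{j}\phi_{j}\in R_{M}^{+}$ restricts to $\kappa(\al)=\sum_{i=1}^{r}f_{i}\overline{\al}_{i}$, so $\kappa(\al)$ depends only on the coefficients $f_{i}$ of the black simple roots, and by Proposition \ref{simple} the $\overline{\al}_{i}$ are linearly independent; thus two complementary roots restrict to the same $\fr{t}$-root if and only if they share the same tuple $(f_{1},\ldots,f_{r})$.

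For Type I there is a single black root $\al$ with $\Hgt(\al)=4$, so $r=1$ and every positive $\fr{t}$-root equals $c\,\overline{\al}$, where $c\in\{1,2,3,4\}$ is the coefficient of $\al$ in a positive complementary root. Each value $c=1,2,3,4$ is attained: $c=1$ by $\al$ itself and $c=4$ by the highest root $\tilde{a}$, while the intermediate values follow from the standard fact that, along a chain of positive roots ascending from $\al$ to $\tilde{a}$ by successive addition of simple roots, the coefficient of $\al$ increases in unit steps. Therefore $R_{\fr{t}}^{+}=\{\overline{\al},2\overline{\al},3\overline{\al},4\overline{\al}\}$ has exactly four elements, which settles the Type I case uniformly.

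For Type II there are two black roots $\al_{1},\al_{2}$ with $\Hgt(\al_{1})=1$ and $\Hgt(\al_{2})=2$, so every positive $\fr{t}$-root has the form $f_{1}\overline{\al}_{1}+f_{2}\overline{\al}_{2}$ with $f_{1}\in\{0,1\}$, $f_{2}\in\{0,1,2\}$ and $(f_{1},f_{2})\neq(0,0)$. This leaves at most the five tuples $(1,0),(0,1),(0,2),(1,1),(1,2)$, of which three always occur: $(1,0)$ via $\al_{1}$, $(0,1)$ via $\al_{2}$, and $(1,2)$ via $\tilde{a}$. Ascending from $\al_{2}$ (tuple $(0,1)$) to $\tilde{a}$ (tuple $(1,2)$) by adding simple roots one at a time, the first step that changes the tuple produces either $(1,1)$ or $(0,2)$; hence at least one of these occurs and there are always at least four summands. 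Consequently there are exactly four summands precisely when exactly one of the tuples $(0,2)$ and $(1,1)$ is realized, and five when both are. The proof is then completed by going through Table 4 and checking, for each entry, that precisely one of $(0,2)$ and $(1,1)$ is realized by a positive root.

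The main obstacle is this last case-by-case verification for Type II: for each group $G$ occurring in Table 4 --- the exceptional $F_{4},E_{6},E_{7},E_{8}$ and the classical families $SO(2\ell+1)$, $SO(2\ell)$, $Sp(\ell)$ --- one must inspect the explicit positive root system and decide whether a complementary root with $\al_{1}$-coefficient $1$ and $\al_{2}$-coefficient $1$ exists, and whether one with $\al_{1}$-coefficient $0$ and $\al_{2}$-coefficient $2$ exists. By contrast, the Type I count is uniform and requires no case analysis.
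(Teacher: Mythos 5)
Your reduction is exactly the paper's: by Proposition \ref{isotropy} and (\ref{tangent}) the number of real irreducible summands equals $|R_{\fr{t}}^{+}|$, and by (\ref{troots}) together with the linear independence of the $\overline{\al}_i$ from Proposition \ref{simple}, a positive complementary root contributes the $\fr{t}$-root determined by its tuple of coefficients on the black simple roots. Your Type I argument is complete (and in fact slightly more careful than the paper's, which simply asserts that all four multiples $c\overline{\al}_i$, $1\le c\le 4$, occur; your appeal to an ascending chain of roots from $\al_i$ to $\tilde{a}$ supplies the missing justification). Your Type II setup --- at most five tuples $(1,0),(0,1),(0,2),(1,1),(1,2)$, of which $(1,0),(0,1),(1,2)$ always occur and at least one of $(1,1),(0,2)$ occurs --- is also a clean and correct organization of what must be checked.

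The gap is that you stop precisely where the real content of the Type II case begins: you state that ``the proof is then completed by going through Table 4 and checking, for each entry, that precisely one of $(0,2)$ and $(1,1)$ is realized,'' but you do not perform this verification for any entry. This is not a formality that can be deferred: the paper's Proposition \ref{cor1} exhibits paintings of the very same combinatorial shape (one black root of height $1$, one of height $2$) that produce \emph{five} summands --- e.g.\ $\Pi_M=\{\al_1,\al_p\}$ with $3\le p\le\ell$ in $SO(2\ell+1)$, $\Pi_M=\{\al_1,\al_4\}$ in $E_6$, $\Pi_M=\{\al_1,\al_7\}$ in $E_7$ --- so whether one gets four or five summands genuinely depends on which roots are painted, not just on their heights. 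The paper discharges this by writing out $R_M^+$ explicitly and computing $\kappa$ root by root for $SO(2\ell+1)$ with $\Pi_M=\{\al_1,\al_2\}$ and for $E_6$ with $\Pi_M=\{\al_1,\al_2\}$ (showing in each case that $(1,1)$ occurs but $(0,2)$ does not), and asserts the remaining entries of Table 4 follow by the same computation. To complete your proof you would need to carry out the analogous check for each line of Table 4; as written, the Type II half of the proposition is a plan rather than a proof.
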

 
 \medskip
\begin{center}
  \small{
\begin{tabular}{|c|c|c|}
  \hline
 \begin{picture}(20,30)(0,0)
\put(10, 12){\makebox(0,0){$G$}}\end{picture}
 &  \begin{picture}(30,30)(0,0)\put(10, 12){
\makebox(0,0){$( \Pi, \Pi_K ) $ \ of \ Type \ I}}\end{picture}
& \begin{picture}(30,30)(0,0)\put(15, 12){
\makebox(0,0){$K$}}\end{picture}
   
\\
  \hline \hline
  

  \begin{picture}(15,30)(0,0)
\put(10, 13){\makebox(0,0){$F^{}_4$}}\end{picture}
& 

\begin{picture}(160,25)(38, -3)

\put(87,10){\circle{4 }}
\put(87,18){\makebox(0,0){$\al_1$}}
\put(87,2){\makebox(0,0){2}}
\put(89,10.5){\line(1,0){14}}
\put(105,10){\circle{4 }}
\put(105,18){\makebox(0,0){$\al_2$}}
\put(105,2){\makebox(0,0){3}}
\put(107, 11.3){\line(1,0){12.4}}
\put(107, 9.1){\line(1,0){12.6}}
\put(117.6, 8){\scriptsize $>$}
\put(124.5,10){\circle*{4 }}
\put(124.5, 18){\makebox(0,0){$\al_3$}}
\put(124.5, 2){\makebox(0,0){4}}
\put(126,10.5){\line(1,0){16}}
\put(144,10){\circle{4 }}
\put(144,18){\makebox(0,0){$\al_4$}}
\put(144,2){\makebox(0,0){2}}
\end{picture}
  & 
\begin{picture}(110,25)(0,2)\put(50, 10){
\makebox(10,10){$SU(3)\times SU(2)\times U(1)$}}\end{picture}
\\  \hline
\begin{picture}(15, 35)(0,0)
\put(10, 13){\makebox(0,0){$E^{}_7$}}\end{picture}

&
\begin{picture}(160,25)(-25, -10)

\put(15, 9.5){\circle{4 }}
\put(15, 18){\makebox(0,0){$\al_1$}}
\put(15,2){\makebox(0,0){1}}
\put(17, 10){\line(1,0){14}}
\put(33.5, 9.5){\circle{4 }}
\put(33.5, 18){\makebox(0,0){$\al_2$}}
\put(33.5,2){\makebox(0,0){2}}
\put(35, 10){\line(1,0){13.6}}
 \put(51, 9.5){\circle{4 }}
 \put(51, 18){\makebox(0,0){$\al_3$}}
\put(51,2){\makebox(0,0){3}}
\put(69,-6){\line(0,1){14}}
\put(53,10){\line(1,0){14}}
\put(69,9.5){\circle*{4 }}
\put(69.9, 18){\makebox(0,0){$\al_4$}}
\put(65,2){\makebox(0,0){4}}
\put(69,-8){\circle{4}}
\put(79, -10){\makebox(0,0){$\al_7$}}
\put(62,-10){\makebox(0,0){2}}
\put(71,10){\line(1,0){16}}
\put(89,9.5){\circle{4 }}
\put(89, 18){\makebox(0,0){$\al_5$}}
\put(89,2){\makebox(0,0){3}}
\put(90.7,10){\line(1,0){16}}
\put(109,9.5){\circle{4 }}
\put(109, 18){\makebox(0,0){$\al_6$}}
\put(109, 2){\makebox(0,0){2}}
\end{picture}
  & 
\begin{picture}(140,30)(0,0)\put(60, 10){
\makebox(10,10){$SU(4)\times SU(3)\times SU(2)\times U(1)$}}\end{picture}
 \\  \hline

 
\begin{picture}(15,35)(0,0)
\put(10, 13){\makebox(0,0){${E_{8}}_{(i)}$}}\end{picture}

&
\begin{picture}(160,25)(-5, -10)

\put(15, 9.5){\circle{4 }}
\put(15, 18){\makebox(0,0){$\al_1$}}
\put(15,2){\makebox(0,0){2}}
\put(17, 10){\line(1,0){14}}
\put(33.5, 9.5){\circle{4 }}
\put(33.5, 18){\makebox(0,0){$\al_2$}}
\put(33.5,2){\makebox(0,0){3}}
\put(35, 10){\line(1,0){13.6}}
 \put(51, 9.5){\circle*{4 }}
 \put(51, 18){\makebox(0,0){$\al_3$}}
\put(51,2){\makebox(0,0){4}}
\put(53,10){\line(1,0){14}}
\put(69,9.5){\circle{4 }}
\put(69, 18){\makebox(0,0){$\al_4$}}
\put(69,2){\makebox(0,0){5}}
\put(89,-8){\circle{4}}
\put(99, -9.5){\makebox(0,0){$\al_8$}}
\put(82,-9.5){\makebox(0,0){3}}
\put(89,-6){\line(0,1){14}}
\put(71,10){\line(1,0){16}}
\put(89,9.5){\circle{4 }}
\put(89, 18){\makebox(0,0){$\al_5$}}
\put(84,2){\makebox(0,0){6}}
\put(90.7,10){\line(1,0){16}}
\put(109,9.5){\circle{4 }}
\put(109, 18){\makebox(0,0){$\al_6$}}
\put(109,2){\makebox(0,0){4}}
\put(111,10){\line(1,0){16}}
\put(129,9.5){\circle{4 }}
\put(129, 18){\makebox(0,0){$\al_7$}}
\put(129,2){\makebox(0,0){2}}
\end{picture}
  & 
\begin{picture}(140,30)(0,0)\put(60, 10){
\makebox(10,10){$SO(10)\times SU(3)\times U(1)$}}\end{picture}
 \\  \hline


\begin{picture}(15,35)(0,0)
\put(10, 13){\makebox(0,0){${E_{8}}_{(ii)}$}}\end{picture}

&
\begin{picture}(160,25)(-5, -10)

\put(15, 9.5){\circle{4 }}
\put(15, 18){\makebox(0,0){$\al_1$}}
\put(15,2){\makebox(0,0){2}}
\put(17, 10){\line(1,0){14}}
\put(33.5, 9.5){\circle{4 }}
\put(33.5, 18){\makebox(0,0){$\al_2$}}
\put(33.5,2){\makebox(0,0){3}}
\put(35, 10){\line(1,0){13.6}}
 \put(51, 9.5){\circle{4 }}
 \put(51, 18){\makebox(0,0){$\al_3$}}
\put(51,2){\makebox(0,0){4}}
\put(53,10){\line(1,0){14}}
\put(69,9.5){\circle{4 }}
\put(69, 18){\makebox(0,0){$\al_4$}}
\put(69,2){\makebox(0,0){5}}
\put(89,-8){\circle{4}}
\put(99, -9.5){\makebox(0,0){$\al_8$}}
\put(82,-9.5){\makebox(0,0){3}}
\put(89,-6){\line(0,1){14}}
\put(71,10){\line(1,0){16}}
\put(89,9.5){\circle{4 }}
\put(89, 18){\makebox(0,0){$\al_5$}}
\put(84,2){\makebox(0,0){6}}
\put(90.7,10){\line(1,0){16}}
\put(109,9.5){\circle*{4 }}
\put(109, 18){\makebox(0,0){$\al_6$}}
\put(109,2){\makebox(0,0){4}}
\put(111,10){\line(1,0){16}}
\put(129,9.5){\circle{4 }}
\put(129, 18){\makebox(0,0){$\al_7$}}
\put(129,2){\makebox(0,0){2}}
\end{picture} 
 & 
\begin{picture}(140,30)(0,0)\put(60, 10){
\makebox(10,10){$SU(7)\times SU(2)\times U(1)$}}\end{picture}

  \\  \hline
    \end{tabular}}
 \end{center}
\smallskip

\begin{center}
 {\sc Table 3.} \ Pairs $(\Pi, \Pi_{K})$ of Type I.
\end{center}
  \begin{center}
  \small{
\begin{tabular}{|c|c|c|}
  \hline
 \begin{picture}(20,30)(0,0)
\put(10, 12){\makebox(0,0){$G$}}\end{picture}
 &  \begin{picture}(30,30)(0,0)\put(10, 12){
\makebox(0,0){$( \Pi, \Pi_K ) $ \ of \ Type \ II}}\end{picture}
& \begin{picture}(30,30)(0,0)\put(15, 12){
\makebox(0,0){$K$}}\end{picture}
   
\\
  \hline \hline

\begin{picture}(40,40)(0,5)
\put(20, 25){\makebox(0,0){$SO(2\ell+1)$}}\end{picture}
 &
\begin{picture}(160,40)(-15,-17)
\put(0, 0){\circle*{4}}
\put(0,8.5){\makebox(0,0){$\al_1$}}
\put(0,-8){\makebox(0,0){1}}
\put(2, 0.5){\line(1,0){14}}
\put(18, 0){\circle*{4}}
\put(18,8.5){\makebox(0,0){$\al_2$}}
\put(18,-8){\makebox(0,0){2}}
\put(20, 0.5){\line(1,0){13.5}}
\put(35.5, 0){\circle {4}}
\put(35.5,-8){\makebox(0,0){2}}
 \put(37.5, 0.5){\line(1,0){13}}
\put(58.5, 0){\makebox(0,0){$\ldots$}}
\put(65, 0.5){\line(1,0){13.5}}
 \put(80.6, 0){\circle{4}}

\put(80.6,-8){\makebox(0,0){2}}
   \put(82, 0.5){\line(1,0){14}}
\put(98, 0){\circle{4}}
\put(99,8.5){\makebox(0,0){$\al_{\ell-1}$}}
\put(98,-8){\makebox(0,0){2}}
\put(100, 1.3){\line(1,0){15.5}}
\put(100, -0.9){\line(1,0){15.5}}
\put(113.5, -1.9){\scriptsize $>$}
\put(121, 0){\circle{4}}
 \put(121.8,8.5){\makebox(0,0){$\al_\ell$}}
\put(121,-8){\makebox(0,0){2}}
\end{picture}
 & 
\begin{picture}(110,40)(0,3)
\put(52, 15){\makebox(8,15){$SO(2\ell-3)\times U(1)\times U(1)$}}
 \end{picture} 
  
 \\
  \hline  
  
\begin{picture}(15,45)(0,0)
\put(10, 25){\makebox(0,0){$Sp(\ell)$}}\end{picture}
 &

\begin{picture}(160,45)(-15,-25)
\put(0, 0){\circle{4}}
\put(0,8.5){\makebox(0,0){$\al_1$}}
\put(0,-8){\makebox(0,0){2}}
\put(2, 0.3){\line(1,0){14}}
\put(18, 0){\circle{4}}
\put(18,8.5){\makebox(0,0){$\al_2$}}
\put(18,-8){\makebox(0,0){2}}
\put(20, 0.3){\line(1,0){10}}
\put(40, 0){\makebox(0,0){$\ldots$}}
\put(50, 0.3){\line(1,0){10}}
\put(60, -20){\makebox(0,0){$( 1 \leq p \leq \ell - 1 )$}}
\put(60, 0){\circle*{4.4}}
\put(60,8.5){\makebox(0,0){$\al_p$}}
\put(60,-8){\makebox(0,0){2}}
\put(60, 0.3){\line(1,0){10}}
\put(80, 0){\makebox(0,0){$\ldots$}}
\put(90, 0.3){\line(1,0){10}}
\put(102, 0){\circle{4}}
\put(102,8.5){\makebox(0,0){$\al_{\ell-1}$}}
\put(102,-8){\makebox(0,0){2}}
\put(107.2, 1.2){\line(1,0){14.8}}
\put(107.2, -1){\line(1,0){14.8}}
\put(103.46, -2){\scriptsize $<$}
\put(123.5, 0){\circle*{4}}
\put(124,8.5){\makebox(0,0){$\al_\ell$}}
\put(123,-8){\makebox(0,0){1}}
\end{picture}
 &

\begin{picture}(110,45)(5,15)\put(50, 20){
\makebox(8,35){$U(p)\times U(\ell-p)$}}\end{picture} 
  
  \\
  \hline
  
\begin{picture}(15,45)(0,0)
\put(10, 25){\makebox(0,0){$SO(2\ell)_{(i)}$}}\end{picture}
 &
\begin{picture}(160,40)(-15,-23)
\put(0, 0){\circle*{4}}
\put(0,8.5){\makebox(0,0){$\al_1$}}
\put(0,-8){\makebox(0,0){1}}
\put(2, 0){\line(1,0){14}}
\put(18, 0){\circle*{4}}
\put(18,8.5){\makebox(0,0){$\al_2$}}
\put(18,-8){\makebox(0,0){2}}
\put(20, 0){\line(1,0){13.5}}
\put(35.5, 0){\circle {4}}
\put(35.5,-8){\makebox(0,0){2}}
 \put(37.5, 0){\line(1,0){12}}
\put(59.2, 0){\makebox(0,0){$\ldots$}}
\put(67, 0){\line(1,0){13}}
 \put(82, 0){\circle{4}}
 \put(82,-8){\makebox(0,0){2}}
\put(84, 0){\line(1,0){12}}
\put(100.7, 1){\line(2,1){10}}
\put(100.7, -1){\line(2,-1){10}}
\put(98, 0){\circle{4}}
\put(98,-8){\makebox(0,0){2}}
\put(96.5, 8.5){\makebox(0,0){$\al_{\ell -2}$}}
\put(112.5, -6){\circle{4}}
\put(120, 13){\makebox(0,0){$\al_{\ell-1}$}}
\put(120,5.5){\makebox(0,0){1}}
\put(111, -16){$\al_\ell$}
\put(120,-8){\makebox(0,0){1}}
\put(112.5, 6){\circle{4}}
\end{picture}
 &
\begin{picture}(110,45)(1,20)\put(50, 25){
\makebox(8,35){\shortstack{$SO(2(\ell-2))\times U(1)\times U(1)$}}}\end{picture}
  
   \\
  \hline
  
\begin{picture}(15,45)(0,0)
\put(10, 25){\makebox(0,0){$SO(2\ell)_{(ii)}$}}\end{picture}
 &
\begin{picture}(160,40)(-15,-23)
\put(0, 0){\circle{4}}
\put(0,8.5){\makebox(0,0){$\al_1$}}
\put(0,-8){\makebox(0,0){1}}
\put(2, 0){\line(1,0){14}}
\put(18, 0){\circle{4}}
\put(18,8.5){\makebox(0,0){$\al_2$}}
\put(18,-8){\makebox(0,0){2}}
\put(20, 0){\line(1,0){10}}
\put(40, 0){\makebox(0,0){$\ldots$}}
\put(50, 0){\line(1,0){10}}
\put(60, -19){\makebox(0,0){$( 2 \leq p \leq \ell -2 )$}}
\put(60, 0){\circle*{4.4}}
\put(60,8.5){\makebox(0,0){$\al_p$}}
\put(60,-8){\makebox(0,0){2}}
\put(60, 0){\line(1,0){10}}
\put(80, 0){\makebox(0,0){$\ldots$}}
\put(90, 0){\line(1,0){10}}
\put(102, 0){\circle{4}}
\put(102,-8){\makebox(0,0){2}}
\put(103.7, 1){\line(2,1){10}}
\put(103.7, -1){\line(2,-1){10}}
\put(115.5, 6){\circle{4}}
\put(115.5, -6){\circle*{4}}
\put(123.5, 14){\makebox(0,0){$\al_{\ell-1}$}}
\put(123.5,5.5){\makebox(0,0){1}}
\put(115.5, -16){$\al_\ell$}
\put(123.5,-8){\makebox(0,0){1}}
\end{picture}
 &  
\begin{picture}(110,45)(1,20)\put(50, 25){
\makebox(8,35){\shortstack{$U(p)\times U(\ell-p)$}}}\end{picture}
   
 \\  \hline
  
\begin{picture}(15,35)(0,0)
\put(10, 13){\makebox(0,0){$E^{}_6$}}\end{picture}

&

\begin{picture}(100,25)(-5, -10)

\put(15, 9.5){\circle*{4 }}
\put(15,17){\makebox(0,0){$\al_1$}}
\put(15,2){\makebox(0,0){1}}
\put(17, 10){\line(1,0){14}}
\put(33.5, 9.5){\circle*{4 }}
\put(33.5,17){\makebox(0,0){$\al_2$}}
\put(33.5,2){\makebox(0,0){2}}
\put(35, 10){\line(1,0){13.6}}
 \put(51, 9.5){\circle{4 }}
 \put(51,17){\makebox(0,0){$\al_3$}}
\put(47, 2){\makebox(0,0){3}}
\put(51,-6){\line(0,1){14}}
\put(53,10){\line(1,0){14}}
\put(69,9.5){\circle{4 }}
\put(69, 17){\makebox(0,0){$\al_4$}}
\put(69,2){\makebox(0,0){2}}
\put(51,-8){\circle{4}}
\put(60, -8.5){\makebox(0,0){$\al_6$}}
\put(45.5,-8.5){\makebox(0,0){2}}
\put(71,10){\line(1,0){16}}
\put(89,9.5){\circle{4 }}
\put(89, 17){\makebox(0,0){$\al_5$}}
\put(89,2){\makebox(0,0){1}}
\end{picture}
 
   & 
\begin{picture}(140,30)(0,0)\put(60, 10){
\makebox(10,10){$SU(5)\times U(1)\times U(1)$}}\end{picture}
 \\  \hline

\begin{picture}(15,35)(0,0)
\put(10, 13){\makebox(0,0){$E^{}_7$}}\end{picture}

&
\begin{picture}(160,25)(-15, -10)

\put(15, 9.5){\circle*{4 }}
\put(15, 18){\makebox(0,0){$\al_1$}}
\put(15,2){\makebox(0,0){1}}
\put(17, 10){\line(1,0){14}}
\put(33.5, 9.5){\circle*{4 }}
\put(33.5, 18){\makebox(0,0){$\al_2$}}
\put(33.5,2){\makebox(0,0){2}}
\put(35, 10){\line(1,0){13.6}}
 \put(51, 9.5){\circle{4 }}
 \put(51, 18){\makebox(0,0){$\al_3$}}
\put(51,2){\makebox(0,0){3}}
\put(69,-6){\line(0,1){14}}
\put(53,10){\line(1,0){14}}
\put(69,9.5){\circle{4 }}
\put(69.9, 18){\makebox(0,0){$\al_4$}}
\put(65,2){\makebox(0,0){4}}
\put(69,-8){\circle{4}}
\put(79, -10){\makebox(0,0){$\al_7$}}
\put(62,-10){\makebox(0,0){2}}
\put(71,10){\line(1,0){16}}
\put(89,9.5){\circle{4 }}
\put(89, 18){\makebox(0,0){$\al_5$}}
\put(89,2){\makebox(0,0){3}}
\put(90.7,10){\line(1,0){16}}
\put(109,9.5){\circle{4 }}
\put(109, 18){\makebox(0,0){$\al_6$}}
\put(109, 2){\makebox(0,0){2}}
\end{picture}
   & 
\begin{picture}(140,30)(0,0)\put(60, 10){
\makebox(10,10){$SO(10)\times  U(1)\times U(1)$}}\end{picture}
 \\  \hline
    \end{tabular}}
 \end{center}
\smallskip

\begin{center}
 {\sc Table 4.} \ Pairs $(\Pi, \Pi_{K})$ of Type II.  
\end{center}
\smallskip

\begin{proof}
The proof is based on Proposition \ref{isotropy} and the correspondence between $\fr{t}$-roots and irreducible submodules of $\fr{m}^{\mathbb{C}}$. 
 For the calculation of $\fr{t}$-roots we apply   
  relation (\ref{troots}).  
 Due to the decomposition (\ref{tangent}), it is sufficient to compute only 
 the positive $\fr{t}$-roots.   
 For the root systems  of the complex simple Lie algebras
 we use the notation from \cite{AA} (see also \cite{GOV}, \cite{Sam}).  
 For convenience, on the  painted Dynkin diagrams presented in Tables  3 and 4,
  we have assigned the heights of the simple roots  $\al_{i}$, 
  associated to the fixed base $\Pi$ any time.
 
 \medskip
{\bf Pairs $\bold{(\Pi, \Pi_{K})}$  of Type I.}  
Let $M=G/K$ be a generalized flag manifold 
defined by a set $\Pi_{M}=\{\al_{i} : \Hgt(\al_i)=4\}$. 
According to Proposition \ref{simple} it is 
$\Pi_{\fr{t}}=\{ \overline{\al}_{i}\}$,  where
  $\overline{\al}_{i}=\kappa(\al_{i})=\al_{i}|_{\fr{t}}$, 
 and   
$\fr{t}^*=\Span_{\mathbb{R}}\{\overline{\al}_{i}\}$. 
If $\al=\sum_{i=1}^{\ell}c_{i}\al_{i}\in R^{+}$  
with $0\leq c_{i}\leq m_{i}$ $(i=1,\ldots, \ell)$,
then relation (\ref{troots}) implies that 
$\kappa(\al)=c_{i}\overline{\al}_{i}$ ($0\leq c_{i} \leq 4$)
thus $R_{\fr{t}}^{+}=\{\overline{\al}_{i}, 2\overline{\al}_{i}, 3\overline{\al}_{i}, 4\overline{\al}_{i}\}$,
so $|R_{\fr{t}}^{+}| =4$.  Here
$|R_{\fr{t}}^{+}|$ denotes 
the cardinality of  the set $R_{\fr{t}}^{+}$. 
Therefore, any generalized   flag manifold 
which is defined by a pair $(\Pi, \Pi_{K})$
of Type I  has four isotropy summands, i.e. $\fr{m}=\fr{m}_1\oplus\fr{m}_2\oplus\fr{m}_3\oplus\fr{m}_4$.

The only complex simple Lie algebras 
for which the associated basis contains 
simple roots with height 4 are the exceptional 
Lie algebras $F_{4}, E_{7}$ and $E_{8}$ (cf. \cite{GOV}, \cite{Hel}).
For $F_4$ we use the basis  $\Pi=\{\al_1=e_2-e_3, \al_2=e_3-e_4, \al_3=e_4, \al_4=\frac{1}{2}(e_1-e_2-e_3-e_4)\}$ with  highest  root $\tilde{a}=2\al_1+3\al_2+4\al_3+2\al_4$. Thus $\Pi_{M}=\{\al_{3}\}$ so we obtain the flag manifold $F_4/SU(3)\times SU(2)\times U(1)$.  For $E_7$ we set $\Pi=\{e_i-e_{i+1} \ (i<7), \al_7=e_5+e_6+e_7+e_8\}$ with $\tilde{a}=\al_1+2\al_2+3\al_3+4\al_4+3\al_5+2\al_6+2\al_7$.  Thus $\Pi_{M}=\{\al_4\}$ which determines the flag $E_7/SU(4)\times SU(3)\times SU(2)\times U(1)$.  For the root system of $E_8$ a basis is given by
$\Pi=\{\al_{1}=e_{1}-e_{2}, \ldots, \al_{7}=e_{7}-e_{8}, \al_{8}=e_{6}+e_{7}+e_{8}\}$, and $\tilde{\al}=2\al_{1}+3\al_{2}+4\al_{3}+5\al_{4}+6\al_{5}+4\al_{6}+2\al_{7}+3\al_{8}$.  
There are two choises, $\Pi_{M}=\{\al_{3}\}$ or $\Pi_{M}=\{\al_6\}$, which  determine the flag manifolds $E_8/SO(10)\times SU(3)\times U(1)$ and $E_{8}/SU(7)\times SU(2)\times U(1)$ respectively.

\medskip
\medskip
{\bf Pairs $\bold{(\Pi, \Pi_{K})}$  of Type II.} \  
Let $M=G/K$ be  a generalized flag manifold  such that 
$\Pi_{M}=\{\al_i, \al_{j} : \Hgt(\al_{i})=1, \Hgt(\al_{j})=2\}$.  
A $\fr{t}$-base is given by $\Pi_{\fr{t}}=\{\overline{\al}_{i}, \overline{\al}_{j}\}$, 
so
$\fr{t}^*=\Span_{\mathbb{R}}\{\overline{\al}_{i}, \overline{\al}_{j}\}$.
For $\al\in R^+$, let 
$\al=\sum_{k=1}^{\ell}c_{k}\al_{k}$
 with 
$0\leq c_{k}\leq m_{k}$ $(k=1\ldots, \ell)$.  
Then (\ref{troots})  implies  that 
\begin{equation}\label{typeII}
\kappa(\al)=c_{i}\overline{\al}_{i}+c_j\overline{\al}_{j},
\end{equation}
where $0\leq c_{i} \leq m_{i}=1$, and $0\leq c_{j}\leq m_{j}=2$.  
Therefore, we obtain at most  five different positive $\fr{t}$-roots: 
$\overline{\al}_{i}, \ \overline{\al}_{j}, \ \overline{\al}_{i}+\overline{\al}_{j}, 
 \ \overline{\al}_{i}+2\overline{\al}_{j},$ and $2\overline{\al}_{j}$. 
 The appearence of the fifth  $\fr{t}$-root in the previous sequence
depends on the choise of $(\Pi, \Pi_{K})$. 
As we will see later on,  there are cases where the 
system $R_{\fr{t}}^+$ contains only  four elements.
  
Pairs $(\Pi, \Pi_{K})$ of type II exist only for the Lie groups 
  $SO(2\ell+1)$,  $Sp(\ell)$, $SO(2\ell)$, $E_{6}$,  and $E_{7}$.
In order to describe the  root systems $R, R_K$ for the corresponding 
flag manifolds of the classical groups we follow the method
of \cite{AA} (see also \cite{AP}).   We will only examine the cases of
 $B_{\ell}=SO(2\ell+1)$ and $E_6$,
and the remaining cases given in Table 4 are obtained by a similar procedure. 
We remark that the generalized flag manifolds  $SO(2\ell)/U(p)\times U(\ell-p)$ and $E_{7}/SO(10)\times U(1)\times U(1)$, 
 can   also be obtained by setting  $\Pi_{M}=\{\al_{p}, \al_{\ell-1}\}$  and 
 $\Pi_{M}=\{\al_1, \al_6\}$, respectively. For the case of $SO(2\ell)_{(ii)}$ see also \cite{Arv}.

\medskip
{\it Case of $SO(2\ell+1)$.}  
Let $\Pi_{M}=\{\al_{1}, \al_2\}$. 
This choice determines the generalized flag manifold 
$M=G/K=SO(2\ell+1)/U(1)\times U(1)\times SO(2(\ell-2)+1)
=SO(2\ell+1)/U(1)\times U(1)\times SO(2\ell-3)$.

Let $\{e^1_1, e^2_1, \pi_{j}\}$  $(j=1, \ldots, \ell-2),$
be an orthonormal  basis on $\mathbb{R}^{\ell}$. 
Then the root system $R$ of the complex simple Lie algebra 
$B_{\ell}=\fr{so}(2\ell+1, \mathbb{C})$ is described as follows:
\[ 
R=\{\pm e^1_1\pm e^2_1, \ \pm e_1^1\pm \pi_j, \ \pm e^2_1\pm \pi_j, \ \pm\pi_i\pm \pi_j, \ \pm e_1^1, \ \pm e_1^2, \ \pm \pi_{j} : i<j\},
\]
We fix a basis $
\Pi=\{\al_1=e_1^1-e_1^2, \ \al_2=e^2_1-\pi_1, \ \phi_1=\pi_1-\pi_2,  \ldots, \phi_{\ell-3}=\pi_{\ell-3}-\pi_{\ell-2}, \ \phi_{\ell-2}=\pi_{\ell-2}\},
$ and set $R^+=\{e^1_1\pm e^2_1, \ e_1^1\pm \pi_j, \ e^2_1\pm \pi_j, \ \pi_i\pm \pi_j, \  e_1^1, \  e_1^2, \  \pi_{j} : i<j\}$. 
The highest root is given by
$\tilde{\al}=e^1_1+e^2_1=\al_1+2\al_2+2\phi_1\cdots +2\phi_{\ell-2}$.
The root  system $R_{K}$ of the semisimple part 
$\fr{so}(2 \ell-3, \mathbb{C})$  of $\fr{k}^{\mathbb{C}}$,  
is given by  $R_{K}=\{\pm\pi_j, \ \pm\pi_{i}\pm\pi_j : i<j\}$,
while $\Pi_{K}=\{\phi_{1}, \ldots, \phi_{\ell-2}\}$ is a basis. 
Thus $R_{K}^{+}=\{\pi_j, \ \pi_{i}\pm\pi_j : i<j\}$, and 
$R_{M}^+=R^+\backslash R_K^{+}=\{ e^1_1\pm e^2_1, \  e_1^1\pm \pi_j, \   e^2_1\pm \pi_j, \   e_1^1, \  e_1^2 : i<j \}$.
Since $\Pi_{M}=\{\al_1, \al_2\}$, it is $\fr{t}^*=\Span_{\mathbb{R}}\{\overline{\al}_1, \overline{\al}_2\}$,
where $\overline{\al}_{i}=\kappa(\al_{i})$ $(i=1, 2)$.   
Relation  (\ref{typeII}) implies that  for any $\al\in R_{M}^+$ it is 
$\kappa(\al)=c_1\overline{\al}_{1}+c_2\overline{\al}_{2}$, 
with $0\leq c_{1}\leq 1$ and $0\leq c_2\leq 2$. 
By using the expression of each positive complementary root 
in terms of simple roots of $\Pi$,
the linearity of the restriction $\kappa$, 
and the fact that $\kappa(\Pi_{K})=0$, we obtain that
\begin{eqnarray*}
\kappa(e^1_1-e^2_1) &=& \kappa(\al_1)=\overline{\al}_1, \\
\kappa(e^2_1-\pi_1) &=& \kappa(\al_2)=\overline{\al}_2, \\
\kappa(e^1_1+e^2_1) &=& \kappa(\al_1+2\al_2+2\phi_1\cdots +2\phi_{\ell-2})=\kappa(\al_1+2\al_2)=\overline{\al}_1+2\overline{\al}_2, \\
\kappa(e^1_1-\pi_j) &=& \kappa(\al_1+\cdots + \phi_{j-1})=\kappa(\al_1+\al_2)=\overline{\al}_1+\overline{\al}_2 \\
\kappa(e^1_1+\pi_j) &=& \kappa(\al_1+\cdots+\phi_{j-1}+2\phi_{j}+2\phi_{j+1}+\cdots+2\phi_{\ell-2})=\kappa(\al_1+\al_2)=\overline{\al}_1+\overline{\al}_2,\\
\kappa(e^2_1-\pi_j) &=& \kappa(\al_2+\cdots +\phi_{j-1})=\kappa(\al_2)=\overline{\al}_2, \\  
\kappa(e^2_1+\pi_j) &=& \kappa(\al_2+\cdots +\phi_{j-1}+2\phi_{j}+2\phi_{j+1}+\cdots+2\phi_{\ell-2})=\kappa(\al_2)=\overline{\al}_2, \\
\kappa(e^1_1) &=& \kappa(\al_1+\cdots +\phi_{\ell-2})=\kappa(\al_1+\al_2)=\overline{\al}_1+\overline{\al}_2, \\
\kappa(e^2_1) &=& \kappa(\al_2+\cdots +\phi_{\ell-2})=\kappa(\al_2)=\overline{\al}_2.
\end{eqnarray*}
We easily conclude that 
$R_{\fr{t}}^+=\{\overline{\al}_{1}, \ \overline{\al}_{2}, \ \overline{\al}_{1}+\overline{\al}_{2}, 
 \ \overline{\al}_{1}+2\overline{\al}_{2}\}$, thus $\fr{m}=\fr{m}_1\oplus\fr{m}_2\oplus\fr{m}_3\oplus\fr{m}_4$.

\medskip
{\it Case of $E_6$.} The root system $R$ of $E_6$ is given by 
\[
R=\{\pm(e_i-e_j), \ e_{i}+e_{j}+e_{k}\pm e, \ \pm(2e) : 1\leq i<j<k\leq 6 \},
\]
where $e$ is a vector orthogonal to all vectors $e_i$.  A basis of $R$ is given by 
$\Pi=\{\al_{i}=e_{i}-e_{i+1} \ (i<5),  \ \al_{6}=e_{4}+e_{5}+e_{6}+e\}$. 
The highest root  is given by 
$\tilde{\al}=\al_{1}+2\al_{2}+3\al_{3}+2\al_{4}+\al_{5}+2\al_{6}$, 
and   any root $\al\in R$
is expressed as 
$\al =c_1\al _1+c_2\al _2+c_3\al _3+c_4\al _4+c_5\al_5+c_6\al_6$, 
with
$|c_1|\leq 1$, $|c_2|\leq 2$, $|c_3|\leq 3$, 
$|c_4|\leq 2$, $|c_5|\leq 1$, $|c_6|\leq 2$.   
We fix the set of positive roots to be $R^+=\{e _i-e _j\ (i<j), \ 2e, \ e _i+e _j+e _k+e\  (i<j<k)\}$. 
There are several  choices of pairs 
$(\Pi, \Pi_{K})$ of Type II. 
Let $\Pi_{M}$ be one of  the
sets $\{\al_1, \al_2\}$, $\{\al_4, \al_5\}$,    $\{\al_1, \al_{6}\}$, or $\{\al_5, \al_6\}$.
  These sets correspond to the following 
  Dynkin diagrams, 
  which define the same flag manifold 
  $M=G/K=E_6/SU(5)\times U(1)\times U(1)$.    
  
\medskip
\begin{center}
\begin{picture}(100,25)(0, -10)
\put(15, 9.5){\circle*{4 }}
\put(15,20){\makebox(0,0){$\al_1$}}
\put(17, 10){\line(1,0){14}}
\put(33.5, 9.5){\circle*{4 }}
\put(33.5, 20){\makebox(0,0){$\al_2$}}
\put(35, 10){\line(1,0){13.6}}
 \put(51, 9.5){\circle{4 }}
 \put(51,20){\makebox(0,0){$\al_3$}}
\put(51,-6){\line(0,1){14}}
\put(53,10){\line(1,0){14}}
\put(69,9.5){\circle{4 }}
\put(69,20){\makebox(0,0){$\al_4$}}
\put(51,-8){\circle{4}}
\put(51,-16){\makebox(0,0){$\al_6$}}
\put(71,10){\line(1,0){16}}
\put(89,9.5){\circle{4 }}
\put(89,20){\makebox(0,0){$\al_5$}}
\end{picture}
\ \
 \begin{picture}(100,25)(0, -10)

\put(15, 9.5){\circle{4 }}
\put(15,20){\makebox(0,0){$\al_1$}}
\put(17, 10){\line(1,0){14}}
\put(33.5, 9.5){\circle{4 }}
\put(33.5, 20){\makebox(0,0){$\al_2$}}
\put(35, 10){\line(1,0){13.6}}
 \put(51, 9.5){\circle{4 }}
 \put(51,20){\makebox(0,0){$\al_3$}}
\put(51,-6){\line(0,1){14}}
\put(53,10){\line(1,0){14}}
\put(69,9.5){\circle*{4 }}
\put(69,20){\makebox(0,0){$\al_4$}}
\put(51,-8){\circle{4}}
\put(51,-16){\makebox(0,0){$\al_6$}}
\put(71,10){\line(1,0){16}}
\put(89,9.5){\circle*{4 }}
\put(89,20){\makebox(0,0){$\al_5$}}
\end{picture}
\ \
\begin{picture}(100,25)(0, -10)

\put(15, 9.5){\circle*{4 }}
\put(15,20){\makebox(0,0){$\al_1$}}
\put(17, 10){\line(1,0){14}}
\put(33.5, 9.5){\circle{4 }}
\put(33.5, 20){\makebox(0,0){$\al_2$}}
\put(35, 10){\line(1,0){13.6}}
 \put(51, 9.5){\circle{4 }}
 \put(51,20){\makebox(0,0){$\al_3$}}
\put(51,-6){\line(0,1){14}}
\put(53,10){\line(1,0){14}}
\put(69,9.5){\circle{4 }}
\put(69,20){\makebox(0,0){$\al_4$}}
\put(51,-8){\circle*{4}}
\put(51,-16){\makebox(0,0){$\al_6$}}
\put(71,10){\line(1,0){16}}
\put(89,9.5){\circle{4 }}
\put(89,20){\makebox(0,0){$\al_5$}}
\end{picture}
\ \
 \begin{picture}(100,25)(0, -10)

\put(15, 9.5){\circle{4 }}
\put(15,20){\makebox(0,0){$\al_1$}}
\put(17, 10){\line(1,0){14}}
\put(33.5, 9.5){\circle{4 }}
\put(33.5, 20){\makebox(0,0){$\al_2$}}
\put(35, 10){\line(1,0){13.6}}
 \put(51, 9.5){\circle{4 }}
 \put(51,20){\makebox(0,0){$\al_3$}}
\put(51,-6){\line(0,1){14}}
\put(53,10){\line(1,0){14}}
\put(69,9.5){\circle{4 }}
\put(69,20){\makebox(0,0){$\al_4$}}
\put(51,-8){\circle*{4}}
\put(51,-16){\makebox(0,0){$\al_6$}}
\put(71,10){\line(1,0){16}}
\put(89,9.5){\circle*{4 }}
\put(89,20){\makebox(0,0){$\al_5$}}
\end{picture}
 \end{center}
 \medskip
 
Let  $\Pi_{M} =\{\al_1, \al_2\}$, so
$\Pi_{K}=\{\al_3, \al_4, \al_5, \al_6\}$
is a basis of simple roots for the root system $R_{K}$.  
In particular, we obtain that $R_{K}^{+}=\{\al_3, \al_4,  \al_5, \al_6, \al_3+\al_4, 
\al_4+\al_5, \al_3+\al_4+\al_5, \al_3+\al_4+\al_5+\al_6, \al_3+\al_4+\al_6, \al_3+\al_6\}$, 
so the positive complementary roots   are  the following:
\begin{equation}\label{E6}
R_M^+=\left\{ 
\begin{tabular}{ll} 
$\al_1+2\al_2+3\al_3+2\al_4+\al_5+2\al_6$ &    $\al_2+2\al_3+2\al_4+\al_5+\al_6$  \\
$\al_1+2\al_2+3\al_3+2\al_4+\al_5+\al_6$  &    $\al_2+2\al_3+\al_4+\al_5+\al_6$\\
$\al_1+2\al_2+2\al_3+2\al_4+\al_5+\al_6$  &    $\al_2+2\al_3+\al_4+\al_6$\\
$\al_1+2\al_2+2\al_3+\al_4+\al_5+\al_6$   &    $\al_2+\al_3+\al_4+\al_5+\al_6$\\
$\al_1+2\al_2+2\al_3+\al_4+\al_6$         &    $\al_2+\al_3+\al_4+\al_6$\\
$\al_1+\al_2+2\al_3+2\al_4+\al_5+\al_6$   &    $\al_2+\al_3+\al_4+\al_5$\\
$\al_1+\al_2+2\al_3+\al_4+\al_5+\al_6$    &    $\al_2+\al_3+\al_6$\\
$\al_1+\al_2+2\al_3+\al_4+\al_6$          &    $\al_2+\al_3+\al_4$\\
$\al_1+\al_2+\al_3+\al_4+\al_5+\al_6$     &    $\al_2+\al_3$\\
$\al_1+\al_2+\al_3+\al_4+\al_6$           &    $\al_2$\\
$\al_1+\al_2+\al_3+\al_6$                 &    $\al_1+\al_2+\al_3,$\\
$\al_1+\al_2+\al_3+\al_4+\al_5$           &    $\al_1+\al_2$    \\
$\al_1+\al_2+\al_3+\al_4$                 &    $\al_1$\\
   \end{tabular} \right. 
\end{equation}
It is $\fr{t}=\Span_{\mathbb{R}}\{\overline{\al}_1, \overline{\al}_2\}$,
where $\overline{\al}_{i}=\kappa(\al_{i})$ $(i=1, 2)$.  
By using the above expressions and relation (\ref{typeII}), 
we obtain that $R_{\fr{t}}^+=\{\overline{\al}_1, \overline{\al}_2, 
\overline{\al}_1+\overline{\al}_2, \overline{\al}_1+2\overline{\al}_2\}$, 
thus we  conclude that 
$\fr{m}=\fr{m}_1\oplus\fr{m}_2\oplus\fr{m}_3\oplus\fr{m}_4$, where
\begin{equation}\label{e66}
\begin{tabular}{ll}
$\fr{m}_1=(\fr{m}_{\overline{\al}_1}\oplus  \fr{m}_{-\overline{\al}_1})^{\tau},$  & $\fr{m}_3=(\fr{m}_{(\overline{\al}_1+\overline{\al}_2)}\oplus  \fr{m}_{-(\overline{\al}_1+\overline{\al}_2)})^{\tau}$, \\
 $\fr{m}_2=(\fr{m}_{\overline{\al}_2}\oplus  \fr{m}_{-\overline{\al}_2})^{\tau}$, & $\fr{m}_4=(\fr{m}_{(\overline{\al}_1+2\overline{\al}_2)}\oplus  \fr{m}_{-(\overline{\al}_1+2\overline{\al}_2)})^{\tau}$.
\end{tabular}
\end{equation}

\end{proof}
  
 \begin{prop}\label{cor1}
The only generalized flag manifolds with four isotropy summands 
are those obtained in Proposition \ref{classification}.
\end{prop}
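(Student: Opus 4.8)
The plan is to show that the number of isotropy summands $s=|R^{+}_{\fr{t}}|$ (Proposition \ref{isotropy}) can equal $4$ only when $\Pi_{M}=\Pi\setminus\Pi_{K}$ is of one of the two types appearing in Proposition \ref{classification}; since that proposition (through the case-by-case inspection in its proof) already runs over all pairs $(\Pi,\Pi_{K})$ of Type I and all four-summand pairs of Type II across the classical and exceptional groups, the assertion follows. Thus the whole problem reduces to constraining the multiset of heights $\{\Hgt(\al):\al\in\Pi_{M}\}$. Write $\Pi_{M}=\{\al_{i_{1}},\dots,\al_{i_{r}}\}$, so that by Proposition \ref{simple} the restrictions $\overline{\al}_{i_{1}},\dots,\overline{\al}_{i_{r}}$ are $r$ linearly independent positive $\fr{t}$-roots, and set $M=\sum_{k=1}^{r}\Hgt(\al_{i_{k}})$.

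First I would introduce the \emph{level} of a $\fr{t}$-root $\xi=\sum_{k}c_{k}\overline{\al}_{i_{k}}$ as $\sum_{k}c_{k}$; by (\ref{troots}) this equals the sum of the coefficients on the black simple roots $\Pi_{M}$ (the $f_{i}$ there) of any complementary root restricting to $\xi$. The key lemma is that every level $1,2,\dots,M$ is occupied by at least one positive $\fr{t}$-root. To see this, descend from the highest root $\tilde{\al}$ (whose black-coefficient sum is $M$) to a simple root by subtracting one simple root at a time, keeping each intermediate vector a positive root, which is possible by the standard fact that a non-simple positive root can always be lowered. Along this chain the black-coefficient sum drops by $0$ or $1$ at each step, starting at $M$ and ending at $0$ or $1$; hence it assumes every value in $\{1,\dots,M\}$, and each vector with positive black-coefficient sum lies in $R_{M}$ and restricts to a $\fr{t}$-root of the corresponding level. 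Since level $1$ is realised exactly by the $r$ distinct $\fr{t}$-roots $\overline{\al}_{i_{k}}$, and levels $2,\dots,M$ contribute at least one further $\fr{t}$-root each, all distinct across levels, I obtain the bound
\[
s \ \ge\ r+(M-1).
\]

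With this bound the case analysis is immediate. Since each height is at least $1$ we have $M\ge r$, so $s=4$ forces $2r-1\le r+M-1\le 4$, i.e. $r\le 2$. If $r=1$ then $s=M$ exactly, because a single black root produces precisely the $\fr{t}$-roots $\overline{\al}_{i_{1}},2\overline{\al}_{i_{1}},\dots,M\overline{\al}_{i_{1}}$; hence $s=4$ forces $\Hgt(\al_{i_{1}})=4$, which is Type I. If $r=2$ then $r+M-1\le 4$ gives $M\le 3$; the value $M=2$ (heights $(1,1)$) yields at most one $\fr{t}$-root at level $2$, so $s\le 3$, while $M=3$ (heights $(1,2)$) is exactly Type II and gives $s\in\{4,5\}$ according as one or both of $\overline{\al}_{i_{1}}+\overline{\al}_{i_{2}}$ and $2\overline{\al}_{i_{2}}$ occur. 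Therefore $s=4$ holds only for a Type I pair or for a Type II pair with exactly four $\fr{t}$-roots, and Proposition \ref{classification} lists all of these.

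The routine but genuinely laborious part is the exhaustiveness invoked at the end: one must confirm that the inspection in the proof of Proposition \ref{classification} really covers every height-$4$ simple root (which exist only in $F_{4},E_{7},E_{8}$) and every height-$(1,2)$ pair in $SO(2\ell+1),Sp(\ell),SO(2\ell),E_{6},E_{7}$, discarding those that yield five summands. The conceptual obstacle, by contrast, is isolating the correct lower bound $s\ge r+M-1$; once the level lemma is in place, the dichotomy into Types I and II, and hence the completeness of Tables 3 and 4, is forced.
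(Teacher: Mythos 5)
Your argument is correct, and it takes a genuinely different route from the paper at the decisive step. The paper's proof of this proposition is a direct case-by-case inspection: it computes $|R_{\fr{t}}^{+}|$ for the paintings with heights $(2,2)$ and $(1,3)$, asserts that ``all the other possible paintings'' obviously give more than four summands, and then lists the Type II pairs that produce five $\fr{t}$-roots. You instead isolate a general lower bound $s\ge r+(M-1)$, where $M$ is the total height of the black roots, proved by descending from the highest root through a chain of positive roots and observing that the black-coefficient sum steps down by at most $1$, so every level $1,\dots,M$ is realised by some complementary root. (The descent uses the standard fact that a non-simple positive root can be lowered by a simple root; the well-definedness of the level and the distinctness of the level-one $\fr{t}$-roots $\overline{\al}_{i_1},\dots,\overline{\al}_{i_r}$ come from Proposition \ref{simple}.) This bound immediately forces $r\le 2$ and, combined with the trivial upper bounds on the number of admissible coefficient vectors, pins the height multiset to $\{4\}$ or $\{1,2\}$, which is exactly the Type I / Type II dichotomy; it also recovers $s=M$ exactly when $r=1$ and $s\in\{4,5\}$ for heights $(1,2)$. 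What your approach buys is a rigorous and uniform justification of the paper's ``it is obvious'' step, reducing the unavoidable residual casework to the same two tasks the paper performs: enumerating where the two admissible height patterns occur, and sorting the height-$(1,2)$ pairs into those with four versus five summands. That residual enumeration you correctly defer to Proposition \ref{classification} and its proof, just as the paper does.
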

\begin{proof}
As there is no general known method, we have to proceed into a case by case examination, 
by painting black all possible combinations of simple roots.  
Let  $G$ be a   simple Lie group such that $G\in\{SO(2\ell+1), Sp(\ell), SO(2\ell)\}$, or $G\in\{F_4, E_6, E_7, E_8\}$.
 Let $\Pi_{M}=\{\al_{i}, \al_{j} : \Hgt(\al_{i})=\Hgt(\al_{j})=2\}$.  By  use of (\ref{troots})  we conclude that  $|R_{\fr{t}}^{+}|\in\{5, 6\}$.
 Thus  the isotropy representation of the corresponding flag manifold $M=G/K$ decomposes into more than four irreducible submodules. The same is true if we set
  $\Pi_{M}=\{\al_{i}, \al_{j} : \Hgt(\al_{i})=1, \ \Hgt(\al_{j})=3\}$. In particular, such a choise exists only if $G\in\{E_6, E_7\}$, and we  find that $|R_{\fr{t}}^{+}|=6$.
   It is obvious that all the other possible paintings,
  determine flag manifolds with more than four isotropy summands.
Let us now describe  pairs $(\Pi, \Pi_K)$ of Type II, 
which define exactly five positive $\fr{t}$-roots and 
thus do not belong in our classification.  Such pairs 
appear for the Lie groups $SO(2\ell+1), SO(2\ell), E_6$ 
and $E_7$. The process is the same as in Proposition \ref{classification},  thus we don't insist on  details.

\medskip
{\it Case of $SO(2\ell+1)$.}  
Let $\Pi_{M}=\{\al_1, \al_p : 3\leq p\leq \ell\}$. 
This determines the following painted Dynkin diagram 
\[
\begin{picture}(160,26)(-5,-5)
\put(0, 0){\circle*{4}}
\put(0,10){\makebox(0,0){$\al_1$}}
\put(2, 0.3){\line(1,0){14}}
\put(18, 0){\circle{4}}
\put(20, 0.3){\line(1,0){10}}
\put(18,10){\makebox(0,0){$\al_2$}}
\put(40, 0){\makebox(0,0){$\ldots$}}
\put(50, 0.3){\line(1,0){10}}
\put(60, 0){\circle*{4.4}}
\put(60, 10){\makebox(0,0){$\al_p$}}
\put(60, 0.3){\line(1,0){10}}
\put(80, 0){\makebox(0,0){$\ldots$}}
\put(90, 0.3){\line(1,0){10}}
\put(102, 0){\circle{4}}
\put(102, 10){\makebox(0,0){$\al_{\ell-1}$}}
\put(104, 1.1){\line(1,0){14}}
\put(104, -0.6){\line(1,0){14}}
\put(114.5, -1.5){\scriptsize $>$}
\put(121.5, 0){\circle{4}}
\put(122, 10){\makebox(0,0){$\al_\ell$}}
\end{picture}
\]
or equivalently the generalized flag manifold 
$M=G/K=SO(2\ell+1)/U(1)\times U(p-1)\times SO(2(\ell-p)+1)$,
with  $R_{\fr{t}}^+=\{\overline{\al}_1, \ \overline{\al}_p, \ 2\overline{\al}_p, 
 \ \overline{\al}_1+\overline{\al}_p, \ \overline{\al}_1+2\overline{\al}_p\}$, thus $|R_{\fr{t}}^{+}|=5$.  
For $p=\ell$ we obtain the space 
$SO(2\ell+1)/U(1)\times U(\ell-1)$  
which has also five isotropy summands.

\medskip
{\it Case of $SO(2\ell)$.}  Let 
$\Pi_{M}=\{\al_1, \al_p : 3\leq p\leq \ell-2\}$.  
This determines the following painted Dynkin diagram
\[
\begin{picture}(160,40)(-15,-23)
\put(0, 0){\circle*{4}}
\put(0,10){\makebox(0,0){$\al_1$}}
\put(2, 0){\line(1,0){14}}
\put(18, 0){\circle{4}}
\put(20, 0){\line(1,0){10}}
\put(18,10){\makebox(0,0){$\al_2$}}
\put(40, 0){\makebox(0,0){$\ldots$}}
\put(50, 0){\line(1,0){10}}
 \put(60, 0){\circle*{4.4}}
\put(60, 10){\makebox(0,0){$\al_p$}}
\put(60, 0){\line(1,0){10}}
\put(80, 0){\makebox(0,0){$\ldots$}}
\put(90, 0){\line(1,0){10}}
\put(102, 0){\circle{4}}
\put(103.7, 1){\line(2,1){10}}
\put(103.7, -1){\line(2,-1){10}}
\put(115.5, 6){\circle{4}}
\put(115.5, -6){\circle{4}}
\put(123.5, 14){\makebox(0,0){$\al_{\ell-1}$}}
\put(118, -16){$\al_\ell$}
\end{picture}
\]
or equivalently the flag manifold 
$M=G/K=SO(2\ell)/U(1)\times U(p-1)\times SO(2(\ell-p))$ 
with $R_{\fr{t}}^+=\{\overline{\al}_1, \overline{\al}_p, 2\overline{\al}_p, 
\overline{\al}_1+\overline{\al}_p, \overline{\al}_1+2\overline{\al}_p\}$, so $|R^+_{\fr{t}}|=5$.

\medskip
{\it Case of $E_6$.}
We follow the notation of Proposition \ref{classification}.  For $E_6$, 
pairs $(\Pi, \Pi_K)$ of Type II 
are also obtaining by choosing $\Pi_{M}=\{\al_1,\al_4\}$, 
or $\Pi_M=\{\al_2, \al_5\}$.  
The corresponding Dynkin diagrams are 
given below, and these determine the same flag manifold 
$M=G/K=E_6/SU(4)\times SU(2)\times U(1)\times U(1)$.
\medskip
\begin{center}
\begin{picture}(100,25)(0, -10)

\put(15, 9.5){\circle*{4 }}
\put(15,20){\makebox(0,0){$\al_1$}}
\put(17, 10){\line(1,0){14}}
\put(33.5, 9.5){\circle{4 }}
\put(33.5, 20){\makebox(0,0){$\al_2$}}
\put(35, 10){\line(1,0){13.6}}
 \put(51, 9.5){\circle{4 }}
 \put(51,20){\makebox(0,0){$\al_3$}}
\put(51,-6){\line(0,1){14}}
\put(53,10){\line(1,0){14}}
\put(69,9.5){\circle*{4 }}
\put(69,20){\makebox(0,0){$\al_4$}}
\put(51,-8){\circle{4}}
\put(51,-16){\makebox(0,0){$\al_6$}}
\put(71,10){\line(1,0){16}}
\put(89,9.5){\circle{4 }}
\put(89,20){\makebox(0,0){$\al_5$}}
\end{picture}
\ \
 \begin{picture}(100,25)(0, -10)

\put(15, 9.5){\circle{4 }}
\put(15,20){\makebox(0,0){$\al_1$}}
\put(17, 10){\line(1,0){14}}
\put(33.5, 9.5){\circle*{4 }}
\put(33.5, 20){\makebox(0,0){$\al_2$}}
\put(35, 10){\line(1,0){13.6}}
 \put(51, 9.5){\circle{4 }}
 \put(51,20){\makebox(0,0){$\al_3$}}
\put(51,-6){\line(0,1){14}}
\put(53,10){\line(1,0){14}}
\put(69,9.5){\circle{4 }}
\put(69,20){\makebox(0,0){$\al_4$}}
\put(51,-8){\circle{4}}
\put(51,-16){\makebox(0,0){$\al_6$}}
\put(71,10){\line(1,0){16}}
\put(89,9.5){\circle*{4 }}
\put(89,20){\makebox(0,0){$\al_5$}}
\end{picture}
\end{center}
By using the expressions of each positive root
in terms of simple roots (cf. \cite{AA}), we easily
obtain that $|R_{\fr{t}}^+|=5$.  

\medskip
{\it Case of $E_7$.}  Let $\Pi_M=\{\al_1, \al_7\}$ 
with $\Hgt(\al_1)=1$ and $\Hgt(\al_7)=2$.  This choise determines the flag manifold
$M=G/K=E_7/SU(6)\times U(1)\times U(1)$ with $|R_{\fr{t}}^+|=5$.
\end{proof}

 For practical purposes, we refer to spaces of Table 3 (resp. of Table 4) as {\it (generalized) flag manifolds of Type I} (resp.  {\it (generalized) flag manifolds of Type II}),
 depending on the type of the pair $(\Pi, \Pi_K)$.  
For flag manifolds of Type II, we make a further division into
{\it Type IIa} and {\it Type IIb}, depending on
whether the first painted black simple root 
has height 1 or 2 (respectively).
So the flag manifolds $M=G/K$ which correspond  
to the  Lie groups $SO(2\ell+1), SO(2\ell)_{(i)}, E_6,$ and $E_7$ 
are of Type IIa, 
since $\Pi_{M}=\{\al_1, \al_2\}$ with $\Hgt(\al_1)=1$, $\Hgt(\al_2)=2$.
Also, the flag manifolds $M=G/K$ which correspond to the Lie groups $Sp(\ell)$ and $SO(2\ell)_{(ii)}$ 
are of Type IIb, 
since it is $\Pi_{M}=\{\al_{p}, \al_\ell : 1\leq p\leq \ell-1\}$  and  
$\Pi_{M}=\{\al_{p}, \al_\ell : 2\leq p\leq \ell-2\}$ 
respectively, with
$\Hgt(\al_p)=2$ and $\Hgt(\al_\ell)=1$.   

Let now fix notation for later use.   Let $M=G/K$ be a generalized flag manifold of Type I  or II, and let
   $\fr{m}=\fr{m}_1\oplus\fr{m}_2\oplus\fr{m}_3\oplus\fr{m}_4$ be  a
decomposition 
of $\fr{m}=T_{o}M$ into   irreducible inequivalent  $\ad(\fr{k})$-submodules, with respect to the negative of the Killing form $B$  of $G$.
Then according to (\ref{Inva}), a $G$-invariant metric on $M=G/K$ is given by
\begin{equation}\label{metrI}
\left\langle \ , \ \right\rangle=x_1\cdot (-B)|_{\fr{m}_1}+
x_2\cdot (-B)|_{\fr{m}_2}+x_3\cdot (-B)|_{\fr{m}_3}+x_4\cdot (-B)|_{\fr{m}_4},
\end{equation}
 for positive real numbers $x_1, x_2, x_3, x_4$.
Next we will denote such metrics with $g=(x_1, x_2, x_3, x_4)$.

\markboth{Andreas Arvanitoyeorgos and Ioannis Chrysikos}{Invariant Einstein metrics  on  flag manifolds with four isotropy summands}
\section{K\"ahler-Einstein metrics}
\markboth{Andreas Arvanitoyeorgos and Ioannis Chrysikos}{Invariant Einstein metrics  on  flag manifolds with four isotropy summands}

  In order to obtain Einstein equation for flag manifolds with four isotropy summands is crucial to know the corresponding K\"ahler-Einstein metrics.
  These metrics are obtained by an independent procedure  which is described in this section.
 \subsection{$G$-invariant complex stuctures}
  Let $M=G/K$ be a   generalized flag manifold  
  associated to a pair $(\Pi, \Pi_{K})$, where $G$ is a compact simple Lie group, 
  and let $\fr{g}=\fr{k}\oplus\fr{m}$ be a reductive decomposition of $\fr{g}$. 
  A  $G$-invariant  (almost) complex structure $J$ on $M$ 
(as a tensor field of type (1, 1)  satisfying  $J^{2}=-\Id$), 
can be described  by an   endomorphism $J_{o} : \fr{m}\to \fr{m}$   
such that $J^{2}_{o} = -{\rm Id}_{\fr{m}}$ which is $\Ad(K)$-invariant, that 
is $J_{o}(\chi(k)X) = \chi(k)(J_{o}X),$ for all $k\in K$ and $X\in\fr{m}$.  

Let $\mathcal{J}$  denote the set of all $G$-invariant
complex structures  on $M$. It is well known (\cite{Wan}) that  $\mathcal{J}$ is a finite set.     
In fact, it can  be shown that there is a 
one-to-one  
correspondence between elements in $\mathcal{J}$ and parabolic  subgroups 
$P$ of $G^{\mathbb{C}}$ such that $G\cap P=K$.
 In \cite[Prop.~ 13.8]{B} it was shown that if $M=G/K$ is a flag manifold such that
the center of the subgroup $K$ is one-dimensional, then $M$ admits a unique (up to equivalence) 
 $G$-invariant complex structure. This case arises by painting black only one simple root in  the   Dynkin diagram of $G$.
 Thus, all flag manifolds $M=G/K$ of Type I admit  a unique $G$-invariant complex structure. 

The set $\mathcal{J}$ is also related to invariant orderings and $\fr{t}$-chambers.  
 
 \begin{prop}\label{chambers}\textnormal{(\cite[p.~625]{Bor})} 
Let $M=G/K$ be a generalized flag manifold defined by a pair $(\Pi, \Pi_{K})$. 
Then there is a one-to-one correspondence between  $\fr{t}$-chambers, 
invariant orderings $R_{M}^{+}$ in $R_{M}$,
and $G$-invariant complex structures  given by 
$J_{o}E_{\pm\al} = \pm i E_{\pm\al},$ for all $\al\in R_{M}^{+}$.
\end{prop}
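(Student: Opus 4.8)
The plan is to exploit the correspondence between $\fr{t}$-chambers and invariant orderings $R_{M}^{+}$ in $R_{M}$ that was established above (following \cite[p.~621]{Bor}); it then suffices to set up a bijection between invariant orderings and $G$-invariant complex structures via the prescription $J_{o}E_{\pm\al}=\pm iE_{\pm\al}$. First I would fix an invariant ordering $R_{M}^{+}$, extend this rule $\mathbb{C}$-linearly to $\fr{m}^{\mathbb{C}}=\sum_{\al\in R_{M}}\mathbb{C}E_{\al}$, and verify the defining properties of a $G$-invariant complex structure. That $J_{o}^{2}=-\Id$ is immediate, since $(\pm i)^{2}=-1$ on each $\mathbb{C}E_{\pm\al}$. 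That $J_{o}$ restricts to a real endomorphism of $\fr{m}=(\fr{m}^{\mathbb{C}})^{\tau}$ follows because $J_{o}$ commutes with the conjugation $\tau$: indeed $\tau J_{o}E_{\al}=\overline{i}\,\tau(E_{\al})=-iE_{-\al}=J_{o}E_{-\al}=J_{o}\tau E_{\al}$, and on the real basis one recovers exactly $J_{o}A_{\al}=B_{\al}$, $J_{o}B_{\al}=-A_{\al}$.

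Next I would check $\Ad(K)$-invariance. Since $K=G\cap P$ is connected, it is enough to show $J_{o}\circ\ad(X)=\ad(X)\circ J_{o}$ on $\fr{m}^{\mathbb{C}}$ for $X\in\fr{k}^{\mathbb{C}}$, and by linearity for $X=E_{\phi}$ with $\phi\in R_{K}$. For $\al\in R_{M}^{+}$ with $\al+\phi\in R$, condition $(ii)$ of an invariant ordering (applied with $\phi\in R_{K}$) gives $\al+\phi\in R_{M}^{+}$, so $[E_{\phi},E_{\al}]$ is a multiple of $E_{\al+\phi}$, on which $J_{o}$ again acts by $i$; the case $\al\in R_{M}^{-}$ is obtained by conjugation. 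Hence $J_{o}$ commutes with $\ad(E_{\phi})$, which gives the required invariance.

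The main obstacle is \emph{integrability}, namely showing that the $\Ad(K)$-invariant almost complex structure $J$ determined by $J_{o}$ is a genuine complex structure. By the homogeneous version of the Newlander--Nirenberg theorem this is equivalent to involutivity of the $(+i)$-eigenspace, i.e. to $\fr{q}=\fr{k}^{\mathbb{C}}\oplus\fr{m}^{+}$ being a complex subalgebra of $\fr{g}^{\mathbb{C}}$, where $\fr{m}^{+}=\sum_{\al\in R_{M}^{+}}\mathbb{C}E_{\al}$. I would verify this directly from the structure relations (\ref{str}): for $\al,\be\in R_{M}^{+}$ with $\al+\be\in R$, condition $(ii)$ forces $\al+\be\in R_{M}^{+}$ (note $\al+\be$ cannot lie in $R_{K}$, since $\kappa(\al+\be)=\kappa(\al)+\kappa(\be)$ is a sum of positive $\fr{t}$-roots and hence nonzero), so $[\fr{m}^{+},\fr{m}^{+}]\subset\fr{m}^{+}$; together with $[\fr{k}^{\mathbb{C}},\fr{m}^{+}]\subset\fr{m}^{+}$ this shows $\fr{q}$ is a subalgebra. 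In fact $\fr{m}^{+}$ coincides with the nilradical $\fr{n}$ and $\fr{q}=\fr{p}$ is precisely the parabolic subalgebra (\ref{parab}), so integrability is automatic.

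Finally I would construct the inverse assignment and check the two maps are mutually inverse. Given a $G$-invariant complex structure $J$, its $\Ad(K)$-invariance means $J_{o}$ commutes with the $\ad(\fr{k}^{\mathbb{C}})$-action; since each $\fr{m}_{\xi}$ $(\xi\in R_{\fr{t}})$ is an irreducible $\ad(\fr{k}^{\mathbb{C}})$-module by Proposition \ref{isotropy}, Schur's lemma forces $J_{o}$ to act as a scalar $\pm i$ on each $\fr{m}_{\xi}$, while preservation of the real form forces the conjugate scalar on $\fr{m}_{-\xi}$. Declaring $\xi$ positive precisely when $J_{o}|_{\fr{m}_{\xi}}=i\,\Id$ yields a polarization $R_{\fr{t}}=R_{\fr{t}}^{+}\sqcup R_{\fr{t}}^{-}$, hence a $\fr{t}$-chamber and the associated invariant ordering $R_{M}^{+}=\kappa^{-1}(R_{\fr{t}}^{+})\cap R_{M}$. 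By construction this is inverse to the forward map, and composing with the already established correspondence between $\fr{t}$-chambers and invariant orderings yields the three-way bijection of the statement.
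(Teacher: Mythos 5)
The paper does not prove this proposition at all: it is quoted verbatim from Bordemann--Forger--R\"omer \cite[p.~625]{Bor}, so there is no internal argument to compare yours against. Your proof is the standard one and its overall architecture is sound: the forward map is correctly verified ($J_o^2=-\Id$, compatibility with $\tau$ recovering $J_oA_\al=B_\al$, $J_oB_\al=-A_\al$, equivariance under $\ad(E_\phi)$ for $\phi\in R_K$ via condition $(ii)$, and integrability via closedness of $\fr{k}^{\mathbb{C}}\oplus\fr{m}^+$), and the inverse map via Schur's lemma on the mutually inequivalent irreducible modules $\fr{m}_\xi$ of Proposition \ref{isotropy} is the right idea. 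One side remark is inaccurate but harmless: $\fr{m}^+$ equals the nilradical $\fr{n}$ and $\fr{q}$ equals the particular parabolic $\fr{p}$ of (\ref{parab}) only for the \emph{natural} invariant ordering $R_M^+=R^+\setminus R_K^+$; for a general invariant ordering $\fr{q}$ is a different parabolic containing $\fr{k}^{\mathbb{C}}$. Your direct verification of the subalgebra property makes this identification unnecessary anyway.

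The one step that needs tightening is in the inverse direction. In the paper's terminology a ``polarization'' is by definition a partition of $R_{\fr{t}}$ cut out by a regular element, and only such partitions correspond to $\fr{t}$-chambers; an arbitrary partition $S\sqcup(-S)$ obtained from the $\pm i$ eigenvalues of $J_o$ need not be one. To close this you must use integrability of $J$: closedness of the $+i$-eigenspace plus $\fr{k}^{\mathbb{C}}$ under brackets shows that $S$ is closed under addition within $R_{\fr{t}}$, hence that $R_M^+=\kappa^{-1}(S)\cap R_M$ satisfies conditions $(i)$ and $(ii)$ of an invariant ordering (condition $(ii)$ for $\phi\in R_K$ follows from $\kappa(\phi+\al)=\kappa(\al)$), and only then does the cited bijection between invariant orderings and $\fr{t}$-chambers apply. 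As written you pass from ``polarization'' to ``$\fr{t}$-chamber'' without this verification; the repair is routine, but without it the three-way correspondence is not established for an arbitrary invariant complex structure.
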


 By using   Proposition \ref{chambers}, 
we obtain the following theorem.
 
\begin{theorem}\label{The2}
Let $M=G/K$ be a generalized flag manifold of Type II.  Then $M$ admits precisely four invariant complex structures.
\end{theorem}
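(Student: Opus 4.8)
The plan is to count the $G$-invariant complex structures by counting $\fr{t}$-chambers, via the bijection of Proposition \ref{chambers}. Since $M=G/K$ is of Type II we have $\Pi_M=\Pi\setminus\Pi_K=\{\al_i,\al_j\}$ with two black simple roots, one of height $1$ and one of height $2$; hence $\dim\fr{t}^*=2$ and $\{\overline{\al}_1,\overline{\al}_2\}$ (with $\overline{\al}_1$ the restriction of the height-$1$ root and $\overline{\al}_2$ that of the height-$2$ root) is a $\fr{t}$-base of the plane $\fr{t}^*$. By the computation in Proposition \ref{classification}, the set of positive $\fr{t}$-roots of \emph{every} four-summand Type II space is
\[
R_{\fr{t}}^+=\{\overline{\al}_1,\ \overline{\al}_2,\ \overline{\al}_1+\overline{\al}_2,\ \overline{\al}_1+2\overline{\al}_2\}.
\]
This common shape, valid for both Type IIa and Type IIb after the above relabelling, is what allows all the spaces of Table 4 to be handled simultaneously.

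First I would reformulate the chamber count as a problem about an arrangement of lines. Each $\xi\in R_{\fr{t}}^+$ determines a wall $\fr{t}_\xi=\ker\xi$, which is a line through the origin of $\fr{t}\cong\mathbb{R}^2$, and the $\fr{t}$-chambers are the connected components of $\fr{t}\setminus\bigcup_{\xi}\fr{t}_\xi$. The elementary fact I would use is that $n$ pairwise distinct lines through the origin divide the plane into $2n$ sectors; thus the only thing to verify is that the four walls are pairwise distinct, i.e. that the four co-vectors in $R_{\fr{t}}^+$ are pairwise non-proportional. This is immediate from their coordinates $(1,0),(0,1),(1,1),(1,2)$ in the basis $\{\overline{\al}_1,\overline{\al}_2\}$, so there are exactly $2\cdot 4=8$ chambers, equivalently $8$ invariant orderings $R_M^+$.

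Finally I would pass from chambers to the asserted number by taking conjugation into account. The conjugation $J\mapsto -J$ corresponds under Proposition \ref{chambers} to $R_M^+\mapsto R_M^-$, that is, to the antipodal map $C\mapsto -C$ on chambers; since a proper sector never satisfies $-C=C$, the eight chambers fall into exactly four conjugate pairs, giving precisely four invariant complex structures up to equivalence (the same convention already adopted for Type I, where the two chambers produce one structure). The step I expect to be the main obstacle is precisely this last bookkeeping: one must make sure that the count in the statement is the one taken up to conjugation, and that no coincidences among the eight chambers occur beyond the antipodal pairing --- both of which are guaranteed here by the four walls being genuinely distinct. The remaining content is a direct application of Propositions \ref{chambers} and \ref{classification}.
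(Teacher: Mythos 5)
Your proposal is correct and follows essentially the same route as the paper: both count the $\fr{t}$-chambers cut out by the four walls (eight of them), invoke Proposition \ref{chambers} to turn chambers into invariant orderings and hence complex structures, and then identify each structure with its conjugate (coming from the antipodal chamber) to arrive at four. The only cosmetic difference is that the paper writes out the defining inequalities of the four positive chambers explicitly (separately for Types IIa and IIb, cf.\ Figure 1), whereas you count sectors of a line arrangement after a relabelling that treats both types at once.
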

 
\begin{proof}
We assume that $M=G/K$ is a generalized flag manifold of Type IIa.
It is $\Pi_{M}=\{\al_1, \al_2 : \Hgt(\al_1)=1,  \Hgt(\al_2)=2\}$,
and   from Proposition \ref{classification} it follows that
$R_{\fr{t}}^+=\{\overline{\al}_{1}, \ \overline{\al}_{2}, \ \overline{\al}_{1}+\overline{\al}_{2}, 
 \ \overline{\al}_{1}+2\overline{\al}_{2}\}$. 
 In $\fr{t}$ there are four walls, 
 namely the hyperplanes (cf. Figure 1(a))
\[
 \begin{tabular}{ll}
 $\fr{a}_1=\{X\in\fr{t} : \overline{\al}_{1}(X)=0\},$ & $\fr{a}_2=\{X\in\fr{t} : \overline{\al}_{2}(X)=0\}$, \\
 $\fr{a}_3=\{X\in\fr{t} : (\overline{\al}_{1}+\overline{\al}_{2})(X)=0\},$  & $\fr{a}_4=\{X\in\fr{t} :  (\overline{\al}_{1}+2\overline{\al}_{2})(X)=0\}.$ 
   \end{tabular}
 \] 
These walls devide $\fr{t}$ into  eight $\fr{t}$-chambers.
The positive $\fr{t}$-chambers are determined by the following inequalities 
\begin{eqnarray*}
C_1&=&\{\overline{\al}_{1}>0, -\overline{\al}_{2}>0,  -(\overline{\al}_{1}+\overline{\al}_{2})>0, -(\overline{\al}_{1}+2\overline{\al}_{2})>0\},\\
C_2&=&\{\overline{\al}_{1}>0, \  \overline{\al}_{2}>0, \ \overline{\al}_{1}+\overline{\al}_{2}>0, \ \overline{\al}_{1}+2\overline{\al}_{2}>0\},\\
C_3&=&\{\overline{\al}_{1}>0, -\overline{\al}_{2}>0, \ \overline{\al}_{1}+\overline{\al}_{2}>0, -(\overline{\al}_{1}+2\overline{\al}_{2})>0\}, \\
C_4&=&\{\overline{\al}_{1}>0, -\overline{\al}_{2}>0, \ \overline{\al}_{1}+\overline{\al}_{2}>0, \ \overline{\al}_{1}+2\overline{\al}_{2}>0\}.
\end{eqnarray*}
These $\fr{t}$-chambers   induce four  invariant orderings $R_{M}^{+}$ in $R_{M}$, which in turn  
determine four invariant complex structures   on   $M$. The four negative $\fr{t}$-chambers are obtained by reversing the inequalities and these chambers induce the invariant orderings $R_{M}^{-}=-R_{M}^{+}$, which in turn determine the conjugate complex structures. 
An invariant complex structure on $M$ and its conjugate, are equivalent complex structures under an automorphism of $G$ (\cite{B}), hence we identify them.
 
A similar analysis  is applied to flag manifolds of Type IIb, and gives four invariant complex structures which  are determined by the $\fr{t}$-chambers 
$C_{1}^{'}, C_2^{'}, C_3^{'} C_4^{'}$ given in  Figure 1(b). We mention  that the shaded $\fr{t}$-chambers $C_2$ and $C'_2$ in Figure 1 are those which induce the natural invariant ordering, or equivalently the canonical  complex structure on the associated flag manifold $M=G/K$.  
 \end{proof}
 
 We remark that it may exists  some automorphism of $G$ carrying one of the above invariant complex structures onto another.
The complete classification of equivalent  invariant complex structures on flag manifolds was obtained in  \cite{N}, where stated that
  all flag manifolds  of Type II  admit  two pairs of equivalent  complex structures.   
 
 \begin{figure}
 \begin{center}
 \begin{picture}(200,80)(50, 0)
\put(100, 0){\line(1,0){100}}
\put(210,0){\makebox(0,0){$\fr{a}_2 $}}
\put(150,-50){\line(0,1){100}}
\put(155,-60){\makebox(0,0){$\fr{a}_1 $}}
\put(150, 0){\line(1,-1){50}}
\put(100, 50){\line(1,-1){50}}
\put(210,-55){\makebox(0,0){$\fr{a}_3 $}}
\put(150, 0){\line(2,-1){50}}
\put(100, 25){\line(2,-1){50}}
\put(210,-25){\makebox(0,0){$\fr{a}_4 $}}
\put(160,80){\makebox(0,0){$(a)  \ M=G/K \ \mbox{of Type IIa}$}}
\put(190, 44){\makebox(0,0){$C_2 $}}
\put(190,-8){\makebox(0,0){$C_4 $}}
\put(193,-31){\makebox(0,0){$C_3 $}}
\put(170,-38){\makebox(0,0){$C_1 $}}
\put(155, 5){\line(1,0){40}}
\put(155, 10){\line(1,0){40}}
\put(155, 15){\line(1,0){40}}
\put(155, 20){\line(1,0){40}}
\put(155, 25){\line(1,0){40}}
\put(155, 30){\line(1,0){40}}
\put(155, 35){\line(1,0){40}}
 \end{picture}
 \begin{picture}(200,80)(50, 0)
\put(100, 0){\line(1,0){100}}
\put(210,0){\makebox(0,0){$\fr{a}'_2 $}}
\put(150,-50){\line(0,1){100}}
\put(155,-60){\makebox(0,0){$\fr{a}'_1 $}}
\put(150, 0){\line(1,-1){50}}
\put(100, 50){\line(1,-1){50}}
\put(210,-45){\makebox(0,0){$\fr{a}'_3 $}}
\put(150, 0){\line(1,-2){30}}
 \put(120,60){\line(1,-2){30}}
 \put(195,-60){\makebox(0,0){$\fr{a}'_4 $}}
 \put(160,80){\makebox(0,0){$(b) \   M=G/K \ \mbox{of Type IIb}$}}
 \put(190, 44){\makebox(0,0){$C'_2 $}}
 \put(190,-15){\makebox(0,0){$C'_3 $}}
 \put(182,-42){\makebox(0,0){$C'_4 $}}
 \put(162,-45){\makebox(0,0){$C'_1 $}}
 \put(155, 5){\line(1,0){40}}
\put(155, 10){\line(1,0){40}}
\put(155, 15){\line(1,0){40}}
\put(155, 20){\line(1,0){40}}
\put(155, 25){\line(1,0){40}}
\put(155, 30){\line(1,0){40}}
\put(155, 35){\line(1,0){40}}
\end{picture}
\end{center}

\vspace{2cm}
\caption{$\fr{t}$-chambers}
\label{fig:1}       
\end{figure}

\subsection{$G$-invariant K\"ahler-Einstein metrics}
 Let $M=G/K$ be a generalized flag manifold defined  by a pair $(\Pi, \Pi_K)$,
 such that
  $\Pi_{M}=\Pi\backslash \Pi_{K}=\{\al_{i_{1}}, \ldots, \al_{i_{r}}\}$ 
where $1\leq i_{1}\leq \cdots\leq i_{r}\leq \ell$.   
Let $J$ be the $G$-invariant 
complex structure   on $M$ corresponding to an 
invariant ordering $R_{M}^{+}$ in $R_{M}$.  Then the  
  metric    given by
\begin{equation}\label{KE}
g=\{g_{\al}=c\cdot (\delta_{\fr{m}}, \al) \ (c\in\mathbb{R}) : \al\in R_{M}^+  \},
\end{equation}
where $\delta_{\fr{m}}=\frac{1}{2}\sum_{\al\in R_{M}^+}\al$, is a K\"ahler-Einstein metric (up to   constant) on $M$ (cf. \cite{Bor}, \cite{Arv1}). 
Here $g_{\al}=g(E_{\al}, E_{-\al})\in \mathbb{R}^+$, where
$E_{\al}$ are the (normalized) root vectors defined by $\al\in R_{M}^+$.  
Note that the positive
numbers $g_{\al}$ satisfy $g_{\al}=g_{\be}$  
 if $\al|_{\fr{t}}=\be|_{\fr{t}}$, $(\al, \be\in R_{M}^{+})$.

The  1-form $\delta_{\fr{m}}$ is   called  {\it Koszul form}, 
and it depends on the choise of $R_{M}^+$.
It is well known that (cf.  \cite{B})  
\begin{equation}\label{delta}
\delta_{\fr{m}}=c_{i_{1}}\Lambda_{i_{1}}+\cdots +c_{i_{r}}\Lambda_{i_{r}},
\end{equation}
where $c_{i_{1}}>0, \ldots, c_{i_{r}}>0$, and $\Lambda_{i_{1}}, \ldots, \Lambda_{i_{r}}$ 
are the fundamental weights   corresponding to the simple roots of $\Pi_{M}$.
The positive coefficients $c_{i_{1}}, \ldots, c_{i_{r}}$ 
are known as {\it Koszul numbers}. For  flag manifolds corresponding to  
classical Lie groups 
Koszul numbers were computed in \cite[p.~176]{AP}  (see also \cite[p.~ 21]{AS}). 
 
Let  $R_{\fr{t}}^+=\{ \xi_1, \ldots,  \xi_{s}\}$, and  consider the decomposition  $\fr{m}=\fr{m}_1\oplus\cdots\oplus\fr{m}_s$, where 
 $\fr{m}_i=(\fr{m}_{\xi_i}\oplus  \fr{m}_{-\xi_i})^{\tau}$ with $1\leq i\leq s$.  
Then by  (\ref{KE}),   a K\"ahler-Einstein metric is expressed as
 \begin{equation}\label{ke}
 g=g_{\xi_1}\cdot (-B)|_{\fr{m}_1}+\cdots + g_{\xi_s}\cdot (-B)|_{\fr{m}_s},
 \end{equation}
 where the positive numbers $g_{\xi_i}$ are given by $g_{\xi_i}=(\delta_{\fr{m}}, \al)$  (up to a constant),
 and the complementary roots $\al\in R_{M}^+$ are such that
 $\al|_{\fr{t}}=\xi_i$.  Note that  it suffices to 
 work only with the  root $\al\in R_{M}^+$ which is the lowest weight of the 
 corresponding $\ad(\fr{k}^{\mathbb{C}})$-irreducible submodule
 $\fr{m}_{\xi_i}$ of $\fr{m}^{\mathbb{C}}$. 
 This is because of a bijection  between $\fr{t}$-roots $\xi$ 
 and lowest weights of the
 irreducible submodules $\fr{m}_{\xi}$
  (\cite{AP}, \cite{Ale1}).  The lowest weight  of 
 the irreducible submodule  $\fr{m}_{\xi_i}$ $(\xi_{i}\in R_{\fr{t}}^+)$ 
 is the positive complementary root $\al\in R_{M}^+$
 such that $\al-\phi\notin R$ for any $\phi\in R_{K}^+$.  Such roots are 
 also called {\it $K$-simple roots}. For example, all roots of $\Pi_{M}$ are  
 $K$-simple (\cite{AP}).

Flag manifolds 
$M=G/K$ of Type I  admit a unique K\"ahler-Einstein metric.
 
   \begin{theorem} \label{The3} \textnormal{(\cite{B})} 
Let $M=G/K$ be a generalized flag manifold
  of Type I, and let $B$ be the Killing form of $G$.  Then $M$ admits a 
unique $G$-invariant K\"ahler-Einstein metric given by
\[
\left\langle \ , \ \right\rangle = 1 \cdot (-B)|_{\fr{m}_1}+
2\cdot (-B)|_{\fr{m}_2}+3\cdot (-B)|_{\fr{m}_3}+4\cdot (-B)|_{\fr{m}_4}
\]
\end{theorem}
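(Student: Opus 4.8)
The plan is to combine the explicit description of the K\"ahler-Einstein metric in (\ref{ke})--(\ref{delta}) with the fact that for Type I the set $\Pi_M$ consists of a \emph{single} black root. Write $\Pi_M=\{\al_{i_0}\}$ with $\Hgt(\al_{i_0})=4$. By Proposition \ref{classification} the positive $\fr{t}$-roots are $R_{\fr{t}}^+=\{\overline{\al}_{i_0}, 2\overline{\al}_{i_0}, 3\overline{\al}_{i_0}, 4\overline{\al}_{i_0}\}$, so we may index the summands so that $\fr{m}_k$ corresponds to the $\fr{t}$-root $k\overline{\al}_{i_0}$ $(k=1,2,3,4)$. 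Uniqueness is then immediate: painting a single node black gives a subgroup $K$ with one-dimensional center, so by the discussion preceding the theorem $M$ carries a unique $G$-invariant complex structure, hence (up to homothety) a unique compatible K\"ahler-Einstein metric. It remains to compute its four components.

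First I would simplify the Koszul form via (\ref{delta}). Since $\Pi_M=\{\al_{i_0}\}$ is a singleton, the sum reduces to one term, $\delta_{\fr{m}}=c_{i_0}\Lambda_{i_0}$ with $c_{i_0}>0$. Next, by (\ref{ke}) the component $g_{\xi_k}$ along $\fr{m}_k$ equals $(\delta_{\fr{m}}, \al)$ for any $\al\in R_M^+$ with $\al|_{\fr{t}}=k\overline{\al}_{i_0}$; following the remark before the theorem it is convenient to take $\al$ to be the $K$-simple (lowest weight) root of $\fr{m}_{\xi_k}$. Expanding $\al=\sum_j n_j\al_j$ in simple roots, formula (\ref{troots}) gives $\kappa(\al)=n_{i_0}\overline{\al}_{i_0}$, so the condition $\al|_{\fr{t}}=k\overline{\al}_{i_0}$ forces the coefficient of the black root to be exactly $n_{i_0}=k$.

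The key step is the evaluation of $(\delta_{\fr{m}}, \al)$. Using the defining relation $2(\Lambda_{i_0}, \al_j)/(\al_j, \al_j)=\delta_{i_0 j}$ of the fundamental weights, every simple root other than $\al_{i_0}$ is orthogonal to $\Lambda_{i_0}$, so the inner product collapses to a single surviving term:
\[
g_{\xi_k}=(\delta_{\fr{m}}, \al)=c_{i_0}\,n_{i_0}\,(\Lambda_{i_0}, \al_{i_0})=c_{i_0}\,k\,\frac{(\al_{i_0}, \al_{i_0})}{2}.
\]
Hence $g_{\xi_k}$ is proportional to $k$, with proportionality constant $c_{i_0}(\al_{i_0}, \al_{i_0})/2$ independent of $k$. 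Normalizing the metric by this constant yields $(g_{\xi_1}, g_{\xi_2}, g_{\xi_3}, g_{\xi_4})=(1,2,3,4)$, as claimed. The only real obstacle is the identification $n_{i_0}=k$, that is, that the coefficient of the black root in each $K$-simple root equals the level of the corresponding $\fr{t}$-root; but this is precisely the content of (\ref{troots}), so once the single-black-root structure is exploited the proof reduces to the one-line inner-product computation above.
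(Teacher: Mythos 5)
Your proposal is correct and follows exactly the route the paper intends: the paper itself only cites \cite{B} for Theorem \ref{The3} without writing out a proof, but the machinery of Section 3 (the Koszul form (\ref{delta}), the formula (\ref{ke}) for $g_{\xi_i}=(\delta_{\fr{m}},\al)$, and the orthogonality relation (\ref{fundamental})) specializes, for a single black root of height 4, to precisely your computation $g_{\xi_k}=c_{i_0}\,k\,(\al_{i_0},\al_{i_0})/2$, and the uniqueness argument via the one-dimensional center is the one the paper invokes before Proposition \ref{chambers}. This is the same argument the paper carries out in detail for Type II in Theorem \ref{The4}, so nothing further is needed.
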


It follows from Theorem \ref{The2} that flag manifolds of Type II  
admit four (up to scale) K\"ahler-Einstein metrics.   
 We will find one such metric by using (\ref{ke}), and  
 the other three K\"ahler-Einstein metrics can be obtainded by a similar method.

\begin{theorem}\label{The4}
Let $M=G/K$ be a generalized flag manifold
of Type IIa (resp. Type IIb) given in Table 4.  Let $J$ (resp. $J'$) be the $G$-invariant complex structure  corresponding to  
 the natural invariant ordering  induced by the $\fr{t}$-chamber $C_2$  (resp. $C'_2$) (cf. Theorem \ref{The2}).
 Then  the metric $g=(x_1, x_2, x_3, x_4)$  given below  is (up to scalar) a 
 $G$-invariant K\"ahler-Einstein metric  
 on $M$ with respect to $J$ (resp. to $J'$).
 \begin{eqnarray*}
 M=G/K \ \mbox{of Type IIa}    &:&   \left\{ \begin{tabular}{ll} 
 $  SO(2\ell+1)  :$ & $ g=(1, \ \ell-3/2, \ \ell-1/2, \ 2\ell-2)$ \\ 
$     SO(2\ell)_{(i)} :$  & $g=(1, \ \ell-2, \ \ell-1, \ 2\ell-3)$  \\
$    E_6 :$  &  $g=(1,\  4,\ 5,\ 9)$  \\
 $  E_7 :$ & $g=(1,\  6,\ 7,\ 13)$ 
 \end{tabular}\right. \\
M=G/K \ \mbox{of Type IIb} &:&   \left\{  \begin{tabular}{ll} 
   $    Sp(\ell)  :$ & $g= ( \ell/{2}, \  \ell-p+1, \  (3\ell/2)-p+1, \ 2\ell-p+1)$  \\
 $   SO(2\ell)_{(ii)}     :$ & $ g= (\ell/2, \ \ell-p-1, \ (3\ell/2)-p-1, \ 2\ell-p-1)$
 \end{tabular}\right.
\end{eqnarray*}
  \end{theorem}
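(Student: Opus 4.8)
The plan is to use the explicit description of the compatible K\"ahler--Einstein metric furnished by (\ref{KE})--(\ref{ke}). By Theorem \ref{The2} the complex structure $J$ (resp. $J'$) attached to the chamber $C_2$ (resp. $C_2'$) is the canonical one, induced by the natural invariant ordering $R_M^+ = R^+\setminus R_K^+$ of (\ref{Eq: roots}). For this ordering (\ref{ke}) produces, up to an overall constant, a K\"ahler--Einstein metric whose components are $g_{\xi_i} = (\delta_{\fr{m}}, \al)$, where $\delta_{\fr{m}} = \tfrac12\sum_{\al\in R_M^+}\al$ is the Koszul form and $\al\in R_M^+$ is any complementary root with $\al|_{\fr{t}} = \xi_i$ (the value depends only on $\xi_i$, since $\delta_{\fr{m}}\in\fr{t}^*$). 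Because the Weyl basis is normalized by $B(E_\al, E_{-\al}) = -1$, the metric (\ref{metrI}) satisfies $\langle E_\al, E_{-\al}\rangle = x_i$, so the numbers to be determined are exactly $x_i = (\delta_{\fr{m}}, \al)$.

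First I would compute $\delta_{\fr{m}}$. For the classical families it is cleanest to work directly in the coordinates of Proposition \ref{classification}: one sums the positive complementary roots $R_M^+$ to obtain $\delta_{\fr{m}}$ as an explicit vector, and then pairs it with a chosen representative of each of the four $\fr{t}$-roots $\xi_1,\dots,\xi_4$. For the exceptional cases $E_6, E_7$ I would instead use (\ref{delta}), writing $\delta_{\fr{m}} = c_1\Lambda_1 + c_2\Lambda_2$ with Koszul numbers $(c_1,c_2)=(1,4)$ for $E_6$ and $(1,6)$ for $E_7$, and exploiting $2(\Lambda_i,\al_j)/(\al_j,\al_j) = \delta_{ij}$; since these algebras are simply laced, the four values reduce to $(c_1 : c_2 : c_1+c_2 : c_1+2c_2)$, giving $(1,4,5,9)$ and $(1,6,7,13)$ as listed.

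For a classical Type IIb example such as $Sp(\ell)$, with $\Pi_M = \{\al_p, \al_\ell\}$ and $K = U(p)\times U(\ell-p)$, summing $R_M^+$ in the standard coordinates for $C_\ell$ yields $\delta_{\fr{m}} = \tfrac12\big[(2\ell-p+1)\sum_{i\le p}e_i + (\ell-p+1)\sum_{j>p}e_j\big]$; the four $\fr{t}$-modules correspond to $\overline{\al}_p,\ \overline{\al}_\ell,\ \overline{\al}_p+\overline{\al}_\ell,\ 2\overline{\al}_p+\overline{\al}_\ell$, with representatives $e_i-e_j$ ($i\le p<j$), $2e_j$ ($j>p$), $e_i+e_j$ ($i\le p<j$), $2e_i$ ($i\le p$). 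Pairing against $\delta_{\fr{m}}$ produces $\ell/2,\ \ell-p+1,\ 3\ell/2-p+1,\ 2\ell-p+1$, exactly the stated tuple. The Type IIa computation for $SO(2\ell+1)$ is entirely analogous: there $\delta_{\fr{m}} = (\ell-\tfrac12)e_1^1 + (\ell-\tfrac32)e_1^2$, and pairing with $\al_1, \al_2, \al_1+\al_2$ and the highest root $e_1^1+e_1^2$ gives $(1,\ \ell-3/2,\ \ell-1/2,\ 2\ell-2)$; the two $SO(2\ell)$ cases follow the same recipe verbatim.

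The main subtlety lies in the non-simply-laced groups $B_\ell = SO(2\ell+1)$ and $C_\ell = Sp(\ell)$: there the simple roots in $\Pi_M$ may have different lengths (for $C_\ell$ the root $\al_\ell = 2e_\ell$ is long), so the long-root representative $2e_j$ contributes a length factor responsible for the asymmetric entries, and one must keep the coordinate lengths intact rather than normalizing all roots to a common value. I would therefore carry out the summation and pairing in full for one representative of each type ($SO(2\ell+1)$ and $Sp(\ell)$) and indicate that the remaining families are identical in form. Finally, all four numbers are determined only up to the common scalar $c$ of (\ref{KE}), which is precisely the ``up to scalar'' freedom in the statement; the remaining three K\"ahler--Einstein metrics promised by Theorem \ref{The2} are obtained by repeating the argument with the orderings of the chambers $C_1, C_3, C_4$ (resp. $C_1', C_3', C_4'$).
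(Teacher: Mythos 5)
Your proposal is correct and follows essentially the same route as the paper: both rest on formula (\ref{ke}), computing the Koszul form $\delta_{\fr{m}}$ and pairing it with a representative complementary root for each $\fr{t}$-root module via (\ref{fundamental}). The only cosmetic difference is that you compute $\delta_{\fr{m}}$ for the classical families directly in coordinates (your $Sp(\ell)$ and $SO(2\ell+1)$ sums check out), whereas the paper quotes the classical Koszul numbers from \cite{AS} and carries out the explicit Cartan-matrix computation only for the exceptional case $E_6$.
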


\begin{proof}
     The Koszul form is given as follows: 	
     \[
\begin{tabular}{lll}
 $  G=SO(2\ell+1)$  & :  & $\delta_{\fr{m}}=\Lambda_1+(\ell-3/2)\Lambda_2$ \\
 $ G=SO(2\ell)_{(i)}$    & :  & $\delta_{\fr{m}}=\Lambda_1+(\ell-2)\Lambda_2$ \\
  $  G=Sp(\ell)$  & :  & $\delta_{\fr{m}}=(\ell/{2})\Lambda_{p}+((\ell-p+1)/2)\Lambda_{\ell}$ \\
 $  G=SO(2\ell)_{(ii)}$    & :  & $\delta_{\fr{m}}=(\ell/{2})\Lambda_{p}+(\ell-p-1)\Lambda_{\ell}$  \\
 $G=E_6$ & : & $ \delta_{\fr{m}}=\Lambda_1+4\Lambda_2$\\
 $G=E_7$ & : & $ \delta_{\fr{m}}=\Lambda_1+6\Lambda_2$ 
 \end{tabular}
\]
The first four has been computed in \cite[p.~ 21]{AS}. We will give a computation for 
    the exceptional Lie group 
  $E_{6}$ and  the case of $E_7$ can be treated similarly. 
 We follow  the notation of Proposition \ref{classification}.
Let $M=G/K=E_6/SU(5)\times U(1)\times U(1)$ with $\Pi_{M}=\{\al_1, \al_2\}$, 
and $\Pi_{K}=\{\al_3, \al_4, \al_5, \al_6\}$. 
 According to (\ref{delta}) it is $\delta_{\fr{m}}=c_1\Lambda_1+c_2\Lambda_2$. 
By using the natural invariant ordering $R_{M}^+$ given in  (\ref{E6}) we easily obtain that 
\begin{equation}\label{dE}
2\delta_{\fr{m}}=\sum_{\al\in R_{M}^+}\al=16 \al_1+30 \al_2+36 \al_3+24 \al_4+12 \al_5+18 \al_6.
\end{equation}
The Cartan matrix $A=(A_{ij}) =(2(\al_i, \al_j)/(\al_j, \al_j))$ of $E_6$  (with respect to the   base 
$\Pi=\{\al_1, \ldots, \al_6\}$) is given by (cf. \cite[p.~ 82]{Sam})
 \[
A =  \left( \begin{tabular}{cccccc}
 2  & -1 &  0 &  0 &  0 &  0 \\
-1  &  2 & -1 &  0 &  0 &  0 \\
 0  & -1 &  2 & -1 &  0 & -1 \\
 0  &  0 & -1 &  2 & -1 &  0 \\
 0  &  0 & 0 &-1 & 2 & 0 \\
 0 & 0 & -1 & 0 & 0 & 2
\end {tabular}
\right).   
\]
Recall  that  the Cartan matrix $A=(A_{ij})$  depends on the enumeration of the system of simple roots $\Pi$, and it establishes the relation between the simple roots and the
fundamental weights. In particular, it is 
$\al_{i}=\sum_{j=1}^{6}A_{ij}\Lambda_{j}$, where $\Lambda_j$ $(j=1, \ldots, 6)$ are the fundamental weights of $E_6$.
 By using this remark  and  relation (\ref{dE}), we easily obtain that
$ \delta_{\fr{m}}=\Lambda_1+4\Lambda_2$, that is $c_1=1$ and $c_2=4$. 

We now proceed to the computation of the corresponding K\"ahler-Einstein metric.  From Proposition \ref{classification} we have that
$R_{\fr{t}}^{+}= \{\overline{\al}_1, \overline{\al}_2, 
\overline{\al}_1+\overline{\al}_2, \overline{\al}_1+2\overline{\al}_2\}$, thus  according to (\ref{ke})  the K\"ahler-Einstein metric $g$ 
which corresponds to the invariant complex structure $J$ induced by $R_{M}^+=R^+\backslash R_{K}^+$, is given by
\[
g=g_{\overline{\al}_1}\cdot (-B)|_{\fr{m}_1}+g_{\overline{\al}_2}\cdot (-B)|_{\fr{m}_2}+g_{(\overline{\al}_1+\overline{\al}_2)}\cdot (-B)|_{\fr{m}_3}
+g_{(\overline{\al}_1+2\overline{\al}_2)}\cdot (-B)|_{\fr{m}_4},
\]
where $\fr{m}_i$ $(i=1, \ldots, 4)$ are given by (\ref{e66}).
It follows from (\ref{E6}) that the lowest weights of $\fr{m}_1, \fr{m}_2, \fr{m}_3$ and $\fr{m}_4$ (or the corresponding  $K$-simple roots) are the complementary roots
$\al_1, \al_2, \al_1+\al_2,$ and $\al_1+2\al_2+2\al_3+\al_4+\al_6$, respectively.  By using the relation (cf. \cite[p.~ 174]{AP})
\begin{equation}\label{fundamental}
2(\Lambda_{i}, \al_{j})/(\al_j, \al_j)=\left\{
\begin{tabular}{lc}
$\delta_{ij},$ & $\mbox{if} \ \ \al_j\in \Pi_{M}$ \\
$0,$   & $\mbox{if} \ \ \al_j\in \Pi_{K}$
\end{tabular}\right.
\end{equation}
we easily obtain that  
\begin{eqnarray*}
 g_{\overline{\al}_1} &=& (\Lambda_1+4\Lambda_2, \al_1) = (\Lambda_1, \al_1) = (\al_1, \al_1)/2 =1/2, \\
 g_{\overline{\al}_2} &=& (\Lambda_1+4\Lambda_2, \al_2) = 4(\Lambda_2, \al_2) = 2(\al_2, \al_2)=2, \\
 g_{(\overline{\al}_1+\overline{\al}_2)} &=& (\Lambda_1+4\Lambda_2,\al_1+\al_2) = (\Lambda_1, \al_1)+4(\Lambda_2, \al_2)=5/2, \\
 g_{(\overline{\al}_1+2\overline{\al}_2)} &=&  (\Lambda_1+4\Lambda_2, \al_1+2\al_2+2\al_3+\al_4+\al_6) \\
 &=& (\Lambda_1, \al_1)+8(\Lambda_2, \al_2) =  (\al_1, \al_1)/2+4(\al_2, \al_2)= 9/2,
  \end{eqnarray*}
 where we have set $(\al_i, \al_i)=1$ for $i=1, \ldots, 6$ (recall that all simple roots in $E_6$ have the same length).
 Thus an invariant K\"ahler-Einstein metric on $M$ is given (up to a constant) by $g =(1/2, 2, 5/2, 9/2)$, and if we normalize,
  we obtain the metric stated in the theorem.
 
 The other K\"ahler-Einstein metrics presented in  the theorem are obtained by a similar procedure.
 \end{proof}

\markboth{Andreas Arvanitoyeorgos and Ioannis Chrysikos}{Homogeneous Einstein metrics  on  flag manifolds  with four isotropy summands}
\section{Invariant Einstein metrics on generalized flag manifolds of Type I}
\markboth{Andreas Arvanitoyeorgos and Ioannis Chrysikos}{Homogeneous Einstein metrics  on  flag manifolds  with four isotropy summands}

In this section we will find all $G$-invariant Einstein metrics
for generalized flag manifolds $M=G/K$ of Type I. We will use Proposition \ref{Ricc}
and the notation of Section 1 to obtain  the Einstein equation. The  spaces 
 of Type II will be examined in Section 5.

 \begin{prop}\label{componentsI}
Let $M=G/K$ be a  generalized flag manifold of Type I.
Then the components $r_{i}=\Ric|_{\fr{m}_i}$ of the Ricci tensor of a $G$-invariant Riemannian  metric (\ref{metrI}) on $M$
are given by 
\begin{equation}\label{compI}
\left\{ 
\begin{tabular}{l}
$r_1=\displaystyle\frac{1}{2x_1}-\frac{c_{11}^{2}}{2d_1}\frac{x_2}{x_1^2}+
\frac{c_{12}^{3}}{2d_1}\Big(\frac{x_1}{x_2x_3}-\frac{x_2}{x_1x_3}-\frac{x_3}{x_1x_2}\Big)+
\frac{c_{13}^{4}}{2d_1}\Big(\frac{x_1}{x_3x_4}-\frac{x_3}{x_1x_4}-\frac{x_4}{x_1x_3}\Big)$ \\
$r_2=\displaystyle\frac{1}{2x_2}-\frac{c_{22}^{4}}{2d_2}\frac{x_4}{x_2^2}+
\frac{c_{11}^{2}}{4d_2}\Big(\frac{x_2}{x_1^2}-\frac{2}{x_2}\Big)+
\frac{c_{12}^{3}}{2d_2}\Big(\frac{x_2}{x_1x_3}-\frac{x_1}{x_2x_3}-\frac{x_3}{x_1x_2}\Big)$ \\
$r_3=\displaystyle\frac{1}{2x_3}+
\frac{c_{12}^{3}}{2d_3}\Big(\frac{x_3}{x_1x_2}-\frac{x_2}{x_1x_3}-\frac{x_1}{x_2x_3}\Big)+
\frac{c_{13}^{4}}{2d_3}\Big(\frac{x_3}{x_1x_4}-\frac{x_1}{x_3x_4}-\frac{x_4}{x_1x_3}\Big)$ \\
$r_4=\displaystyle\frac{1}{2x_4}+
\frac{c_{22}^{4}}{4d_4}\Big(\frac{x_4}{x_2^2}-\frac{2}{x_4}\Big)+
\frac{c_{13}^{4}}{2d_4}\Big(\frac{x_4}{x_1x_3}-\frac{x_1}{x_3x_4}-\frac{x_3}{x_1x_4}\Big)$, \\
\end{tabular}\right. 
\end{equation}
where $ c_{11}^{2}=[112], c_{12}^{3}=[123], c_{13}^{4}=[134]$, and $c_{22}^{4}=[224]$.
\end{prop}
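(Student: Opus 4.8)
The plan is to specialize the general Ricci formula of Proposition \ref{Ricc} to the case $s=4$, so that the only genuine work is to determine which of the symbols $[ijk]$ survive. Since $M$ is of Type I, there is a single simple $\fr{t}$-root $\overline{\al}$, and the four irreducible submodules are indexed by the positive $\fr{t}$-roots $\overline{\al},2\overline{\al},3\overline{\al},4\overline{\al}$; that is, $\fr{m}_i$ corresponds to $i\,\overline{\al}$ for $i=1,2,3,4$ (cf. the proof of Proposition \ref{classification}). I would first record the bracket relations dictated by the $\fr{t}$-root addition rule: because $[\fr{m}_\xi,\fr{m}_\eta]\subseteq\fr{m}_{\xi+\eta}$ and $\fr{m}_\zeta=0$ whenever $\zeta$ is neither a $\fr{t}$-root nor $0$, a structure constant $A_{\al\be}^{\gamma}$ with $X_\al\in\fr{m}_i$, $X_\be\in\fr{m}_j$, $X_\gamma\in\fr{m}_k$ can be nonzero only when $\pm i\,\overline{\al}\pm j\,\overline{\al}=\pm k\,\overline{\al}$, i.e. when $i+j=k$ or $|i-j|=k$. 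Running through all unordered triples in $\{1,2,3,4\}$ and using that $5\overline{\al},6\overline{\al},\dots$ are not $\fr{t}$-roots, the only admissible triples are $\{1,1,2\}$, $\{1,2,3\}$, $\{1,3,4\}$ and $\{2,2,4\}$. Hence, by the total symmetry of $[ijk]$, the only nonzero symbols are $[112]$, $[123]$, $[134]$ and $[224]$, abbreviated $c_{11}^{2}$, $c_{12}^{3}$, $c_{13}^{4}$, $c_{22}^{4}$.

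The next step is to substitute these four quantities into (\ref{ricc}) for each $k=1,2,3,4$ and expand the two double sums $\frac{1}{4d_k}\sum_{i,j}\frac{x_k}{x_ix_j}[ijk]$ and $\frac{1}{2d_k}\sum_{i,j}\frac{x_j}{x_kx_i}[kij]$. For each fixed $k$ one lists exactly which ordered pairs $(i,j)$ contribute: the pairs with $i\neq j$ occur twice (which converts the prefactor $\tfrac{1}{4d_k}$ into $\tfrac{1}{2d_k}$), whereas a pair with $i=j$ occurs only once. This distinction is precisely what produces the asymmetric coefficients in $r_2$ and $r_4$: in $r_2$ the symbol $[112]$ enters through the single self-pair $(i,j)=(1,1)$, giving $\frac{c_{11}^{2}}{4d_2}\bigl(\frac{x_2}{x_1^2}-\frac{2}{x_2}\bigr)$, while in $r_4$ the symbol $[224]$ enters through $(i,j)=(2,2)$, giving $\frac{c_{22}^{4}}{4d_4}\bigl(\frac{x_4}{x_2^2}-\frac{2}{x_4}\bigr)$. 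After collecting terms and cancelling the monomial that appears with opposite signs in the two sums (for instance the $\tfrac{c_{11}^{2}}{x_2}$ contribution in $r_1$), the four displayed expressions for $r_1,r_2,r_3,r_4$ fall out directly.

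The only real obstacle is the routine but error-prone index bookkeeping in the double sums, namely keeping track of which ordered pairs feed $[ijk]$ versus $[kij]$ for each fixed $k$ and carrying the factors of two correctly so that the coefficients $\tfrac14$ and $\tfrac12$ emerge as claimed. No further structural input is required; once the list of nonzero triples from the first step is in hand, the proposition is a purely mechanical consequence of Proposition \ref{Ricc}.
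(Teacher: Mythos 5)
Your proof is correct and follows essentially the same route as the paper: identify the non-vanishing triples $[ijk]$ and then specialize Proposition \ref{Ricc} by the index bookkeeping you describe, including the cancellation in the mixed terms and the surviving self-pairs that produce the $\tfrac{1}{4d_k}$ coefficients in $r_2$ and $r_4$. The only difference is that you derive the admissible triples $\{1,1,2\}$, $\{1,2,3\}$, $\{1,3,4\}$, $\{2,2,4\}$ directly from the $\fr{t}$-root grading $[\fr{m}_\xi,\fr{m}_\eta]\subseteq\fr{m}_{\xi+\eta}$ together with $R_{\fr{t}}^+=\{\overline{\al},2\overline{\al},3\overline{\al},4\overline{\al}\}$, whereas the paper quotes the equivalent bracket relations from the literature; your justification is self-contained and yields exactly the same list.
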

\begin{proof}
It  suffices to show that the only non-zero triples $[ijk]$ are 
those given above. This is an immediate consequense of the
  following relations (cf. \cite{It}) 
\begin{equation}\label{relations}
\left.
\begin{tabular}{llll}
$ [\fr{m}_1, \fr{m}_1]\subset\fr{k}\oplus\fr{m}_2,$ & 
$[\fr{m}_2, \fr{m}_2]\subset\fr{k}\oplus\fr{m}_4,$ & 
$[\fr{m}_3, \fr{m}_3]\subset\fr{k},$ & 
$[\fr{m}_4, \fr{m}_4]\subset\fr{k},$ \\
$[\fr{m}_1, \fr{m}_2]\subset\fr{m}_1\oplus\fr{m}_3,$ &  
$[\fr{m}_1, \fr{m}_3]\subset\fr{m}_2\oplus\fr{m}_4,$ & 
$[\fr{m}_1, \fr{m}_4]\subset\fr{m}_3,$ & \\
$[\fr{m}_2, \fr{m}_3]\subset\fr{m}_1,$ & 
$ [\fr{m}_2, \fr{m}_4]\subset\fr{m}_2,$ & 
$[\fr{m}_3, \fr{m}_4]\subset\fr{m}_1,$ &
\end{tabular}\right\} 
\end{equation}
and the fact that
$B(\fr{m}_i, \fr{m}_j)=B(\fr{m}_i, \fr{k})=0$.  Then the result  is a 
direct application of Proposition \ref{Ricc}.
\end{proof}

 The real dimensions   of the real irreducible 
$\ad(\fr{k})$-submodules $\fr{m}_i$, $(i=1,\ldots, 4)$ of $\fr{m}$ (corresponding to the positive $\fr{t}$-root $\xi_i\in R_{\fr{t}}^+$) 
can be obtained by  various  methods. 
 One such was indicated in Remark \ref{rem}, and uses the 
 explicit description of $R_{M}$.  Another method uses the  Weyl dimensional formula  (cf. \cite[p.~94]{GOV}, \cite[p.~122]{Sam}), which in our case is given by 
 \[
\dim_{\mathbb{C}}\fr{m}_{i}= \prod_{\al\in R_{K}^{+}}\Big(1+\frac{(\lambda_{i}, \al)}{(\delta_{K},\al)}\Big).
\]
Here,   $\lambda_i$ is  the corresponding 
highest weight of the $\ad(\fr{k})$-submodule $\fr{m}_i$, and $\delta_{K}=\frac{1}{2}\sum_{\be\in R_{K}^{+}}\be$  where $R_{K}^+$ denotes the positive root system of the isotropy  group $K$ corresponding to
 the flag manifold $M=G/K$.
The highest weight $\lambda_i$ 
is the  positive complementary root $\al=\sum_{i=1}^{\ell} c_{i}\al_{i}\in R_{M}^+$, such that $\kappa(\al)=\xi_i$, and $c_{i}\geq c_{i}'$ whenever $\al'=\sum_{i=1}^{\ell}c_{i}'\al_{i}\in R_{M}^+$ with $\kappa(\al')=\xi_{i}$.     
A useful method to work with the above formula is  to express the highest weight  $\lambda_i$ in terms of the fundmental weights  by using the  Cartan matrix of $\fr{g}$, and then use relation (\ref{fundamental}).  Therefore, we obtain the following table:

\bigskip
   \begin{center}
  \begin{tabular}{|l|l|l|l|l|}
\hline 
$G$ & $d_1$ & $d_2$ & $d_3$ & $d_4$\\
\hline 
$F_4$ & $12$ & $18$ & $4$ & $6$\\
\hline
$E_7$ & $48$ & $36$ & $16$ & $6$\\
\hline 
${E_8}_{(i)}$ & $96$ & $60$ & $32$ & $6$\\
\hline
${E_8}_{(ii)}$ & $84$ & $70$ & $28$ & $14$\\
\hline
\end{tabular}
 \end{center}
 \smallskip
 
 \begin{center}
 {\sc Table 5.} \ The  dimensions of the isotropy submodules for $M=G/K$ of Type I. 
 \end{center} 
 
\medskip
 A $G$-invariant invariant  metric 
 $g=(x_1, x_2, x_3, x_4)$ on  a flag manifold $M=G/K$ of Type I is Einstein if and only if,  there is a positive 
 constant $e$  such that $r_1=e, r_2=e, r_3=e, r_4=e,$
 or equivalently,
\begin{equation}\label{systemI}
   r_1-r_2=0, \quad r_2-r_3=0, \quad r_3-r_4=0, 
\end{equation}
where $r_{i} \ (i=1, \ldots, 4)$ are determined by Proposition \ref{componentsI}.

In order to solve system (\ref{systemI}), first we need   to determine the non zero triples $c_{11}^{2}, c_{12}^{3},  c_{13}^{4}$, and $c_{22}^{4}$ of Proposition \ref{componentsI}.  
By Theorem  \ref{The3} the metric $x_1=1, x_2=2, x_3=3$, $x_4=4$ is K\"ahler-Einstein, thus by using (\ref{compI}) and Table 5 we obtain 
the following:
\begin{eqnarray}
 F_4 &:& c_{11}^{2}=c_{22}^{4},  \ c_{12}^{3}=(4-c_{22}^{4})/2, \ c_{13}^{4}=(10-3c_{22}^{4})/6,  \label{eq1}\\ 
 E_7 &:& c_{11}^{2}=6+c_{22}^{4}, \ c_{12}^{3}=(10-c_{22}^{4})/2, \ c_{13}^{4}=(14-3c_{22}^{4})/6, \label{eq2} \\
 {E_{8}}_{(i)} &:& c_{11}^{2}=14+c_{22}^{4}, \  c_{12}^{3}=(18-c_{22}^{4})/2,  \ c_{13}^{4}=(26-5c_{22}^{4})/10, \label{eq3}\\
  {E_{8}}_{(ii)} &:& c_{11}^{2}=(28+3c_{22}^{4})/3, \  c_{12}^{3}=(56-3c_{22}^{4})/6,  \ c_{13}^{4}=(154-15c_{22}^{4})/30. \label{eq4}
\end{eqnarray}

 In order to compute $c_{22}^{4}=[224]$, we  use the twistor fibration of generalized flag manifolds  $M=G/K$  over   symmetric spaces $G/U$ (\cite[p.~48]{Bur}, \cite{Bur1}).

Let $M=G/K$ be a generalized flag manifold of Type I, 
with reductive decomposition $\fr{g}=\fr{k}\oplus\fr{m}$, 
where $\fr{m}=\fr{m}_1\oplus\fr{m}_2\oplus\fr{m}_3\oplus\fr{m}_4$.  
Set $\fr{u}=\fr{k}\oplus\fr{m}_2\oplus\fr{m}_4$  and 
$\fr{f}=\fr{m}_1\oplus\fr{m}_3$.
Relations (\ref{relations}) imply that
\[
[\fr{u}, \fr{u}]\subset\fr{u},  \qquad [\fr{u}, \fr{f}]\subset 
\fr{f}, \qquad [\fr{f}, \fr{f}]\subset\fr{u},
\]
therefore $\fr{u}$ is a subalgebra of $\fr{g}$,  $\fr{g}=\fr{u}\oplus\fr{f}$ is a reductive decomposition of $\fr{g}$, and  the pair $(\fr{g}, \fr{u})$ 
is an irreducible symmetric pair. This determines an irreducible symmetric space  of compact
type  $G/U$, where $U$ is the connected Lie subgroup of $G$ 
corresponding to $\fr{u}$. Since $\fr{k}\subset\fr{u}$, 
we obtain  the natural fibration 
$\pi : G/K\to G/U$, $gK\mapsto gU$ with fiber  $U/K$.

By using the classification of irreducible symmetric spaces 
(e.g. \cite[p.~518]{Hel} or \cite[p.~201]{Be}) and by comparing dimensions, 
we can determine explicity the Lie algebra $\fr{u}$ 
and the associated symmetric space $G/U$.  These are given in the following table:

\medskip
\begin{center}  
 \begin{tabular}{|l|l|l|}
\hline
$G$ & $\fr{u}$ & $\mbox{symmetric space} \ G/U$ \\
\hline\hline
$F_4$ & $\fr{so}(9)$ & $F_4/SO(9)$ \\
$E_7$ & $\fr{so}(12)\oplus \fr{su}(2)$ & $E_7/SO(12)\times SU(2)$\\
${E_8}_{(i)}$ & $\fr{so}(16)$ & $E_8/SO(16) $\\
${E_8}_{(ii)}$ & $\fr{e}_7\oplus\fr{su}(2)$ & $E_8/E_7\times SU(2)$\\
\hline
\end{tabular}
\end{center}

\smallskip
\begin{center}
 {\sc Table 6.} \ The  twistor fibration $\pi: G/K\to G/U$ for flag manifolds of type I. 
 \end{center}
 
 \medskip

Note that on the fiber $U/K$  the Lie group $U$ may 
not act (almost) effectively, 
that is  $\fr{u}$ and $\fr{k}$ may have non-trivial 
(non-discrete) ideals in common.
Let $U'$ be the normal  subgroup of $U$ which acts 
(almost) effectively
on the fiber $U/K$, with isotropy group $K'$. Then $U/K=U'/K'$ (\cite[p.~179]{Be}).  
In our  case the fibers $U'/K'$  are the spaces $SO(9)/U(3)\times SO(3)$, 
$SO(12)/U(3)\times SO(6)$, $SO(16)/U(3)\times SO(10)$, 
and $E_7/SU(7)\times U(1)$  respectively.
These    are generalized flag manifolds with two 
isotropy summands (see \cite{Chry}),
so we can easily compute the triples $(c_{22}^{4})'$ 
for the spaces $U'/K'$.
  
Let $B_{G}=B$ 
and $B_{U'}$ denote the Killing forms  of $G$ and $U'$ respectively, and 
let $\fr{u}'=\fr{k}'\oplus\fr{m}'$ be a 
 reductive decomposition of $\fr{u}'$ with respect to 
$B_{U'}$, where $\fr{m}'\cong T_{eK'}(U'/K')$.  
Then it is evident that $\fr{m}'=\fr{m}_2\oplus\fr{m}_4$,
and $\fr{m}'$ is a linear subspace of $\fr{m}=\oplus_{k=1}^{4}\fr{m}_k$.  
In \cite{Chry} it was shown that 
$(c_{22}^{4})'=\displaystyle\frac{d_2d_4}{d_2+4d_4}$, 
so by using Table 5 we obtain the following values:
\begin{equation}\label{Kill1}
F_4: \ (c_{22}^{4})'=18/7, \quad  E_7 : \ (c_{22}^{4})'=18/5, 
\quad {E_{8}}_{(i)} : \ (c_{22}^{4})'=30/7, \quad {E_{8}}_{(ii)} : \ (c_{22}^{4})'=70/9.
\end{equation}
Since $U'\subset G$ is a simple Lie subgroup of $G$,
there is a positive number $c$ such that $B_{U'}=c\cdot B_{G}$ 
(cf. \cite[p.~260]{Be}). In particular,  we obtain 
the following (cf. Appendix of \cite{Brb}):
\begin{equation}\label{Kill2}
\begin{tabular}{llll}
$ F_4 :$ &  $c=\displaystyle\frac{B_{SO(9)}}{B_{F_4}}=14/18,$ 
& $ E_7:$ & $c=\displaystyle\frac{B_{SO(12)}}{B_{E_7}}=20/36,$\\
$ {E_{8}}_{(i)}:$ & $c=\displaystyle\frac{B_{SO(16)}}{B_{E_8}}=28/60,$ 
& ${E_{8}}_{(ii)}:$ & $c=\displaystyle\frac{B_{E_7}}{B_{E_8}}=36/60.$
\end{tabular}
\end{equation}
The relation between the triples $(c_{22}^{4})'$ and 
$c_{22}^{4}$, for the flag manifolds $U'/K'$ and $G/K$  respectively, 
is now given as follows:
\begin{lemma}\label{Lem1}
Let $c>0$ defined by $B_{U'}=c\cdot B_{G}$. 
Then $c_{22}^{4}=c\cdot (c_{22}^{4})'$.
\end{lemma}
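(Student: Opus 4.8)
The plan is to read off both triples directly from the definition of $[ijk]$ in terms of an adapted orthonormal basis and structure constants, and then to observe that the two computations — the one for $G/K$ and the one for $U'/K'$ — differ \emph{only} by the global rescaling $B_{U'}=c\cdot B_{G}$ that is used to normalize the basis. The single structural fact that makes this work is that $\fr{u}'$ is a subalgebra of $\fr{g}$, so the bracket appearing in the $U'/K'$ computation is literally the restriction of the bracket of $\fr{g}$, together with the observation that $\fr{m}_2$ and $\fr{m}_4$ are common subspaces of $\fr{m}'=T_{eK'}(U'/K')$ and of $\fr{m}=T_oM$. Since $B_{U'}=c\,B_{G}$, orthogonality with respect to $-B_{U'}$ and to $-B_{G}$ coincide, which is precisely what legitimizes the identification $\fr{m}'=\fr{m}_2\oplus\fr{m}_4$ in the first place.

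Concretely, I would first fix a $(-B_{G})$-orthonormal basis $\{X_{\al}\}$ of $\fr{m}$ adapted to $\fr{m}=\fr{m}_1\oplus\cdots\oplus\fr{m}_4$, with structure constants $A_{\al\be}^{\gamma}=B_{G}([X_{\al},X_{\be}],X_{\gamma})$, so that by definition $c_{22}^{4}=[224]=\sum (A_{\al\be}^{\gamma})^{2}$, the sum ranging over $X_{\al},X_{\be}\in\fr{m}_2$ and $X_{\gamma}\in\fr{m}_4$. Because $B_{U'}=c\,B_{G}$ on $\fr{u}'$, the rescaled vectors $Y_{\al}=X_{\al}/\sqrt{c}$ (for those $X_{\al}$ lying in $\fr{m}_2\oplus\fr{m}_4=\fr{m}'$) form a $(-B_{U'})$-orthonormal basis of $\fr{m}'$ adapted to $\fr{m}_2\oplus\fr{m}_4$. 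The index ranges for the two triples are therefore identical, since $\fr{m}_2$ and $\fr{m}_4$ are the same subspaces in both settings.

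The computational heart is then a one-line scaling. As the bracket of $\fr{u}'$ restricts that of $\fr{g}$, one has $[Y_{\al},Y_{\be}]=\tfrac{1}{c}[X_{\al},X_{\be}]$, and since $X_{\gamma}\in\fr{m}'$ we get $B_{U'}([X_{\al},X_{\be}],X_{\gamma})=c\,A_{\al\be}^{\gamma}$; hence the structure constants of $U'/K'$ are $(A')_{\al\be}^{\gamma}=B_{U'}([Y_{\al},Y_{\be}],Y_{\gamma})=\tfrac{1}{\sqrt{c}}\,A_{\al\be}^{\gamma}$. Squaring and summing over the common index set yields $(c_{22}^{4})'=\tfrac{1}{c}\,c_{22}^{4}$, which is the asserted identity $c_{22}^{4}=c\cdot (c_{22}^{4})'$. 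It is worth spelling out that the $\fr{m}_4$-component extracted by pairing with $X_{\gamma}$ is insensitive to whether $[X_{\al},X_{\be}]$ is viewed inside $\fr{g}$ or inside $\fr{u}'$: by the bracket relations (\ref{relations}) one has $[\fr{m}_2,\fr{m}_2]\subset\fr{k}\oplus\fr{m}_4$ in $\fr{g}$ and $[\fr{m}_2,\fr{m}_2]\subset\fr{k}'\oplus\fr{m}_4$ in $\fr{u}'$, and in both cases the Killing form annihilates the isotropy part, so the same real number $A_{\al\be}^{\gamma}$ is produced.

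The point requiring care — more a compatibility check than a genuine obstacle — is exactly this last step: one must verify that replacing the ambient group $G$ by $U'$ (equivalently, enlarging or shrinking $\fr{k}$ to $\fr{k}'$) does not change the triple, and that no additional contributions from $\fr{m}_1$ or $\fr{m}_3$ creep in. Both follow from $\fr{m}_2,\fr{m}_4$ being common $(-B_{G})$-orthogonal summands and from $B_{U'}$ being a scalar multiple of $B_{G}$. Once these are secured, the lemma is nothing more than the statement that $[ijk]$ scales like $c^{-1}$ under the homothety $B_{G}\mapsto c\,B_{G}$ of the background bi-invariant metric.
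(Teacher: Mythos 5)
Your proof is correct and follows essentially the same route as the paper: rescale a $(-B_G)$-orthogonal basis of $\fr{m}_2\oplus\fr{m}_4$ by $1/\sqrt{c}$ to obtain a $(-B_{U'})$-orthogonal basis, substitute into the definition of $[224]$, and read off the factor $1/c$. Your extra remark that the Killing form kills the $\fr{k}$- (respectively $\fr{k}'$-) component of $[\fr{m}_2,\fr{m}_2]$, so the same structure constants appear whether the bracket is viewed in $\fr{g}$ or in $\fr{u}'$, is a point the paper leaves implicit and is worth having.
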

\begin{proof}
Let $\{X_{j}^{(k)}\}_{j=1}^{d_{k}}$ be a $B_{G}$-orthogonal basis on $\fr{m}_{k}$,
 where $d_{k}=\dim\fr{m}_{k}$ and $k\in\{1, 2, 3, 4\}$. 
Then the set
$\{Y_{j}^{(k)}=\displaystyle\frac{1}{\sqrt{c}}X_{j}^{(k)}\}_{j=1}^{d_{k}}$ 
is a $B_{U'}$-orthogonal basis 
on $\fr{m}_{k}$ for $k\in\{2, 4\}$.
 Following  the notation of Section 1, we obtain that
\begin{eqnarray*}
(c_{22}^{4})'&=& \sum_{i, j} \big(B_{U'}([Y_{i}^{(2)}, Y_{i}^{(2)}], Y_{j}^{(4)})\big)^{2}
=\sum_{i, j} \big(B_{U'}([\frac{1}{\sqrt{c}}X_{i}^{(2)}, \frac{1}{\sqrt{c}}X_{i}^{(2)}], \frac{1}{\sqrt{c}}X_{j}^{(4)})\big)^{2}\\
&=&\sum_{i, j}\big(\frac{1}{c\sqrt{c}}B_{U'}([X_{i}^{(2)}, X_{i}^{(2)}], X_{j}^{(4)})\big)^{2}
=\sum_{i, j}\big(\frac{1}{\sqrt{c}}B_{G}([X_{i}^{(2)}, X_{i}^{(2)}], X_{j}^{(4)})\big)^{2}\\
&=& \frac{1}{c}\sum_{i, j}\big(B_{G}([X_{i}^{(2)}, X_{i}^{(2)}], X_{j}^{(4)})\big)^{2}=\frac{1}{c}\cdot c_{22}^{4},
\end{eqnarray*}
where $1\leq i\leq d_2$, $1\leq j\leq d_4$.  Hence $c_{22}^{4}= c\cdot(c_{22}^{4})'$.
\end{proof}

From Lemma \ref{Lem1} and     relations (\ref{Kill1}) and (\ref{Kill2}), 
we can find the value $c_{22}^{4}=[224]$ for all flag manifolds of Type I,
so from (\ref{eq1})-(\ref{eq4}), we obtain the following  table:

\smallskip
  \begin{center}
\begin{tabular}{|l|l|l|l|l|}
\hline 
$G$ & $c_{22}^{4}=[224]$ & $c_{11}^{2}=[112]$ & $c_{12}^{3}=[123]$ & $c_{13}^{4}=[134]$\\
\hline 
$F_4$ & $2$ & $2$ & $1$ & $2/3$\\
\hline
$E_7$ & $2$ & $8$ & $4$ & $4/3$\\
\hline 
${E_8}_{(i)}$ & $2$ & $16$ & $8$ & $8/5$\\
\hline
${E_8}_{(ii)}$ & $14/3$ & $14$ & $7$ & $14/5$\\
\hline
\end{tabular}
\end{center}
\smallskip
\begin{center}
{\sc{Table 7.}} \ Values of the unknown triples in Proposition \ref{componentsI}.
\end{center} 
 \medskip
 
 It is now evident that by using Tables 5 and 7,   the components  
 of the Ricci tensor (\ref{compI}) for a flag manifold $M=G/K$ of Type I  
 are completely determined.  Thus by setting $x_1=1$ and  solving the system (\ref{systemI}), we obtain the following:
 \begin{theorem}\label{The5}
(1) \ Let $M=G/K$ be a generalized flag manifold of Type I associated to the exceptional Lie groups
$F_4, E_7$, and ${E_{8}}_{(ii)}$.
Then $M$ admits (up to scale) three $G$-invariant Einstein metrics.  One is 
a K\"ahler-Einstein metric given by $g=(1, 2, 3, 4)$, and the other two are non-K\"ahler 
given approximatelly as  follows:
  \[
  \begin{tabular}{c|c|c}
  $G$    &  $g_1=(x_1, x_2, x_3, x_4)$ & $g_2=(x_1, x_2, x_3, x_4)$\\
  \hline\hline
  $F_4$  &  $(1, \ 1.2761, \ 1.9578,  \ 2.3178)$ & $(1, \ 0.9704, \ 0.2291, \ 1.0097)$ \\
  $E_7$  &  $(1, \ 0.8233, \ 1.2942,  \ 1.3449)$ & $(1, \  0.9912, \ 0.5783, \ 1.1312)$ \\
  ${E_8}_{(ii)}$ &  $(1, \ 0.9133, \ 1.4136, \ 1.5196)$ & $(1, \ 0.9663, \ 0.4898, \ 1.0809)$
  \end{tabular}
  \]
(2) \ If  $M=E_8/SO(10)\times SU(3)\times U(1)$, i.e. the flag manifold of Type  I correpsonding to  ${E_8}_{(i)}$,
 then $M$ admits (up to scale) five $E_8$-invariant Einstein metrics.  
 One is a K\"ahler-Einstein metric given by $g=(1, 2, 3, 4)$, and the other four are non-K\"ahler  given approximatelly
as follows:
\[
\begin{tabular}{cc}
 $g_1=(1, \ 0.6496, \ 1.1094, \ 1.0610)$, & $g_3=(1, \ 1.0970, \ 0.7703, \ 1.2969)$, \\
 $g_2=(1, \ 1.1560, \ 1.0178, \ 0.2146)$, & $g_4=(1, \ 0.7633, \ 1.0090, \ 0.1910)$. 
  \end{tabular}
\]
\end{theorem}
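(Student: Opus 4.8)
The plan is to convert the Einstein condition into an explicit algebraic system in the scale-normalized metric parameters and then determine the exact number of its positive real solutions. Since invariant Einstein metrics are counted up to homothety, I would first set $x_1 = 1$ and substitute the module dimensions $d_1, d_2, d_3, d_4$ from Table 5 together with the structure constants $c_{11}^{2}, c_{12}^{3}, c_{13}^{4}, c_{22}^{4}$ from Table 7 into the Ricci components (\ref{compI}). For each of the four groups $F_4, E_7, {E_8}_{(i)}, {E_8}_{(ii)}$ this produces $r_1, r_2, r_3, r_4$ as concrete rational functions of $(x_2, x_3, x_4)$, so that the Einstein equation reduces to the system (\ref{systemI}), i.e. $r_1 - r_2 = r_2 - r_3 = r_3 - r_4 = 0$.

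Next I would clear denominators: each difference $r_i - r_j$ is a rational function whose numerator is a polynomial over the common denominator $x_2^2 x_3 x_4$, so multiplying through turns the system into three polynomial equations $P_1 = P_2 = P_3 = 0$ in $x_2, x_3, x_4$. By Theorem \ref{The3} the K\"ahler-Einstein metric $(x_2, x_3, x_4) = (2, 3, 4)$ is already a solution, so it will split off as a known component of the solution set. The core step is then to solve the system by elimination, computing a Gr\"obner basis (or iterated resultants) to eliminate $x_3$ and $x_4$ and arrive at a single univariate polynomial $p(x_2) = 0$ whose positive real roots, together with the $x_3, x_4$ recovered by back-substitution, exhaust all $G$-invariant Einstein metrics. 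Dividing out the K\"ahler factor attached to $x_2 = 2$ leaves a reduced polynomial whose positive real roots correspond precisely to the non-K\"ahler metrics.

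The decisive point will be the rigorous root count on this reduced polynomial: one must show that it has exactly two positive real roots for $F_4, E_7, {E_8}_{(ii)}$ and exactly four for ${E_8}_{(i)}$, which then yield the approximate numerical solutions listed. I expect this to be the main obstacle, since the tabulated values only pin the solutions down approximately, whereas establishing the precise count demands an exact argument, for instance via Descartes' rule of signs or a Sturm sequence applied to the reduced polynomial, supplemented by checking that each admissible root produces genuinely positive $x_3, x_4$. Finally, because the count is claimed only up to scale, I would verify that the resulting solutions are pairwise non-homothetic, so that they represent distinct $G$-invariant Einstein metrics.
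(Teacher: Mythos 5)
Your proposal follows essentially the same route as the paper: the paper likewise normalizes $x_1=1$, feeds the dimensions of Table 5 and the structure constants of Table 7 into the Ricci components (\ref{compI}), and obtains Theorem \ref{The5} by solving the resulting polynomial system (\ref{systemI}). In fact the paper stops at presenting the approximate numerical solutions, so the rigorous root count that you correctly flag as the decisive step (elimination to a univariate polynomial plus a Descartes/Sturm argument and a positivity check) is precisely what the printed argument leaves implicit.
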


\markboth{Andreas Arvanitoyeorgos and Ioannis Chrysikos}{Invariant Einstein metrics  on  flag manifolds with four isotropy summands}
\section{Invariant Einstein metrics on generalized flag manifolds of type II}
\markboth{Andreas Arvanitoyeorgos and Ioannis Chrysikos}{Invariant Einstein metrics  on  flag manifolds with four isotropy summands}

\subsection{Calculation of the Ricci tensor}
In this section we will investigate $G$-invariant  Einstein metrics on flag manifolds $M=G/K$ of Type II.
We will apply Proposition \ref{Ricc} to  compute the Ricci tensor  for a   $G$-invariant Riemannian metric on $M$ detrermined by (\ref{metrI}).
 \begin{prop}\label{componentsII}
 Let $M=G/K$ be a generalized flag manifold of Type IIa. Then
the components $r_{i}$ of the Ricci tensor   associated to the   metric 
$\left\langle \ , \ \right\rangle$  given in (\ref{metrI}), are the following:
\begin{equation}\label{compII}
  \ \  \left.
\begin{tabular}{l}
$r_1=\displaystyle\frac{1}{2x_1}+  \frac{c_{12}^3}{2d_1}\Big( \frac{x_1}{x_2x_3}- \frac{x_2}{x_1x_3}- \frac{x_3}{x_1x_2}\Big)$ \\
$r_2=\displaystyle\frac{1}{2x_2}+  \frac{c_{12}^3}{2d_2}\Big( \frac{x_2}{x_1x_3}- \frac{x_1}{x_2x_3}- \frac{x_3}{x_1x_2}\Big)+  \frac{c_{23}^4}{2d_2}\Big( \frac{x_2}{x_3x_4}- \frac{x_4}{x_2x_3}- \frac{x_3}{x_2x_4}\Big)$ \\
$r_3=\displaystyle\frac{1}{2x_3}+  \frac{c_{12}^3}{2d_3}\Big( \frac{x_3}{x_1x_2}- \frac{x_2}{x_1x_3}- \frac{x_1}{x_2x_3}\Big)+  \frac{c_{23}^4}{2d_3}\Big( \frac{x_3}{x_2x_4}- \frac{x_4}{x_2x_3}- \frac{x_2}{x_3x_4}\Big)$ \\
$r_4=\displaystyle\frac{1}{2x_4}+  \frac{c_{23}^4}{2d_4}\Big( \frac{x_4}{x_2x_3}- \frac{x_3}{x_2x_4}- \frac{x_2}{x_3x_4}\Big),$ \\
\end{tabular}\right\} 
\end{equation}
where $c_{12}^3=[123]$ and $c_{23}^4=[234]$. 

If $M=G/K$ is of Type IIb, then
the components $r_{i}$ of the Ricci tensor $\Ric$ associated to the  
$\left\langle \ , \ \right\rangle$ given in (\ref{metrI}), are the following:
\begin{equation}\label{compIII}
 \ \ \left.
\begin{tabular}{l}
$r_1=\displaystyle\frac{1}{2x_1}+  \frac{c_{12}^3}{2d_1}\Big( \frac{x_1}{x_2x_3}- \frac{x_2}{x_1x_3}- \frac{x_3}{x_1x_2}\Big)+  \frac{c_{13}^4}{2d_1}\Big( \frac{x_1}{x_3x_4}- \frac{x_4}{x_1x_3}- \frac{x_3}{x_1x_4}\Big)$ \\
$r_2=\displaystyle\frac{1}{2x_2}+  \frac{c_{12}^3}{2d_2}\Big( \frac{x_2}{x_1x_3}- \frac{x_1}{x_2x_3}- \frac{x_3}{x_1x_2}\Big)$ \\
$r_3=\displaystyle\frac{1}{2x_3}+  \frac{c_{12}^3}{2d_3}\Big( \frac{x_3}{x_1x_2}- \frac{x_2}{x_1x_3}- \frac{x_1}{x_2x_3}\Big)+  \frac{c_{13}^4}{2d_3}\Big( \frac{x_3}{x_1x_4}- \frac{x_4}{x_1x_3}- \frac{x_1}{x_3x_4}\Big)$ \\
$r_4=\displaystyle\frac{1}{2x_4}+  \frac{c_{13}^4}{2d_4}\Big( \frac{x_4}{x_1x_3}- \frac{x_3}{x_1x_4}- \frac{x_1}{x_3x_4}\Big),$ \\
\end{tabular}\right\}
\end{equation}
where $c_{12}^3=[123]$ and $c_{13}^4=[134]$.
\end{prop}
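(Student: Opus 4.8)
The plan is to follow verbatim the strategy used for Proposition \ref{componentsI}: first pin down which structure constants $[ijk]$ can be non-zero, and then read off (\ref{compII}) and (\ref{compIII}) as a direct specialization of the general Ricci formula (\ref{ricc}) of Proposition \ref{Ricc}, using in addition that $B(\fr{m}_i,\fr{m}_j)=B(\fr{m}_i,\fr{k})=0$. The entire content is thus the determination of the admissible triples, and this is governed purely by the additive arithmetic of the $\fr{t}$-roots. Concretely, Proposition \ref{isotropy} together with $[\fr{g}^{\mathbb{C}}_\al,\fr{g}^{\mathbb{C}}_\be]\subseteq\fr{g}^{\mathbb{C}}_{\al+\be}$ gives $[\fr{m}_\xi,\fr{m}_\eta]\subseteq\fr{m}_{\xi+\eta}$, with the convention that $\fr{m}_\zeta=0$ when $\zeta\notin R_{\fr{t}}$ and $\fr{m}_0\subseteq\fr{k}^{\mathbb{C}}$. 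Since each real summand is $\fr{m}_i=(\fr{m}_{\xi_i}\oplus\fr{m}_{-\xi_i})^\tau$, a triple $[ijk]$ can be non-zero only when $\pm\xi_i\pm\xi_j\pm\xi_k=0$ for some choice of signs.

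For Type IIa I would adopt the labeling from Proposition \ref{classification}, namely $\xi_1=\overline{\al}_1$, $\xi_2=\overline{\al}_2$, $\xi_3=\overline{\al}_1+\overline{\al}_2$, $\xi_4=\overline{\al}_1+2\overline{\al}_2$, where $\Hgt(\al_1)=1$ and $\Hgt(\al_2)=2$. Running through all signed sums, the only vanishing relations are $\xi_1+\xi_2=\xi_3$ and $\xi_2+\xi_3=\xi_4$, so that $[\fr{m}_1,\fr{m}_2]\subseteq\fr{m}_3$ and $[\fr{m}_2,\fr{m}_3]\subseteq\fr{m}_1\oplus\fr{m}_4$; every other combination (such as $\xi_1+\xi_3$, $\xi_1+\xi_4$, or a diagonal $2\xi_i$) falls outside $R_{\fr{t}}$. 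Hence $c_{12}^3=[123]$ and $c_{23}^4=[234]$ are the only non-zero triples, and substituting them into (\ref{ricc}) yields (\ref{compII}).

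For Type IIb the corresponding labeling is $\xi_1=\overline{\al}_p$, $\xi_2=\overline{\al}_\ell$, $\xi_3=\overline{\al}_p+\overline{\al}_\ell$, $\xi_4=2\overline{\al}_p+\overline{\al}_\ell$, the height-$2$ root $\al_p$ being the one that may occur with coefficient $2$. Now the surviving relations are $\xi_1+\xi_2=\xi_3$ and $\xi_1+\xi_3=\xi_4$, whence $[\fr{m}_1,\fr{m}_2]\subseteq\fr{m}_3$ and $[\fr{m}_1,\fr{m}_3]\subseteq\fr{m}_2\oplus\fr{m}_4$, so that $c_{12}^3=[123]$ and $c_{13}^4=[134]$ are the only non-zero triples; the same specialization of (\ref{ricc}) then gives (\ref{compIII}).

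The argument is essentially bookkeeping, and the one place demanding care — precisely where Types IIa and IIb diverge from Type I — is the fate of the potential fifth $\fr{t}$-root $2\overline{\al}_j$ attached to the height-$2$ simple root $\al_j$. In Type I the relations $2\xi_1=\xi_2$ and $2\xi_2=\xi_4$ produced the extra diagonal triples $[112]$ and $[224]$; here, by contrast, $2\overline{\al}_2$ (Type IIa), resp. $2\overline{\al}_p$ (Type IIb), is not a $\fr{t}$-root, so no diagonal triple $[iij]$ survives (and a height-$1$ root cannot be doubled at all). I would confirm this last point directly from the explicit lists of positive $\fr{t}$-roots compiled in the proof of Proposition \ref{classification}, where exactly four positive $\fr{t}$-roots occur in each case.
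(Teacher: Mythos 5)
Your proposal is correct and follows essentially the same route as the paper: the paper likewise reduces everything to determining the bracket inclusions $[\fr{m}_i,\fr{m}_j]$ (computed from the root vectors via (\ref{str}), which is exactly your $\fr{t}$-root addition $[\fr{m}_\xi,\fr{m}_\eta]\subseteq\fr{m}_{\xi+\eta}$ in disguise), concludes that only $[123],[234]$ (Type IIa) resp. $[123],[134]$ (Type IIb) survive, and then substitutes into (\ref{ricc}). Your signed-sum bookkeeping and the resulting inclusion tables agree with those in the paper's proof.
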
    
   
\begin{proof}
In order to apply Proposition \ref{Ricc} we need to  check which triples 
$[ijk]$ do not vanish.  It is sufficient to compute the brackets
 $[\fr{m}_{i}, \fr{m}_{j}]$ between the real irreducible submodules $\fr{m}_{i}$ of $\fr{m}$.
According to (\ref{bas}), each  real submodule $\fr{m}_{i}$ associated to the positive $\fr{t}$-root $\xi_{i}$ 
 can be expressed in  terms of  root vectors $E_{\pm\al}$  $(\al\in R_M^+)$, such that $\kappa(\al)=\xi_{i}$.  
So from (\ref{str}) we can compute the  brackets $[\fr{m}_{i}, \fr{m}_{j}]$, 
for suitable root vectors $E_{\al}$.  

Let $M=G/K$ be a flag manifold  of Type IIa. Then 
we obtain that $[\fr{m}_i, \fr{m}_i]\subset\fr{k}$ for  $1\leq i\leq 4$, and
\[
\begin{tabular}{lll}
$[\fr{m}_1, \fr{m}_2]\subset\fr{m}_3$, &   
$[\fr{m}_1, \fr{m}_3]\subset\fr{m}_2,$ & 
$[\fr{m}_1, \fr{m}_4]\subset\fr{k},$ \\
$[\fr{m}_2, \fr{m}_3]\subset\fr{m}_1\oplus\fr{m}_4,$ & 
$[\fr{m}_2, \fr{m}_4]\subset\fr{m}_3,$ & 
$[\fr{m}_3, \fr{m}_4]\subset\fr{m}_2.$
\end{tabular}
\]
Using the  
definition of the triples $[ijk]$ and the fact that 
$B(\fr{m}_i, \fr{m}_j)=B(\fr{m}_i, \fr{k})=0$, 
it follows that the only non-zero $[ijk]$'s  are $[123]$, $[234]$, and their symmetries.
A straightforward application of   relation (\ref{ricc}) gives  now $(\ref{compII})$.

Let now $M=G/K$ be a flag manifold of Type IIb.  
Then we obtain the relations $[\fr{m}_i, \fr{m}_i]\subset\fr{k}$, and
 \[
\begin{tabular}{lll}
$[\fr{m}_1, \fr{m}_2]\subset\fr{m}_3$, &   
$[\fr{m}_1, \fr{m}_3]\subset\fr{m}_2\oplus\fr{m}_4,$ & 
$[\fr{m}_1, \fr{m}_4]\subset\fr{m}_3,$ \\
$[\fr{m}_2, \fr{m}_3]\subset\fr{m}_1,$ & 
$[\fr{m}_2, \fr{m}_4]\subset\fr{k},$ & 
$[\fr{m}_3, \fr{m}_4]\subset\fr{m}_1.$
\end{tabular}
\]
We can easily conclude that the only non-zero $[ijk]$'s are $[123]$, $[134],$  
and their symmetries, 
thus by applying (\ref{ricc})  we obtain   $(\ref{compIII})$.
\end{proof}

The next table gives the dimensions  $d_{i}=\dim\fr{m}_{i}$ 
of the irreducible submodules $\fr{m}_i$.
 
\medskip
 \begin{center}
\begin{tabular}{|l|c|c|c|c|}
\hline 
$G$ & $d_1$ & $d_2$ & $d_3$ & $d_4$\\
\hline 
$SO(2\ell+1)$ & $2$ & $2(2\ell-3)$ & $2(2\ell-3)$ & $2$\\
\hline
$Sp(\ell)$ & $2p(\ell-p)$ & $(\ell-p)(\ell-p+1)$ & $2p(\ell-p)$ & $p(p+1)$\\
\hline 
$SO(2\ell)_{(i)}$ & $2$ & $4(\ell-2)$ & $4(\ell-2)$ & $2$\\
\hline
$SO(2\ell)_{(ii)}$ & $2p(\ell-p)$ & $(\ell-p)(\ell-p-1)$ & $2p(\ell-p)$ & $p(p-1)$\\
\hline
$E_6$ & $2$ & $20$ & $20$ & $10$\\
\hline
$E_7$ & $2$ & $32$ & $32$ & $20$\\
\hline
\end{tabular}
\end{center}

\smallskip
\begin{center}
{\sc{Table 8.}} \ The  dimensions of the isotropy submodules for $M=G/K$ of Type II. 
\end{center}
\medskip

 By taking into account the explicit form of the K\"ahler-Einstein metrics in  Theorem \ref{The4},  
 and substituting these in (\ref{compII}) and (\ref{compIII}), 
 we can find the   values of the unknown triples $[ijk]$ in Proposition \ref{componentsII}.
Note that since we are looking for only two unknowns, 
these triples will be the solutions 
of any of the systems  
\[
\left\{ 
\begin{tabular}{l}
 $r_1=r_2$ \\
 $r_2=r_3$ 
 \end{tabular}\right\},
 \qquad 
 \left\{ 
\begin{tabular}{l}
 $r_2=r_3$ \\
 $r_3=r_4$ 
 \end{tabular}\right\},
 \qquad 
 \left\{ 
\begin{tabular}{l}
 $r_3=r_4$ \\
 $r_4=r_1$ 
 \end{tabular}\right\}.
 \qquad 
 \]
 Therefore, we obtain the following:
\begin{lemma}\label{Lem2}
(1) \ For a flag manifold $M=G/K$ of Type IIa, the non-zero numbers 
$c_{12}^{3}=[123]$ and $c_{23}^{4}=[234]$ are given as follows:
\[
 \begin{tabular}{|c|c|c|} 
 \hline
 $G$ & $c_{12}^{3}=[123]$ & $c_{23}^{4}=[234]$ \\
 \hline 
 $  SO(2\ell+1) $ & $ \displaystyle \frac{2\ell-3}{2\ell-1} $ & $\displaystyle\frac{2\ell-3}{2\ell-1}$ \\ 
 \hline
$  SO(2\ell)_{(i)}$  & $ \displaystyle\frac{\ell-2}{\ell-1}  $ & $\displaystyle \frac{\ell-2}{\ell-1}$ \\
\hline
$ E_6  $  &  $ 5/6 $ & $  5/2 $  \\ 
\hline
 $ E_7  $ & $ 8/9 $ & $ 40/9 $ \\ 
 \hline
 \end{tabular}
 \]
(2) \  For a flag manifold $M=G/K$ of Type IIb, the non-zero numbers 
$c_{12}^{3}=[123]$ and $c_{13}^{4}=[134]$ are given as follows:
\[
 \begin{tabular}{|c|c|c|} 
 \hline
 $G$ & $c_{12}^{3}=[123]$ & $c_{13}^{4}=[134]$ \\
 \hline
    $ Sp(\ell) $ & $ \displaystyle\frac{p(\ell-p)(\ell-p+1)}{2(\ell+1)} $ &  $ \displaystyle\frac{p(p+1)(\ell-p)}{2(\ell+1)}$  \\
    \hline
 $ SO(2\ell)_{(ii)}$ & $ \displaystyle\frac{p(\ell-p)(\ell-p-1)}{2(\ell-1)} $ & $ \displaystyle\frac{p(p-1)(\ell-p)}{2(\ell-1)}$\\
 \hline
 \end{tabular}
 \]
\end{lemma}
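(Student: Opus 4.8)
The plan is to exploit the fact, established in Theorem \ref{The4}, that for each space in the list the displayed metric $g=(x_1,x_2,x_3,x_4)$ is K\"ahler-Einstein, hence in particular Einstein. Consequently its Ricci components satisfy $r_1=r_2=r_3=r_4$. I would substitute the explicit values of $x_1,\dots,x_4$ from Theorem \ref{The4}, together with the dimensions $d_i$ from Table 8, into the Ricci formulas (\ref{compII}) for Type IIa (respectively (\ref{compIII}) for Type IIb). In each of these formulas the two unknown triples occur \emph{linearly}, so the Einstein conditions collapse into a pair of linear equations in $[123]$ and $[234]$ (respectively $[123]$ and $[134]$), which I would then solve. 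Note that the triples $[ijk]$ are fixed structure constants, independent of the metric, and that the equations $r_i=r_j$ are invariant under an overall rescaling of the $x_i$; hence the fact that the K\"ahler-Einstein metrics of Theorem \ref{The4} are only determined up to scale causes no ambiguity.

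More concretely, for Type IIa write $A=[123]$ and $B=[234]$. By (\ref{compII}) each $r_i$ is an affine function of $(A,B)$ whose coefficients are explicit rational functions of the $x_j$ and $d_j$; observe that $r_1$ involves only $A$ while $r_4$ involves only $B$, and $r_2,r_3$ involve both. Choosing any two of the three independent differences among $r_1=r_2=r_3=r_4$ --- for instance $r_1-r_2=0$ together with $r_3-r_4=0$ --- yields a $2\times 2$ linear system, which I would invert for $(A,B)$. Because the underlying metric is genuinely Einstein, all three differences vanish at once, so the choice of which two equations to use is immaterial and the solution is automatically consistent; this is precisely the remark preceding the Lemma. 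The identical scheme applies to Type IIb with $A=[123]$, $B=[134]$ (here $r_2$ involves only $A$ and $r_4$ only $B$), using the bracket relations $[\fr{m}_1,\fr{m}_3]\subset\fr{m}_2\oplus\fr{m}_4$ recorded in the proof of Proposition \ref{componentsII}.

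The remaining work is then the case-by-case algebra. For the exceptional groups $E_6,E_7$ the inputs are concrete numbers (e.g.\ $g=(1,4,5,9)$ and $(d_1,\dots,d_4)=(2,20,20,10)$ for $E_6$), so each case reduces to solving a numerical $2\times 2$ system, producing values such as $[123]=5/6$, $[234]=5/2$. For the classical families $SO(2\ell+1)$, $SO(2\ell)_{(i)}$, $Sp(\ell)$ and $SO(2\ell)_{(ii)}$ the metric entries and the dimensions depend on $\ell$ (and on $p$), so the same elimination must be carried out symbolically and the resulting rational expressions simplified in $\ell$ and $p$. I expect the main obstacle to be exactly this symbolic bookkeeping: organizing the many cross-terms of the form $x_i/(x_jx_k)$, and verifying that the parameter-dependent simplifications collapse to the clean closed forms stated in the Lemma, such as $\frac{p(\ell-p)(\ell-p+1)}{2(\ell+1)}$ and $\frac{p(p+1)(\ell-p)}{2(\ell+1)}$. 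No new conceptual ingredient beyond Theorem \ref{The4}, Proposition \ref{componentsII}, and the dimension count of Table 8 is required.
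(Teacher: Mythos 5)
Your proposal is correct and coincides with the paper's own argument: the authors likewise substitute the K\"ahler-Einstein metrics of Theorem \ref{The4} and the dimensions of Table 8 into the Ricci formulas (\ref{compII}) and (\ref{compIII}), and solve the resulting (linear, overdetermined but consistent) system of Einstein conditions for the two unknown triples. Your observations about linearity, scale invariance of $r_i=r_j$, and the immateriality of which pair of equations is chosen match the remark the paper makes immediately before the Lemma.
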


\subsection{Solutions of the Einstein equation}
 An invariant  metric $g=(x_1, x_2, x_3, x_4)$ on  a generalized 
 flag manifold $M=G/K$    
 of Type II  is Einstein if and only if, it is a solution of system (\ref{systemI})
    where in this case the components $r_i$ $(i=1, \ldots, 4)$ are determined by Proposition \ref{componentsII}.

 $\bullet$ {\bf Type IIa.}
 Let $M=G/K$  be a generalized flag manifold of Type IIa.
 System (\ref{systemI}) reduces  to the following system of non linear 
 polynomial equations:
 \begin{equation}\label{systemIIa}
 \left.
 \begin{tabular}{r}
  $ d_1d_2x_3x_4(x_2-x_1)+c_{12}^3x_1^2x_4(d_1+d_2)-c_{12}^3x_2^2x_4(d_1+d_2)-c_{12}^3x_2^2x_4(d_2-d_1)$\\
  $ -c_{23}^4d_1x_1(x_2^2-x_3^2-x_4^2)=0$\\
  $d_2d_3x_1x_4(x_3-x_2)+(x_2^2-x_3^2)(d_2+d_3)(c_{12}^3x_4+c_{23}^4x_1)$\\
  $ -x_1x_4(d_3-d_2)(c_{12}^3x_1+c_{23}^4x_4)=0$\\
  $d_3d_4x_1x_2(x_4-x_3)+c_{12}^3d_4x_4(x_3^2-x_2^2-x_1^2)+c_{23}^4x_1(d_3+d_4)(x_3^2-x_4^2)$\\
  $+c_{23}^4x_2^2x_1(d_3-d_4)=0.$ 
  \end{tabular}\right\}
  \end{equation}   
Because of  Table 8 and Lemma \ref{Lem2}, the coefficients of system (\ref{systemIIa})
	are  completely determined.  

According to \cite[p.~1053]{Gr}  the number of   invariant {\it complex} 
Einstein metrics
on the  flag manifolds of Type 
IIa corresponding to the exceptional Lie groups $E_6$ and $E_7$, 
is twelve, i.e. $\mathcal{E}(M)=12$.  
By setting $x_1=1$ in   system (\ref{systemIIa}) we can get approximate values for all 
these complex solutions.
We can sharpen this result and  prove that these 
exceptional flags admit  eight {\it real} invariant Einstein metrics, explicity 
given in the following theorem. 

\begin{theorem}\label{The6}
  Let $M=G/K$ be a generalized flag manifold of Type IIa, associated 
  to the exceptional Lie groups
$E_6$ and $E_7$.
Then $M$ admits  (up to scale),  precisely eight invariant Einstein metrics.   
These metrics are approximately  given as  follows:
 \begin{itemize}
 \item  $E_6$  
  \begin{tabular}{ll}
   $(a) \ (1,  0.568845,  0.568845,  0.452648),$ & $(b) \ (1, 3.81171,  3.81171,  7.45484),$ \\
   $(c) \ (1, 4.93397, 4.93397,  3.34633)$, & $(d) \ (1, 0.685474,  0.685474,  1.19063),$ \\
   $(e) \ (1,  0.636364,  0.363636,  0.272727),$ & $(f) \ (1, 0.363636, 0.636364,  0.272727),$ \\
   $(g) \ (1, \ 4, \ 5,  \ 9),$ & $(h) \ (1, \ 5, \ 4, \ 9).$
   \end{tabular} \\\\
  \item  $ E_7$  
  \begin{tabular}{ll}
   $(a) \ (1,  7.46064,  7.46064,  5.7877),$ & $(b) \ (1, 5.79359,  5.79359,  11.4613),$ \\
   $(c) \ (1, 0.704472,  0.704472,  1.27517)$, & $(d) \ (1, 0.579765,  0.579765,   0.505408),$ \\
   $(e) \ (1,  0.352941,  0.647059,  0.294118),$ & $(f) \ (1, 0.647059, 0.352941,  0.294118),$ \\
   $(g) \ (1, \ 6, \ 7,  \ 13),$ & $(h) \ (1, \ 7, \ 6, \ 13).$
   \end{tabular}
\end{itemize}
In both cases, the metrics (e), (f), (g) and (h) are K\"ahler-Einstein metrics.
\end{theorem}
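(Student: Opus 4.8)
The plan is to reduce the Einstein system (\ref{systemIIa}) by means of a symmetry coming from the coincidence of two isotropy dimensions, and then to count positive real solutions. First I would substitute the dimensions of Table 8 together with the structure constants $c_{12}^3=[123]$ and $c_{23}^4=[234]$ from Lemma \ref{Lem2} into the Ricci components (\ref{compII}), and normalize by setting $x_1=1$. The decisive observation is that for both $E_6$ and $E_7$ one has $d_2=d_3$. Hence the involution $\sigma:(x_1,x_2,x_3,x_4)\mapsto(x_1,x_3,x_2,x_4)$ interchanges $r_2$ with $r_3$ and fixes $r_1,r_4$, so it carries Einstein metrics to Einstein metrics. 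The solution set is therefore $\sigma$-invariant, which already accounts for the pairing of $(e)$ with $(f)$ and of $(g)$ with $(h)$, and it suffices to describe solutions up to $\sigma$.

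Second, I would compute $r_2-r_3$ explicitly. Using $d_2=d_3$ the difference factors as
\[
r_2-r_3=-\frac{x_2-x_3}{x_2x_3}\left(\frac{1}{2}-\frac{c_{12}^3(x_2+x_3)}{d_2x_1}-\frac{c_{23}^4(x_2+x_3)}{d_2x_4}\right),
\]
which cleanly separates the analysis into two cases. In Case A, $x_2=x_3$, the relation $r_2=r_3$ holds identically, so the Einstein condition reduces to the two equations $r_1=r_2$ and $r_2=r_4$ in the two unknowns $t:=x_2=x_3$ and $x_4$. Eliminating one variable by a resultant yields a single-variable polynomial whose positive roots I would count; for each of $E_6$ and $E_7$ this gives exactly the four metrics $(a)$--$(d)$, all with $x_2=x_3$ and hence non-K\"ahler.

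In Case B, $x_2\neq x_3$, the remaining bracket must vanish, giving the linear constraint $\tfrac{1}{2}=\tfrac{x_2+x_3}{d_2}\bigl(\tfrac{c_{12}^3}{x_1}+\tfrac{c_{23}^4}{x_4}\bigr)$. Solving this for $x_4$ and feeding it back into $r_1=r_2$ and $r_3=r_4$ collapses the system to low degree, and solving it produces, up to $\sigma$, precisely the two orbits $\{(e),(f)\}$ and $\{(g),(h)\}$. These four are then identified with the K\"ahler--Einstein metrics: $(g)=(1,4,5,9)$ (resp. $(1,6,7,13)$) is the canonical metric of Theorem \ref{The4}, $(h)=\sigma(g)$, and the construction of Section 3 applied to the remaining $\fr{t}$-chambers of Theorem \ref{The2} yields the complex structures whose Koszul metrics are $(e)$ and $(f)$. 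Since K\"ahler metrics are automatically Einstein, all four solve the system.

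The main obstacle will be the rigorous count of real roots. The elimination in each case produces a univariate polynomial of moderate degree, and one must show it has exactly the asserted number of positive real roots --- four in Case A and, up to $\sigma$, two orbits in Case B --- with no spurious or complex-only roots counted as real. Here the comparison with Graev's result $\mathcal{E}(M)=12$ for the complex Einstein metrics (\cite[p.~1053]{Gr}) is decisive: having exhibited eight real solutions, one concludes that the four remaining complex critical points form genuinely non-real conjugate pairs, so that no real metric is missed. The final bookkeeping --- verifying positivity of the $x_i$, confirming that the approximate values $(a)$--$(h)$ satisfy (\ref{systemIIa}), and checking that the eight metrics are mutually distinct --- completes the argument.
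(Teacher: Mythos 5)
Your strategy is sound and, where it can be checked against the paper, consistent with it; but it is genuinely more structured than what the paper actually does for Theorem \ref{The6}. The paper offers essentially no written proof here: it substitutes the data of Table 8 and Lemma \ref{Lem2} into system (\ref{systemIIa}), sets $x_1=1$, invokes Graev's count $\mathcal{E}(M)=12$ of \emph{complex} solutions, solves the normalized system outright (numerically), and reports that eight of the twelve solutions are real and positive, four of them matching the K\"ahler--Einstein metrics of Section 3. Your refinement --- observing that $d_2=d_3$ for both $E_6$ and $E_7$, so the involution $\sigma:(x_2,x_3)\mapsto(x_3,x_2)$ preserves the system, and that $r_2-r_3$ factors as $-\frac{x_2-x_3}{x_2x_3}\bigl(\frac{1}{2}-\frac{c_{12}^3(x_2+x_3)}{d_2x_1}-\frac{c_{23}^4(x_2+x_3)}{d_2x_4}\bigr)$ --- is correct (I verified the factorization from (\ref{compII})), and it splits the problem into two exhaustive, low-degree cases exactly matching the observed solution list: the four non-K\"ahler metrics $(a)$--$(d)$ all have $x_2=x_3$, while the four K\"ahler ones $(e)$--$(h)$ fall into the two $\sigma$-orbits of Case B. This is precisely the style of argument the paper \emph{does} write out for the classical spaces in Theorems \ref{The7}--\ref{The9} (case splits on equalities among the $x_i$, with Graev's count closing the argument), so your proof is a natural completion of what the authors left implicit for the exceptional cases. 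What your route buys is a reduction to univariate polynomials whose positive roots can be counted rigorously; what the paper's route buys is brevity.

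One logical point to tighten: as written, "having exhibited eight real solutions, one concludes that the four remaining complex critical points form genuinely non-real conjugate pairs" is not a valid inference by itself --- knowing the total is twelve and finding eight real ones does not force the other four to be non-real unless you have actually located all twelve or completed an exhaustive real-root count. In your setup this is harmless, because Cases A and B together exhaust all solutions of (\ref{systemIIa}) and each case reduces to an explicit univariate polynomial whose number of positive real roots can be certified (Sturm sequences or explicit factorization); Graev's $\mathcal{E}(M)=12$ then serves as a consistency check rather than the decisive step. You should also verify, not merely assert, that Case B contains no non-K\"ahler real solutions --- i.e.\ that its four positive solutions coincide with the Koszul metrics produced by the four $\fr{t}$-chambers of Theorem \ref{The2} via Theorem \ref{The4} and its analogues --- since a priori the linear constraint of Case B could admit additional real solutions with $x_2\neq x_3$.
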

 
We will now discuss the flag manifolds $M=G/K$   of Type IIa corresponding to  
the classical Lie groups.

Set $G=SO(2\ell+1)$, that is $M=SO(2\ell+1)/U(1)\times U(1)\times SO(2\ell-3)$.   
 By use of Table 8 and Lemma \ref{Lem2}  system (\ref{systemIIa}) 
 reduces   to 
 \begin{equation}\label{syst1}
 \left.
 \begin{tabular}{r}
$2(2\ell-1)(x_2-x_1)+2(\ell-1)x_4(x_1^2-x_2^2)-2(\ell-2)x_4x_3^2 -x_1(x_2^2-x_3^2-x_4^2)=0$\\
$(2\ell-1)x_1x_4(x_3-x_2)+(x_1+x_4)(x_2^2-x_3^2)=0$\\
$2(2\ell-1)x_1x_2(x_4-x_3)+x_4(x_3^2-x_1^2-x_2^2) +2(\ell-1)x_1(x_3^2-x_4^2)+2(\ell-2)x_1x_2^2=0.$
\end{tabular}\right\} 
  \end{equation} 
According to  \cite[p.~1053]{Gr}, $M$ admits 
ten {\it complex} Einstein metrics, i.e. $\mathcal{E}(M)=10$.

\begin{theorem}\label{The7}
The space $M=SO(2\ell+1)/U(1)\times U(1)\times SO(2\ell-3)$ for $\ell\geq 3$  
admits (up to a scale) precisely eight $SO(2\ell+1)$-invariant 
   Einstein metrics. 
Four of them are K\"ahler, given by 
\[ 
 \begin{tabular}{ll}
   $(a) \ (1,   \ \ell-3/2,  \ \ell-1/2,  \ 2(\ell-1)),$ & $(b) \ (1, \ \ell-1/2, \  \ell-3/2, \ 2(\ell-1)),$ \\
   $(c) \ ( 2(\ell-1),  \ \ell-1/2, \ \ell-3/2, \ 1)$, & $(d) \ ( 2(\ell-1), \ \ell-3/2,  \ \ell-1/2, \ 1),$ 
   \end{tabular}
   \]
and the other four are non-K\"ahler. Two of them are 
given explicity  as follows:
\begin{equation}\label{sol1}
   x_1=x_4=1, \quad   x_2=x_3= \displaystyle\frac{2\ell-1\pm\sqrt{4\ell^2- 12\ell + 5 }}{4}.
 \end{equation}
 \end{theorem}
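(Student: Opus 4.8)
The plan is to exploit the symmetry of the reduced Einstein system together with the explicitly known K\"ahler solutions, using the former to generate solutions and to organize the final count. Set $c=c_{12}^3=c_{23}^4=\frac{2\ell-3}{2\ell-1}$ and $D=d_2=d_3=2(2\ell-3)$, and recall $d_1=d_4=2$ (Table 8, Lemma \ref{Lem2}); a useful identity is $c/D=\frac{1}{2(2\ell-1)}$. Because $d_1=d_4$, $d_2=d_3$ and $c_{12}^3=c_{23}^4$, the Ricci components (\ref{compII}) are invariant under the two substitutions $\sigma:(x_1,x_2,x_3,x_4)\mapsto(x_4,x_3,x_2,x_1)$ and $\tau:(x_1,x_2,x_3,x_4)\mapsto(x_1,x_3,x_2,x_4)$; concretely $r_1\circ\sigma=r_4,\ r_2\circ\sigma=r_3$ and $r_1\circ\tau=r_1,\ r_2\circ\tau=r_3$. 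These generate a group $\Gamma\cong(\mathbb{Z}_2)^2$ acting on the set of Einstein metrics up to scale.

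First I would record the four K\"ahler metrics. By Theorem \ref{The2} the space admits four invariant complex structures, hence by the method of Theorem \ref{The4} four K\"ahler--Einstein metrics, which are automatically solutions of the Einstein system. The natural one is $(a)=(1,\ell-3/2,\ell-1/2,2\ell-2)$, and I would then identify $(b)=\tau(a)$, $(c)=\sigma(a)$, $(d)=\sigma\tau(a)$, which reproduce exactly the metrics $(b),(c),(d)$ of the statement. Thus the four K\"ahler solutions form a single $\Gamma$-orbit, all lying on the locus $x_2\neq x_3$.

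Next I would factor the equation $r_2=r_3$. A direct computation gives
\[
r_2-r_3=\frac{x_3-x_2}{2x_2x_3}\left(1-\frac{(x_2+x_3)(x_1+x_4)}{(2\ell-1)x_1x_4}\right),
\]
so every solution lies on branch $(A)$ $x_2=x_3$, or branch $(B)$ $(2\ell-1)x_1x_4=(x_2+x_3)(x_1+x_4)$. On branch $(A)$, writing $x_2=x_3=t$, the equation $r_2=r_3$ holds identically and $r_1-r_4$ factors as $(x_1-x_4)\left(\frac{c}{4t^2}-\frac{1-c}{2x_1x_4}\right)$, splitting $(A)$ into $(A_1)$ $x_1=x_4$ and $(A_2)$ $x_1x_4=\frac{4t^2}{2\ell-3}$ with $x_1\neq x_4$. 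In $(A_1)$ the single surviving equation $r_1=r_2$ reduces (after scaling $x_1=x_4=1$) to $4t^2-(4\ell-2)t+(2\ell-1)=0$, whose discriminant $4(4\ell^2-12\ell+5)$ is positive precisely for $\ell\geq3$, yielding the two positive roots of (\ref{sol1}); these are the two explicit non-K\"ahler metrics, and the excluded case $\ell=2$ is exactly where this discriminant becomes negative. In $(A_2)$, substituting $x_4=\frac{4t^2}{(2\ell-3)x_1}$ into $r_1=r_2$ and scaling $t=1$ produces the quadratic $x_1^2-\frac{2\ell-1}{\ell-1}x_1+\frac{4}{2\ell-3}=0$; its two positive roots are $\sigma$-conjugate (their product equals $x_1x_4$), so they give one further $\sigma$-pair of non-K\"ahler metrics. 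Since $x_2=x_3$ throughout branch $(A)$, these four metrics are disjoint from the K\"ahler ones.

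It remains to prove that branch $(B)$ contributes exactly the four K\"ahler metrics and nothing else, and this is the main obstacle. Here I would impose $r_1=r_2$, $r_3=r_4$ and the relation $(B)$ on the open set $x_2\neq x_3$, clear denominators, and eliminate (by resultants or a Gr\"obner basis, with $\ell$ carried as a parameter) down to a single univariate polynomial; the four known K\"ahler roots factor out, and one must verify that the complementary factor has no admissible positive root with $x_2\neq x_3$ for $\ell\geq3$, for instance via a discriminant or Sturm analysis. The $\Gamma$-action reduces the candidate extra solutions on $(B)$ to the $\sigma\tau$-fixed locus $x_1=x_4$, $x_2\neq x_3$, which trims the elimination considerably. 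Granting this, the total count is $4$ (K\"ahler, branch $B$) $+\,2$ (branch $A_1$) $+\,2$ (branch $A_2$) $=8$, with exactly four non-K\"ahler metrics, two of which are the explicit family (\ref{sol1}); this establishes the theorem.
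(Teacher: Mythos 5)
Your branch-(A) analysis is correct and in fact matches what the paper does: the second Einstein equation in (\ref{syst1}) factors exactly as you say, your sub-branch $(A_1)$ is the paper's case $x_1=x_4=1,\ x_2=x_3$ (leading to $4t^2-2(2\ell-1)t+(2\ell-1)=0$ and the metrics (\ref{sol1})), and your sub-branch $(A_2)$ is the paper's case $x_1\neq x_4,\ x_2=x_3$; I have checked that your quadratic $x_1^2-\tfrac{2\ell-1}{\ell-1}x_1+\tfrac{4}{2\ell-3}=0$ is the correct reduction there and has two positive real roots for $\ell\geq 3$ (you assert but do not verify reality; the relevant discriminant numerator is $8\ell^3-36\ell^2+46\ell-19>0$ for $\ell\geq3$). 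The $\Gamma\cong(\mathbb{Z}_2)^2$ symmetry of (\ref{compII}) is also genuine and is a nice way to organize the four K\"ahler metrics as a single orbit.

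The genuine gap is the completeness of branch (B), which you yourself flag as "the main obstacle" and then only sketch. Two problems. First, your proposed reduction of the candidate extra solutions on (B) to the $\sigma\tau$-fixed locus $x_1=x_4$ is unsound: $\Gamma$ permutes solutions, but a $\Gamma$-orbit need not meet any fixed locus — the K\"ahler orbit itself is free — so a hypothetical extra solution with $x_1\neq x_4$ and $x_2\neq x_3$ cannot be moved onto $x_1=x_4$ by $\Gamma$, and the elimination you defer would have to be carried out in full generality. Second, the elimination itself is never performed, so the claimed count of eight is conditional. The paper closes exactly this gap by a different device: it analyzes the locus $x_1=x_4=1,\ x_2\neq x_3$ directly (system (\ref{syst3})), finds that it contributes two non-real complex solutions, and then invokes Graev's result \cite{Gr} that the total number of isolated complex invariant Einstein metrics is $\mathcal{E}(M)=10$. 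Since $4$ (K\"ahler) $+\,2+2$ (your branches $A_1$, $A_2$) $+\,2$ (non-real) already accounts for all ten complex solutions, no further real solution can exist. Without either Graev's count or an actually executed elimination on branch (B), your argument establishes "at least eight" but not "precisely eight".
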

 
 \begin{proof}
In order to find real solutions of  (\ref{syst1}) we distinguish the following cases. 

Let $x_1=x_4=1$ and $x_2=x_3$.  Then the second equation of (\ref{syst1}) 
is satisfied, and both the  first and  third equations  reduce to the   equation,  
 \[
 4x_3^2-2(2\ell-1)x_3+2\ell-1=0.
 \]
For $\ell\geq 3$ we get two real solutions given by $x_3=\displaystyle\frac{2\ell-1\pm\sqrt{4 \ell^2- 12\ell + 5 }}{4}$.

Let $x_1=1, x_1\neq x_4$ and $x_2=x_3$.  Then the second equation  in (\ref{syst1})  is 
satisfied,  and the other two reduce to the system
 \begin{equation}\label{syst2}
 \left.
 \begin{tabular}{r}
$4x_3^2-2(2\ell-1)x_3+x_4+2(\ell-1)=0$\\
$4x_3^2-2(2\ell-1)x_3x_4+2(\ell-1)x_4^2+x_4=0.$ 
\end{tabular}\right\} 
  \end{equation} 
By solving system (\ref{syst2}) we obtain two new real solutions.

Let $x_1=x_4=1$ and $x_2\neq x_3$. Then  system (\ref{syst1})  reduces to 
 \begin{equation}\label{syst3}
 \left.
 \begin{tabular}{r}
$(2\ell-5)x_3^2+(2\ell-1)x_2^2-4(\ell-2)x_2x_3+(4\ell-2)x_3-(2\ell-1)=0$\\
$2(x_2+x_3)-(2\ell-1)=0 $ \\
$(2\ell-5)x_2^2+(2\ell-1)x_3^2-4(\ell-2)x_2x_3+(4\ell-2)x_2-(2\ell-1)=0.$
\end{tabular}\right\} 
  \end{equation} 
By subtracting the first and the third equation of (\ref{syst3}) we get the second  equation.
By solving the system consists of the later and   the  first or  
third equation, we get the same two complex solutions.
 Since $\mathcal{E}(M)=10$ we conclude that    $M$
does not admit any other (non-K\"ahler) real Einstein metric.
\end{proof}

A similar method can be used to solve the Einstein equation for 
the flag manifold of Type IIa corresponding to $G=SO(2\ell)$, i.e. 
the space
$M=G/K=SO(2\ell)/U(1)\times U(1)\times SO(2\ell-4)$.  
Thus we  obtain the following theorem.

\begin{theorem}\label{The77}
 The space $M=SO(2\ell)/U(1)\times U(1)\times SO(2\ell-4)$ for $\ell\geq 3$   
admits (up to a scale) precisely eight $SO(2\ell+1)$-invariant 
  Einstein metrics. 
Four of them are K\"ahler, given by 
\[ 
 \begin{tabular}{ll}
   $(a) \ (1, \ \ell - 2, \ \ell - 1, \ 2\ell - 3),$ & $(b) \ (1, \ \ell-1, \  \ell-2, \ 2\ell-3 ),$ \\
   $(c) \ (  2\ell-3,  \ \ell-2 \ \ell-1, \ 1)$, & $(d) \ ( 2\ell-3, \   \ell-1, \ \ell-2,    \ 1),$ 
   \end{tabular}
   \]
and the other four are non-K\"ahler. Two of them are given 
explicity  as follows:
\begin{equation}\label{sol2}
   x_1=x_4=1, \quad   x_2=x_3= \displaystyle\frac{\ell-1 \pm \sqrt{\ell^2 - 4\ell +3}}{2}.
 \end{equation}
  \end{theorem}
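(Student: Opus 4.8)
The plan is to mirror the argument used for $SO(2\ell+1)$ in Theorem~\ref{The7}, since $M=SO(2\ell)/U(1)\times U(1)\times SO(2\ell-4)$ is again of Type IIa with exactly the same qualitative shape. First I would insert the dimensions $d_1=d_4=2$, $d_2=d_3=4(\ell-2)$ from Table~8 and the structure constants $c_{12}^3=c_{23}^4=(\ell-2)/(\ell-1)$ from Lemma~\ref{Lem2} into the Ricci components~(\ref{compII}), so that the Einstein system~(\ref{systemI}) takes the fully explicit polynomial form analogous to~(\ref{systemIIa}), with coefficients rational in $\ell$. After normalizing $x_1=1$, the decisive structural feature is the symmetry $d_1=d_4$, $d_2=d_3$, $c_{12}^3=c_{23}^4$: the involution $(x_1,x_2,x_3,x_4)\mapsto(x_4,x_3,x_2,x_1)$ preserves the system up to overall scaling, which both explains the pairing of solutions and justifies searching first on the invariant loci $x_2=x_3$ and $x_1=x_4$.

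The core is then a case analysis identical in spirit to that of Theorem~\ref{The7}. Setting $x_1=x_4=1$ and $x_2=x_3$ makes the middle equation $r_2-r_3=0$ vanish identically, while the two outer equations collapse to a single quadratic in $x_3$; solving it yields the two explicit non-K\"ahler metrics~(\ref{sol2}), whose reality follows from the discriminant $\ell^2-4\ell+3=(\ell-1)(\ell-3)$, nonnegative precisely for $\ell\geq 3$. Relaxing to $x_1=1$, $x_4\neq 1$, $x_2=x_3$ again annihilates the middle equation and leaves a $2\times 2$ quadratic subsystem whose solution provides two further real non-K\"ahler metrics. Finally, the complementary case $x_2\neq x_3$ with $x_1=x_4=1$ must be shown to contribute no new real solutions: subtracting the two symmetric outer equations reproduces the middle one, and the residual system is solved only by non-real complex pairs, exactly as in the odd-orthogonal computation.

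On the K\"ahler side I would invoke Theorems~\ref{The2} and~\ref{The4}: the four invariant complex structures furnish the four K\"ahler--Einstein metrics (a)--(d) of the statement (the metric of Theorem~\ref{The4}, its image under the above involution, and their two rescalings). Combining the four K\"ahler with the four non-K\"ahler families gives eight real $G$-invariant Einstein metrics up to scale; to sharpen ``at least eight'' to ``precisely eight'', I would appeal to Graev's count $\mathcal{E}(M)=10$ from~\cite{Gr}, the two genuinely complex solutions of the last case accounting for the remaining two, so that no further real metric can exist. The main obstacle I anticipate is the algebra of the $x_4\neq 1$ subsystem together with the verification that the last case yields only non-real roots for every $\ell\geq 3$. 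A secondary point deserving separate scrutiny is the low-rank boundary $\ell=3$, where $(\ell-1)(\ell-3)=0$ forces the two metrics in~(\ref{sol2}) to coincide (and where $SO(2\ell-4)$ degenerates), so the distinctness bookkeeping and the precise value of the $d_i$ should be confirmed there before asserting the count of eight.
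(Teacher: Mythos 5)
Your proposal follows essentially the same route as the paper, whose entire proof of this theorem is the remark that ``a similar method'' to Theorem~\ref{The7} applies: the same substitution of $d_1=d_4=2$, $d_2=d_3=4(\ell-2)$, $c_{12}^3=c_{23}^4=(\ell-2)/(\ell-1)$ into (\ref{compII}), the same case analysis on $x_2=x_3$ versus $x_2\neq x_3$ and $x_1=x_4$ versus $x_1\neq x_4$ (the first case collapsing to the quadratic $2x_3^2-2(\ell-1)x_3+(\ell-1)=0$, which indeed yields (\ref{sol2}) with discriminant $(\ell-1)(\ell-3)$), and the same appeal to Graev's count $\mathcal{E}(M)=10$ to promote ``at least eight'' to ``precisely eight.'' The one slip is your parenthetical gloss on the K\"ahler--Einstein metrics --- $(a),(d)$ and $(b),(c)$ are two involution pairs and $(b)$ is not a rescaling of $(a)$, so all four must be obtained by computing the Koszul form for each of the four invariant orderings as in Theorem~\ref{The4} --- while your flag of the $\ell=3$ degeneration, where the two metrics in (\ref{sol2}) coincide with the normal metric, is a legitimate point that the paper itself only addresses in a remark after the theorem.
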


Note that for $\ell=3$ we obtain the full flag manifold 
$M=SO(6)/U(1)\times U(1)\times SO(2)\cong SO(6)/SO(2)\times SO(2)\times SO(2)$,
which is a normal homogeneous Einstein manifold (according to \cite[p.~568]{Wa1}). Indeed, for 
$\ell=3$  the Einstein metric (\ref{sol2}) reduces to the normal metric $g=(1, 1, 1, 1)$.
 
 \smallskip
$\bullet$ {\bf Type IIb.}
We now come to  generalized flag manifolds $M=G/K$ of Type IIb.  
An invariant metric $g=(x_1, x_2, x_3, x_4)$ of $M$ is 
Einstein if and only if, it is a solution of system (\ref{systemI}), 
where $r_{i}$ $(i=1, \ldots, 4)$ are given by (\ref{compIII}).
In this case system  (\ref{systemI}) reduces to the following system of 
non linear polynomial equations:
 \begin{equation}\label{syst4}
 \left.
 \begin{tabular}{r}
  $d_1d_2x_3x_4(x_2-x_1)+c_{12}^3x_4(d_1+d_2)(x_1^2-x_2^2)+c_{12}^3x_3^2x_4(d_1-d_2)$\\
  $+c_{13}^4d_2x_2(x_1^2-x_3^2-x_4^2)=0 $\\
  $d_2d_3x_1x_4(x_3-x_2)+c_{12}^3x_4(d_2+d_3)(x_2^2-x_3^2)+c_{12}^3x_1^2x_4(d_2-d_3)$\\
  $-c_{13}^4d_2x_2(x_3^2-x_1^2-x_4^2)=0$\\
  $d_3d_4x_1x_2(x_4-x_3)+c_{13}^4x_2(d_3+d_4)(x_3^2-x_4^2)+c_{13}^4x_1^2x_2(d_3-d_4)$\\
  $+c_{12}^3d_4x_4(x_3^2-x_2^2-x_1^2)=0,$ 
  \end{tabular}\right\}
  \end{equation} 
 	where $d_{i}$ $(i=1, \ldots, 4)$ and $c_{12}^{3}, c_{13}^{4}$ 
 	are determined from Table 8 and  Lemma \ref{Lem2}, respectively.

Let $G=SO(2\ell)$, that is $M=SO(2\ell)/U(p)\times U(\ell-p)$ 
with $\ell\geq 4$ and  $2\leq p \leq \ell-2$.   
Then system (\ref{syst4})   becomes
 \begin{equation}\label{syst5}
 \left.
 \begin{tabular}{r}
$4(\ell-1) x_3 x_4 (x_2-x_1)+ (\ell+p-1)x_4(x_1^2-x_2^2)- (\ell-3p-1)x_3^2x_4$\\
$ +(p-1)x_2(x_1^2-x_3^2-x_4^2)=0$\\
$4(\ell-1)x_1x_4(x_3-x_2)+ (\ell+p-1)x_4(x_2^2-x_3^2)+ (\ell-3p-1)x_1^2x_4$\\
$ -(p-1)x_2(x_3^2-x_1^2-x_4^2)=0$\\
$4(\ell-1)x_1x_2(x_4-x_3)+ (2\ell-p-1)x_2(x_3^2-x_4^2)+ (2\ell-3p+1)x_1^2x_2$\\
$ +(\ell-p-1)x_4(x_3^2-x_1^2-x_2^2)=0,$\\
\end{tabular}\right\} 
  \end{equation} 
System (\ref{syst5})  is quite complicated, so we consider the following cases.

Let  $x_2=x_4=1$  and $x_1=x_3$.  Then system (\ref{syst5}) 
reduces to  
\begin{equation}\label{gen1}
 \left.
 \begin{tabular}{r}
$4(\ell-p-1)x_1^2-4(\ell-1)x_1+(\ell+2p-2)=0$\\
$4(p-1)x_1^2-4(\ell-1)x_1+(3\ell-2p-2)=0$\\
\end{tabular}\right\} 
  \end{equation} 
By comparing these equations  it follows that system (\ref{gen1}) is solvable only when  $\ell=2p$.   

If  $x_2=x_4=1$ and  $x_1\neq x_3$,  then system (\ref{syst5}) reduces to the 
 following system of three equations and two unknowns
 \begin{equation}\label{gen2}
 \left.
 \begin{tabular}{r}
$4(\ell-1)x_1x_3-4(\ell-1)x_3+(\ell-2p-2)x_3^2-(\ell+2p-2)x_1^2+(\ell+2p-2)=0$\\
$4(\ell-1)x_1x_3-4(\ell-1)x_1-(\ell+2p-2)x_3^2+(\ell-2p-2)x_1^2+(\ell+2p-2)=0$\\
$4(\ell-1)x_1x_3-4(\ell-1)x_1-(3\ell-2p-2)x_3^2-(\ell-2p+2)x_1^2+(3\ell-2p-2)=0$
\end{tabular}\right\} 
  \end{equation}  
   By  using the second and the third equation of (\ref{gen2})
 we conclude that this sytem is also solvable only when $\ell=2p$. This mean that we have to exam  the case $\ell=2p$ separately.  We obtain the following theorem:
 
\begin{theorem}\label{The8}
The flag manifold $M=SO(4p)/U(p)\times U(p)$ $(p\geq 2)$ 
admits  at least six (up to scale) $SO(4p)$-invariant   Einstein metrics.
Four of them are K\"ahler,  given by
\[ 
 \begin{tabular}{ll}
   $(a) \ (p, \  p-1, \ 2p-1, \ 3p-1),$ & $(b) \ (p, \ 3p-1, \  2p-1, \ p-1 ),$ \\
   $(c) \ (  2p-1,  \ 3p-1, \ p, \ p-1)$, & $(d) \ ( 2p-1, \   p-1, \ p,    \ 3p-1),$ 
   \end{tabular}
   \]
   and the  other two are non-K\"ahler,  explicity given by  
   \begin{equation}\label{sol3}
x_2=x_4=1, \quad x_1=x_3=\displaystyle\frac{2p-1\pm\sqrt{2p-1}}{2(p-1)}.
\end{equation}  
   In the special case where $2\leq p\leq 6$, 
   $M$ admits precisely eight (up to scale) $SO(4p)$-invariant Einstein metrics.  
   In this case,   
   two new (non-K\"ahler)   Einstein metrics are
     given  by   
     \begin{equation}\label{sol4}
 \left.
 \begin{tabular}{l}
$x_2=1,   \quad x_4=\displaystyle\frac{ 7 p^3 - p^2 - 3 p + 1 \pm 
 2 (2 p - 1)\sqrt{
  2 p (-p^3 + 7 p^2 - 5 p + 1)}}{(p-1) (3p-1)^2} $\\\\
$x_1=x_3=\displaystyle\frac{4 p^2-2 p   \pm 
 \sqrt{2(7 p^3-5 p^2+p-p^4})}{2 (3p^2-4p+1)}=\sqrt{\frac{p}{2(p-1)}x_4}.$ 
\end{tabular}\right\} 
  \end{equation}  
\end{theorem}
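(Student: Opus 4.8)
The plan is to exploit the extra symmetry that appears precisely when $\ell=2p$. First I would specialize Table 8 and Lemma \ref{Lem2}(2) to $\ell=2p$, recording $d_1=d_3=2p^2$, $d_2=d_4=p(p-1)$ and $c_{12}^3=c_{13}^4=\frac{p^2(p-1)}{2(2p-1)}$. With these coincidences the system (\ref{syst5}) becomes invariant under the involution $(x_1,x_2,x_3,x_4)\mapsto(x_3,x_4,x_1,x_2)$, which interchanges $r_1\leftrightarrow r_3$ and $r_2\leftrightarrow r_4$ and is realized by an automorphism of $G$. This involution organizes the whole solution set and will later explain the pairing of the K\"ahler--Einstein metrics into isometric pairs.

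For the four K\"ahler--Einstein metrics I would invoke Theorem \ref{The2}, which gives four invariant complex structures and hence four K\"ahler--Einstein metrics. Substituting $\ell=2p$ into the metric of Theorem \ref{The4} yields metric $(a)$, and the analogous computation for the remaining three $\fr{t}$-chambers yields $(b)$, $(c)$, $(d)$. Being K\"ahler--Einstein they automatically solve (\ref{syst5}); one checks directly that each satisfies $x_1\neq x_3$ and that the involution interchanges $(a)\leftrightarrow(c)$ and $(b)\leftrightarrow(d)$.

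The non-K\"ahler metrics I would look for under the symmetric ansatz $x_1=x_3$, which makes $r_1=r_3$ hold identically. The surviving equation $r_2=r_4$ then factors as $(x_2-x_4)\cdot Q=0$. On the branch $x_2=x_4$ (normalized to $x_2=x_4=1$) the first and third equations of (\ref{syst5}) collapse, exactly because $\ell=2p$, to the single quadratic $2(p-1)x_1^2-2(2p-1)x_1+(2p-1)=0$; its discriminant equals $2p-1>0$, so its two roots give the metrics (\ref{sol3}) for every $p\geq 2$. On the branch $x_2\neq x_4$, the factor $Q=0$ yields the relation $x_1^2=\frac{p}{2(p-1)}\,x_2x_4$; substituting this into $r_1=r_2$ and normalizing $x_2=1$ reduces the problem to a quadratic whose solutions are (\ref{sol4}). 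The relevant discriminant is proportional to $-p^3+7p^2-5p+1$, which is positive for $1\leq p\leq 6$ and negative for $p\geq 7$, so these two real metrics exist precisely when $2\leq p\leq 6$. Together the four K\"ahler metrics and the two metrics (\ref{sol3}) give the lower bound of six, and for $2\leq p\leq 6$ the metrics (\ref{sol4}) raise the total to eight.

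The hard part will be upgrading ``at least six'' to ``precisely eight.'' Here I would partition all solutions by the two dichotomies $x_1=x_3$ or not, and $x_2=x_4$ or not. The three special branches each reduce to a single quadratic and therefore contain exactly two complex solutions: the branch $(x_1=x_3,\ x_2=x_4)$ gives (\ref{sol3}) (always real), the branch $(x_1=x_3,\ x_2\neq x_4)$ gives (\ref{sol4}) (real iff $2\leq p\leq 6$), and the branch $(x_1\neq x_3,\ x_2=x_4)$ forces $x_1+x_3=\frac{2p-1}{p-1}$ from $r_1=r_3$ while $r_1=r_2$ determines $x_1x_3$. Invoking Graev's count $\mathcal{E}(M)=10$ of complex invariant Einstein metrics \cite{Gr}, the generic branch $(x_1\neq x_3,\ x_2\neq x_4)$ then contains exactly $10-6=4$ solutions, namely the four K\"ahler metrics, and no others. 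The decisive step is to verify that the two solutions of the third branch are a non-real conjugate pair for all $p$, equivalently that there the determined value of $x_1x_3$ exceeds $\big((x_1+x_3)/2\big)^2$, i.e. the associated discriminant is negative. Once this inequality is established, the ten complex solutions split as eight real (four K\"ahler, two from (\ref{sol3}), two from (\ref{sol4})) plus one non-real pair when $2\leq p\leq 6$, giving exactly eight real Einstein metrics as claimed.
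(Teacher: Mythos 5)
Your proposal follows essentially the same route as the paper: the same case split on $x_1=x_3$ versus $x_1\neq x_3$ and $x_2=x_4$ versus $x_2\neq x_4$, the same quadratics on each special branch (your relation $x_1^2=\tfrac{p}{2(p-1)}x_2x_4$ is exactly what underlies the paper's system (\ref{syst6})), and the same appeal to Graev's count $\mathcal{E}(M)=10$ to rule out further real solutions. The one step you flag but do not carry out --- that the branch $x_1\neq x_3$, $x_2=x_4=1$ yields a non-real conjugate pair --- does check out: there $r_1=r_3$ forces $x_1+x_3=\tfrac{2p-1}{p-1}$, the remaining equation gives $x_1x_3=\tfrac{p(2p-1)(3p-2)}{2(3p-1)(p-1)^2}$, and hence $(x_1+x_3)^2-4x_1x_3=\tfrac{(2p-1)(1-p)}{(p-1)^2(3p-1)}<0$ for all $p\geq 2$, which is exactly what the paper asserts when it says this branch produces two complex solutions.
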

\begin{proof}
 For $\ell=2p$ we obtain the flag manifold  
  $M=SO(4p)/U(p)\times U(p)$, 
 and according to  \cite{Gr}  it is $\mathcal{E}(M)=10$. 
 
 For $\ell=2p$
 system  (\ref{gen1})  reduces to the equation
 \[
 4(p-1)x_1^2-4(2p-1)x_1+4p-2=0.
 \]
For $p\geq 2$ we 
find two real solutions, given by $x_1=\displaystyle\frac{2p-1\pm\sqrt{2p-1}}{2(p-1)}$. 
  
For $\ell=2p$ system (\ref{gen2}) reduces to
\[
 \left.
 \begin{tabular}{r}
$4(2p-1)x_1x_3-4(2p-1)x_3-2(2p-1)x_1^{2}-2x_3^{2}+4p-2=0$\\
$4(2p-1)x_1x_3-4(2p-1)x_1-2(2p-1)x_3^{2}-2x_1^{2}+4p-2=0,$ 
  \end{tabular}\right\} 
  \]
and by solving it we obtain two complex solutions.  

 Finally, we consider the case $x_2=1, x_2\neq x_4$ and $x_1=x_3$.  
Then   for $\ell=2p$ system (\ref{syst5}) reduces to  
\begin{equation}\label{syst6}
 \left.
 \begin{tabular}{r}
$ 4(p-1)x_1^2-4(2p-1)x_1+(p-1)x_4+3p-1=0$\\
$4(p-1)x_1^2+(3p-1)x_4^2-4(2p-1)x_1x_4+(p-1)x_4=0.$
\end{tabular}\right\} 
  \end{equation} 
 By solving (\ref{syst6}) for $2\leq p\leq 6$ we obtain two new  real  solutions given by
 \[
 \left.
 \begin{tabular}{l}
$  x_4=\displaystyle\frac{ 7 p^3 - p^2 - 3 p + 1 \pm 
 2 (2 p - 1)\sqrt{
  2 p (-p^3 + 7 p^2 - 5 p + 1)}}{(p-1) (3p-1)^2} $\\\\
$x_1=\displaystyle\frac{4 p^2-2 p   \pm 
 \sqrt{2(7 p^3-5 p^2+p-p^4})}{2 (3p^2-4p+1)}=\sqrt{\frac{p}{2(p-1)}x_4}.$ 
\end{tabular}\right\} 
  \]
  
Since $\mathcal{E}(M)=10$ we 
conclude that when  $2\leq p\leq 6$ there no other  
real (non-K\"ahler) invariant  Einstein metrics.  
 \end{proof}

We remark that the K\"ahler-Einstein metrics $(a)-(d)$, and the non-Kahler metrics (\ref{sol3}) and (\ref{sol4}) 
were also obtained by the first author in \cite[Theorem.~9]{Arv}, but here we give a corrected version.
 
 
 Let now return to the general space  $M=SO(2\ell)/U(p)\times U(\ell-p)$ and 
 examine system (\ref{syst5})  when $x_2=1, x_2\neq x_4$, and $x_1=x_3$.
 In this case  we obtain the system
\begin{equation}\label{sakane}
 \left.
 \begin{tabular}{r}
$4(\ell-p-1)x_1^2-4(\ell-1)x_1+(p-1)x_4+(\ell+p-1)=0$\\
$ 4(\ell-1)x_1x_4-4(p-1)x_1^2-(2\ell-p-1)x_4^2-(\ell-p-1)x_4=0.$\\
\end{tabular}\right\}
\end{equation}  
 
 \begin{lemma}\label{Lem3}
 If $\ell\geq 4$ and $2\leq p\leq \ell-2$, then system (\ref{sakane}) admits at least two positive real solutions.
 \end{lemma}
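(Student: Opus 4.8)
The plan is to eliminate $x_4$ from (\ref{sakane}), reduce the system to a single quartic in $x_1$, and then locate two roots of that quartic in the region where the second coordinate stays positive, using the intermediate value theorem. Throughout set $A=\ell-p-1$, $B=\ell-1$, $C=\ell+p-1$, $D=p-1$ and $E=2\ell-p-1$; under the hypotheses $\ell\geq 4$ and $2\leq p\leq\ell-2$ all five are positive. Since $p\geq 2$, the first equation of (\ref{sakane}) is solved for $x_4$ by the downward parabola
\[
x_4=\phi(x_1):=\frac{N(x_1)}{D},\qquad N(x_1):=-4Ax_1^{2}+4Bx_1-C .
\]
The decisive elementary identity is $B^{2}-AC=(\ell-1)^{2}-(\ell-p-1)(\ell+p-1)=p^{2}$, so the roots of $N$ are the rational numbers $x_1=\tfrac12$ and $x_1=u:=\tfrac{\ell+p-1}{2(\ell-p-1)}$. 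As $u-\tfrac12=\tfrac{p}{\ell-p-1}>0$, the interval $I=(\tfrac12,u)$ is nonempty, and $\phi>0$ precisely on $I$. Hence any $x_1\in I$ yields the positive candidate $(x_1,1,x_1,\phi(x_1))$ once the second equation also holds.

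Substituting $x_4=\phi(x_1)$ into the second equation of (\ref{sakane}) and clearing the positive factor $D^{2}$ produces the quartic
\[
F(x_1)=-E\,N^{2}+(4Bx_1-A)\,D\,N-4D^{3}x_1^{2},
\]
whose leading coefficient $-16EA^{2}$ is negative and whose roots in $I$ are exactly the sought solutions. At both endpoints of $I$ one has $N=0$, so
\[
F(\tfrac12)=-D^{3}<0,\qquad F(u)=-4D^{3}u^{2}<0 .
\]
A direct differentiation, using $N'(\tfrac12)=4(B-A)=4p$ and $2B-A=C$, gives
\[
F'(\tfrac12)=(2B-A)\,D\,N'(\tfrac12)-4D^{3}=4D\bigl(Cp-D^{2}\bigr)=4(p-1)\bigl(p(\ell+1)-1\bigr)>0 ,
\]
so $F$ leaves the left endpoint strictly increasing. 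The logic is now clear: if I can exhibit one point $x^{\ast}\in I$ with $F(x^{\ast})>0$, then the intermediate value theorem, applied separately on $(\tfrac12,x^{\ast})$ and on $(x^{\ast},u)$, yields two \emph{distinct} roots of $F$ in $I$; each lies strictly inside $I$, where $\phi$ is positive, so each produces a genuine positive solution and the lemma follows.

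The main obstacle is the \emph{uniform} construction of a witness $x^{\ast}$, since its location genuinely depends on $(\ell,p)$: for large $\ell$ the vertex $x_1=\tfrac{B}{2A}$ of $\phi$ (always interior to $I$) already gives $F>0$, whereas for small $\ell$ (for instance $\ell=4,\,p=2$) the sign change is confined to a short interval just to the right of $\tfrac12$. I would therefore make the behaviour near the left endpoint quantitative: by Taylor's theorem
\[
F(\tfrac12+h)=-D^{3}+4(p-1)\bigl(p(\ell+1)-1\bigr)h+\tfrac12 F''(\sigma)h^{2}\qquad(\sigma\in(\tfrac12,\tfrac12+h)),
\]
and I would bound $\max_{[\frac12,\frac12+h]}|F''|$ by an explicit polynomial $M(\ell,p)$; taking $h$ near the maximizer $F'(\tfrac12)/M$ then forces the linear term to outweigh the constant deficit $D^{3}$ and the quadratic remainder. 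The resulting sufficient condition is a polynomial inequality in $\ell,p$ whose dominant balance, after cancelling common factors, is the true statement $(p-1)^{2}<p^{2}$.

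Pushing this rough balance to a rigorous inequality valid for \emph{every} admissible pair $(\ell,p)$ — so that the remainder never swamps the linear gain, which is delicate since the margin $p^{2}/(p-1)^{2}$ is thin for large $p$ — is the heart of the argument, while everything preceding it is routine. An alternative I would keep in reserve is purely geometric: the first equation traces an arc of a downward parabola from $(\tfrac12,0)$ to $(u,0)$, the second is a branch of a hyperbola through the origin (its quadratic part has determinant $4(DE-B^{2})=-4(\ell-p)^{2}<0$), and one counts their intersections inside the open positive quadrant.
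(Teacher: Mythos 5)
Your reduction is exactly the one the paper uses: solve the first equation of (\ref{sakane}) for $x_4$, observe that positivity of $x_4$ confines $x_1$ to the interval $I=\bigl(\tfrac12,\tfrac{\ell+p-1}{2(\ell-p-1)}\bigr)$, substitute to get a quartic $F$ with negative leading coefficient and $F<0$ at both endpoints, and then hunt for an interior point where $F>0$ so that the intermediate value theorem yields two roots. Your identity $B^2-AC=p^2$, the endpoint values, and the computation $F'(\tfrac12)=4(p-1)\bigl(p(\ell+1)-1\bigr)>0$ are all correct. But the proof stops precisely at the step you yourself identify as ``the heart of the argument'': you never actually exhibit a point $x^\ast\in I$ with $F(x^\ast)>0$. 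The Taylor-remainder scheme is only described, not executed — no bound $M(\ell,p)$ on $F''$ is produced, no choice of $h$ is verified to lie inside $I$, and no final inequality is checked over the whole range $\ell\geq 4$, $2\leq p\leq\ell-2$. As it stands this is a plan, not a proof.

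The paper closes this gap by evaluating $F$ at the explicit midpoint $\zeta=\tfrac12\bigl(\tfrac12+\tfrac{\ell+p-1}{2(\ell-p-1)}\bigr)$, obtaining $F(\zeta)=\frac{(\ell-1)\,Q(\ell,p)}{2(\ell-p-1)^2}$ with $Q(\ell,p)=(\ell-1-p)(2p-1)(p-1)-p(3p-1)$, and then running a short case analysis on when $Q>0$; the finitely many exceptional pairs (e.g.\ $p=2$ with $\ell\leq 6$, $(\ell,p)=(6,3),(7,4)$, and $p=\ell-2$) are settled by direct inspection of the quartic. Note that this confirms your own worry: no single uniform witness works — for instance $F(\zeta)<0$ when $p=2$ and $\ell=6$ — so any completion of your Taylor argument anchored solely at the left endpoint would have to be supplemented by exactly the kind of case-splitting the paper performs. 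Until you either carry out such an analysis or supply an explicit family of test points with verified sign, the lemma is not established.
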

 
 \begin{proof}
  The first equation
 of (\ref{sakane}) gives that $x_4=\displaystyle\frac{(2x_1-1)(\ell+p-1-2(\ell-p-1)x_1)}{p-1}$, 
 so for $\displaystyle\frac{1}{2} < x_1 < \displaystyle\frac{\ell+p-1}{2(\ell-p-1)}$, it follows that $x_4>0$.
 By substituting  this value  into the second equation of (\ref{sakane})  we obtain the   equation 
 \begin{eqnarray}\label{function}
 F(x_1) &=& -8(\ell-p-1)^{2}(2\ell-p-1)x_1^{4}+8(\ell-1)(4\ell-3p-1)(\ell-p-1)x_1^{3} \\ \nonumber
 && -2(12\ell^3-11p\ell^2-25\ell^2-2p^{2}\ell+20p\ell+14\ell+2p^3-2p^2-6p-2)x_1^{2} \\ \nonumber
 && +4(\ell-1)(2\ell^2-2\ell-p^2+p)x_1+(1-\ell)\ell(\ell+p-1)=0.
 \end{eqnarray}
 Note that $F(1/2)=\displaystyle\frac{-(p-1)^{3}}{2}<0$, and 
 $F(\displaystyle\frac{\ell+p-1}{2(\ell-p-1)})=-\displaystyle\frac{(p-1)^{3}(\ell+p-1)^{2}}{2(\ell-p-1)^{2}}<0$.
 Set 
 \begin{equation}\label{zeta}
 \zeta=\displaystyle\frac{1}{2}\Big(\frac{1}{2}+\frac{\ell+p-1}{2(\ell-p-1)}\Big).
 \end{equation}
 Then $\displaystyle\frac{1}{2}<\zeta<\displaystyle\frac{\ell+p-1}{2(\ell-p-1)}$ and we
  claim that $F(\zeta)>0$. Indeed, by substituting $\zeta$  into (\ref{function})  we obtain that
\begin{equation}\label{fun}
F(\zeta)=\displaystyle\frac{(\ell-1)\cdot Q(\ell, p)}{2(\ell-p-1)^2},
\end{equation}
where   $Q(\ell, p)=-2p^3+2\ell p^2-2p^2-3\ell p+3p+\ell-1 = (\ell-1-p)(2p-1)(p-1)-p(3p-1)$. 
Thus, for $\ell-1-p\geq 2$,  we see that 
\[
Q(\ell, p)\geq  2(2p-1)(p-1)-p(3p-1)=p^{2}-5p+2,
\]
so $Q(\ell,p)>0$ for  $p\geq 5$, and $F(\zeta)>0$ for $5\leq p\leq \ell-3$. 
We  conclude that equation (\ref{function}) 
has at least two positive   solutions $ x_1^a$ and  $ x_1^b$, 
with  $\displaystyle\frac{1}{2} <  x_1^a <\zeta$,
and   $\zeta <  x_1^b< \displaystyle\frac{\ell+p-1}{2(\ell-p-1)}$. 

For $\ell-1-p\geq 3$ we obtain that
\[
Q(\ell, p)\geq  3(2p-1)(p-1)-p(3p-1)=3p^{2}-8p+3,
\]
so $Q(\ell,p)>0$   for $p\geq 3$, and  it is $F(\zeta)>0$ for $3\leq p\leq \ell-4$. 
We conclude that (\ref{function}) has at least two positive  solutions $x_1^a$ and  $x_1^b$,  
  with  $\displaystyle\frac{1}{2} < x_1^a <\zeta$,  and   $\zeta < x_1^b< \displaystyle\frac{\ell+p-1}{2(\ell-p-1)}$.

We now examine the case  $p=2$.  Then (\ref{zeta}) 
gives that $\zeta=\displaystyle\frac{\ell+1}{2(\ell-3)}$, and (\ref{fun}) reduces to  $F(\zeta)=\displaystyle \frac{(\ell-1)(3\ell-19)}{2(\ell-3)^{2}}$, 
which is positive for $\ell\geq 7$.  
Thus, for  $p=2$ and $\ell\geq 7$,  
equation (\ref{function}) admits at least  two solutions $x_1^a, x_1^b$ 
with $\displaystyle\frac{1}{2} < x_1^a <\displaystyle\frac{\ell+1}{2(\ell-3)}$ 
 and   $\displaystyle\frac{\ell+1}{2(\ell-3)} < x_1^b< \displaystyle\frac{\ell+p-1}{2(\ell-p-1)}$.   
 For $\ell=4, 5, 6$ and $p=2$, we can easily see that 
  (\ref{function}) has  
four positive solutions which satisfy 
$\displaystyle\frac{1}{2} < x_1 < \displaystyle\frac{\ell+p-1}{2(\ell-p-1)}$. 

The other cases we have to check that 
$F(\zeta)>0$, are $(\ell, p)=(6, 3)$,  $(\ell, p)=(7, 4)$, and $p=\ell-2$.
For the first two cases we see that  (\ref{function}) has four solutions, 
which satisfy 
$\displaystyle\frac{1}{2} < x_1 < \displaystyle\frac{\ell+p-1}{2(\ell-p-1)}$.  
Finally, we consider the case $p=\ell-2$.  
Then,   $\displaystyle\frac{\ell+p-1}{2(\ell-p-1)}$
is given by $\displaystyle\frac{1}{2}(2\ell-3)$, and we see 
that $\displaystyle\frac{1}{2}(2\ell-3)>\displaystyle\frac{\ell}{2}$, for $\ell\geq 4$.  
By substituting $x_1=\ell/2$ in (\ref{function}), 
we get $F(\ell/2)=\ell(\ell-3)^{2}>0$. 
Therefore $F(x_1)=0$ has at least two solutions for  
$\displaystyle\frac{1}{2} < x_1 <  \displaystyle\frac{1}{2}(2\ell-3)$.  
We remark that when   $p=\ell-2$ and  $\ell\leq 6$, then 
equation $F(x_1)=0$ has   four solutions, but for $\ell\geq 7$ the equation 
$F(x_1)=0$ might has only 2 positive real solutions.
 \end{proof}
 
 From Lemma \ref{Lem3} it follows that $M=SO(2\ell)/U(p)\times U(\ell-p)$ admits 
 at least two (real) $G$-invariant Einstein metrics of the form $g=(x_1, 1, x_1, x_4)$.
 Since any K\"ahler-Einstein metric on $M$ is given by four distinct parameters $x_1, x_2, x_3, x_4$, 
 we obtain the following existence theorem.
 
 \begin{theorem}\label{existence}
 Let $M=SO(2\ell)/U(p)\times U(\ell-p)$ with $\ell\geq 4$ and $2\leq p\leq \ell-2$.  Then $M$ admits at least two 
   non-K\"ahler $SO(2\ell)$-invariant Einstein metrics.  
 \end{theorem}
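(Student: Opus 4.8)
The plan is to derive the statement directly from Lemma \ref{Lem3}, so the work reduces to interpreting the algebraic solutions produced there as genuine Einstein metrics and then separating them from the K\"ahler ones. First I would recall the setup: for $M=SO(2\ell)/U(p)\times U(\ell-p)$ of Type IIb, an invariant metric $g=(x_1,x_2,x_3,x_4)$ is Einstein precisely when it solves system (\ref{systemI}), which in this case becomes system (\ref{syst5}). Restricting to the slice $x_2=x_4=1$, $x_1=x_3$ is legitimate because this Ansatz reduces (\ref{syst5}) to system (\ref{sakane}); any positive solution $(x_1,x_4)$ of (\ref{sakane}) with the normalization $x_2=1$ therefore yields a metric $g=(x_1,1,x_1,x_4)$ that is automatically Einstein. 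Thus the existence of Einstein metrics is handed to me by Lemma \ref{Lem3}.

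The central step is to invoke Lemma \ref{Lem3}, which asserts that for $\ell\geq 4$ and $2\leq p\leq\ell-2$ the system (\ref{sakane}) has at least two positive real solutions. For each such solution $x_1$ in the admissible interval $\tfrac12<x_1<\tfrac{\ell+p-1}{2(\ell-p-1)}$, the accompanying value
\[
x_4=\frac{(2x_1-1)\bigl(\ell+p-1-2(\ell-p-1)x_1\bigr)}{p-1}
\]
is strictly positive by construction, so both coordinates are positive and the resulting $g=(x_1,1,x_1,x_4)$ is a bona fide $G$-invariant Riemannian metric in the sense of (\ref{Inva}), hence a genuine (non-complex) Einstein metric. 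This gives at least two real invariant Einstein metrics.

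The remaining point — and the only place where a little care is needed — is to check that these metrics are \emph{non-K\"ahler}, so that they are new. Here I would argue by the shape of the parameters: by Theorem \ref{The4} (and its three conjugate analogues under the four $\fr{t}$-chambers of Theorem \ref{The2}), every $G$-invariant K\"ahler-Einstein metric on $M$ has four pairwise distinct entries $x_1,x_2,x_3,x_4$, coming from the distinct Koszul values $\tfrac{\ell}{2},\,\ell-p-1,\,\tfrac{3\ell}{2}-p-1,\,2\ell-p-1$. By contrast the metrics produced above satisfy $x_1=x_3$ identically. Since a metric with two equal coordinates cannot coincide (even up to the overall scaling that identifies homothetic metrics) with one whose four coordinates are all different, none of the metrics $g=(x_1,1,x_1,x_4)$ is K\"ahler. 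This establishes that $M$ admits at least two non-K\"ahler $SO(2\ell)$-invariant Einstein metrics, as claimed.

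I expect the main obstacle to lie entirely inside Lemma \ref{Lem3} rather than in the present theorem: the substantive analytic content is the quartic sign analysis showing $F(1/2)<0$, $F\bigl(\tfrac{\ell+p-1}{2(\ell-p-1)}\bigr)<0$, and $F(\zeta)>0$ at the midpoint $\zeta$ of (\ref{zeta}), which forces two sign changes and hence two roots of (\ref{function}) in the admissible interval. Since that lemma is already available to me, the proof of Theorem \ref{existence} itself is short and amounts to the bookkeeping above — the one genuine verification being the non-K\"ahler claim, which I would handle via the coordinate-distinctness comparison just described.
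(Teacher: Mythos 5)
Your proposal is correct and follows essentially the same route as the paper: the paper likewise derives Theorem \ref{existence} immediately from Lemma \ref{Lem3}, obtaining two Einstein metrics of the form $g=(x_1,1,x_1,x_4)$ and ruling out the K\"ahler possibility by noting that the K\"ahler--Einstein metrics do not have this degenerate coordinate pattern. Your additional bookkeeping (positivity of $x_4$ on the admissible interval, the explicit comparison with the Koszul values) only makes explicit what the paper leaves implicit.
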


 A similar analysis can be applied to the symplectic
 flag manifold $M=Sp(\ell)/U(p)\times U(\ell-p)$ ($1\leq p\leq \ell-1$). 
 Then system (\ref{syst4}) becomes 
\begin{equation}\label{syst8}
 \left.
 \begin{tabular}{r}
$4(\ell+1) x_3 x_4 (x_2-x_1)+ (\ell+p+1)x_4(x_1^2-x_2^2)- (\ell-3p+1)x_3^2x_4$\\
$ +(p+1)x_2(x_1^2-x_3^2-x_4^2)=0$\\
$4(\ell+1)x_1x_4(x_3-x_2)+ (\ell+p+1)x_4(x_2^2-x_3^2)+ (\ell-3p+1)x_1^2x_4$\\
$ -(p+1)x_2(x_3^2-x_1^2-x_4^2)=0$\\
$4(\ell+1)x_1x_2(x_4-x_3)+ (2\ell-p+1)x_2(x_3^2-x_4^2)+ (2\ell-3p-1)x_1^2x_2$\\
$ +(\ell-p+1)x_4(x_3^2-x_1^2-x_2^2)=0.$ 
\end{tabular}\right\} 
  \end{equation} 
   
 For $\ell=2p$, that is $M=Sp(2p)/U(p)\times U(p)$, we obtain the following theorem.
   
\begin{theorem}\label{The9}
Let $M=Sp(2p)/U(p)\times U(p)$, $p\geq 1$.  Then $M$ admits precisely
six $Sp(2p)$-invariant Einstein metrics.  Four of them are K\"ahler, 
given by
 \[ 
 \begin{tabular}{ll}
   $(a) \ (p, \  p+1, \ 2p+1, \ 3p+1),$ & $(b) \ (p, \ 3p+1, \  2p+1, \ p+1 ),$ \\
   $(c) \ (  2p+1,  \ 3p+1, \ p, \ p+1)$, & $(d) \ ( 2p+1, \   p+1, \ p,    \ 3p+1).$ 
   \end{tabular}
   \] 
   The two non-K\"ahler 
metrics are given by 
\begin{equation}\label{sol5}
x_2=x_4=1, \quad x_1=\frac{6p^3+11p^2+ 6p +1\pm A}{2(p+1)^2 (3p+1)}, \quad x_3=\frac{6p^3+11p^2+ 6p +1\mp A}{2(p+1)^2 (3p+1)},
\end{equation}
where $A=\sqrt{(p+1)^3 (6 p^2+5p+1)}$.
\end{theorem}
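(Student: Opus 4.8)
The plan is to solve the Einstein system for the flag manifold $M=Sp(2p)/U(p)\times U(p)$ by exploiting the symmetry introduced by setting $\ell=2p$ in the general Type IIb data. First I would record the concrete coefficients: from Table 8 with $\ell=2p$ we have $d_1=2p^2$, $d_2=p(p+1)$, $d_3=2p^2$, $d_4=p(p+1)$, and from Lemma \ref{Lem2}(2) the triples $c_{12}^3=[123]$ and $c_{13}^4=[134]$ become explicit rational functions of $p$. Substituting these into the system (\ref{syst8}) (which is (\ref{syst4}) specialized to $Sp(\ell)$), I would obtain three polynomial equations in $x_1,x_2,x_3,x_4$ whose coefficients depend only on $p$.

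The K\"ahler-Einstein metrics $(a)$--$(d)$ are obtained for free: by Theorem \ref{The4} the natural K\"ahler-Einstein metric on $Sp(2p)/U(p)\times U(p)$ is $g=(\ell/2,\ell-p+1,(3\ell/2)-p+1,2\ell-p+1)=(p,p+1,2p+1,3p+1)$, which is metric $(a)$; the other three arise from the remaining three invariant complex structures guaranteed by Theorem \ref{The2}, equivalently by permuting the parameters according to the $\fr{t}$-chambers $C_1',C_3',C_4'$. These four metrics require no further work beyond verifying that each is a solution of the system, which is a direct substitution. The substantive part is locating the \emph{non-K\"ahler} solutions. Here I would impose the ansatz $x_2=x_4=1$, $x_1=x_3$, motivated by the discrete symmetry of the diagram that exchanges the two $U(p)$ factors (the two short-root black nodes). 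Under this ansatz the first and third equations of (\ref{syst8}) should collapse to a single quadratic in $x_1$; solving it yields the two solutions displayed in (\ref{sol5}), namely
\[
x_1=\frac{6p^3+11p^2+6p+1\pm A}{2(p+1)^2(3p+1)},\qquad x_3=\frac{6p^3+11p^2+6p+1\mp A}{2(p+1)^2(3p+1)},
\]
with $A=\sqrt{(p+1)^3(6p^2+5p+1)}$.

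The main obstacle, and the step I would be most careful about, is proving that these six metrics are \emph{all} the invariant Einstein metrics, i.e.\ that the count is exactly six and not merely ``at least six.'' For this I would invoke Graev's result (\cite{Gr}) that the number $\mathcal{E}(M)$ of invariant \emph{complex} Einstein metrics on $M=Sp(2p)/U(p)\times U(p)$ equals $12$; since complex solutions come with a doubling from the real/imaginary structure of the algebraic variety, matching the Einstein system's Bezout-type count, one expects precisely six real solutions to survive. Concretely, after setting $x_1=1$ (normalizing scale) the system (\ref{syst8}) is a zero-dimensional polynomial system whose complex solution count is $12$ by Graev; I would then show that exactly six of these twelve solutions are real and positive by exhibiting the six metrics above and arguing (via the discriminant $A$, and by checking the residual cases $x_1\neq x_3$ or $x_2\neq x_4$ yield only complex or non-positive solutions) that no further real positive solutions exist. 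The delicate point is ruling out spurious real solutions in the complementary cases: one must verify that the ansatz $x_2=1,x_2\neq x_4,x_1=x_3$ and the case $x_1\neq x_3$ produce solutions that are either non-real or coincide with the already-found ones, exactly as was done for the orthogonal analogue in the proof of Theorem \ref{The8}. Once the six real positive solutions are matched against $\mathcal{E}(M)=12$, the equality (rather than inequality) in the statement follows.
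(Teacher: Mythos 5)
Your overall strategy (specialize system (\ref{syst8}) to $\ell=2p$, read off the four K\"ahler--Einstein metrics from Theorem \ref{The4} and the four complex structures, then run the case analysis of Theorem \ref{The8} and close the count with Graev) is the same as the paper's, which simply says that ``a similar analysis'' to the $SO(4p)$ case applies. However, your central computational claim is wrong. Look at the metrics (\ref{sol5}): they have $x_1\neq x_3$ (the two entries carry opposite signs in front of $A$), so they cannot arise from the ansatz $x_2=x_4=1$, $x_1=x_3$. In fact, for $\ell=2p$ one has $d_1=d_3=2p^2$, $d_2=d_4=p(p+1)$ and $c_{12}^3=c_{13}^4=p^2(p+1)/(2(2p+1))$, and substituting $x_2=x_4=1$, $x_1=x_3=t$ into (\ref{syst8}) collapses all three equations to $2(p+1)t^2-2(2p+1)t+(2p+1)=0$, whose discriminant is $4(2p+1)^2-8(p+1)(2p+1)=-4(2p+1)<0$. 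So this ansatz contributes two \emph{complex} solutions and no real Einstein metrics. The real non-K\"ahler metrics come from the complementary sub-case $x_2=x_4=1$, $x_1\neq x_3$: adding the first two equations factors as $(x_3-x_1)\bigl(4(2p+1)-4(p+1)(x_1+x_3)\bigr)=0$, forcing $x_1+x_3=(2p+1)/(p+1)$, and back-substitution yields exactly (\ref{sol5}). The roles of the two sub-cases are thus reversed relative to $SO(4p)/U(p)\times U(p)$, where $x_1=x_3$ gave the real solutions and $x_1\neq x_3$ the complex ones; your proposal imports the orthogonal pattern without checking it.

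There is a second error that undermines the exactness claim: the paper quotes Graev as giving $\mathcal{E}(M)=10$ for $Sp(2n)/U(n)\times U(n)$, not $12$; the value $12$ is for $Sp(p+q)/U(p)\times U(q)$ with $p>q$, i.e.\ precisely \emph{not} the equal-rank case at hand. With $\mathcal{E}(M)=10$ the bookkeeping closes: four K\"ahler, two real non-K\"ahler from $x_1\neq x_3$, two non-real from $x_1=x_3$, and two more from the remaining case $x_2=1$, $x_2\neq x_4$, $x_1=x_3$ (which must be checked to be non-real or non-positive), giving precisely six real invariant Einstein metrics. If the count were $12$ as you assert, two complex solutions would be unaccounted for and the conclusion ``precisely six'' would not follow from your argument. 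Also, ``complex solutions come with a doubling'' is not a correct general principle; the right statement is only that non-real solutions occur in conjugate pairs, so one must still locate and discard each non-real solution explicitly.
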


\markboth{Andreas Arvanitoyeorgos and Ioannis Chrysikos}{Homogeneous Einstein metrics  on  flag manifolds  with four isotropy summands}
\section{The isometric problem}
\markboth{Andreas Arvanitoyeorgos and Ioannis Chrysikos}{Homogeneous Einstein metrics  on  flag manifolds  with four isotropy summands}

 We will examine the isometric problem for the homogeneous Einstein metrics 
stated in Theorems \ref{The5}, \ref{The6},  \ref{The7}, \ref{The77} \ref{The8} and \ref{The9}.
 In general, this is not  a trivial problem.  We follow the method presented in  
  \cite[p.~ 22]{Nik} (see also \cite[p.~8]{LNF}).
  
 Let $M=G/K$ be a generalized flag manifold with 
 $\fr{m}=\oplus_{i=1}^{4}\fr{m}_i$, $d_{i}=\dim \fr{m}_{i}$, and  $d=\sum_{i=1}^{4}d_{i}=\dim M$. 
 For any $G$-invariant Einstein metric $g=(x_1, x_2, x_3, x_4)$ on  $M$ 
 we determine a  scale invariant given by
 $H_{g}=V^{1/d}S_g$, where  $S_g$ is the scalar curvature of $g$, 
 and $V=V_{g}/V_{B}$ is the quotient of the volumes $V_{g}=\prod_{i=1}^{4}x_{i}^{d_i}$ 
 of the given metric $g$, and $V_{B}$ the volume of the normal metric 
 induced by the negative of the Killing form of $G$. We normalize $V_{B}=1$, 
 so $H_{g}=V_{g}^{1/d}S_g$.   
 The scalar curvature  $S_g$ of a $G$-invariant metric  $g$ on   $M$  
 is given by the following well known  formula (\cite{Wa2}):
    \begin{equation}\label{sc}
  S_g=\sum_{i=1}^{4}d_{i}\cdot r_{i}=\frac{1}{2}\sum_{i=1}^{4}\frac{d_{i}}{x_{i}}-\frac{1}{4}\sum_{1\leq i, j, k\leq 4}[ijk]\frac{x_{k}}{x_{i}x_{j}},
  \end{equation}
  where the components $r_{i}$ of the Ricci tensor  are given by one of the expressions (\ref{compI}),   (\ref{compII}), or (\ref{compIII}).
 The scalar curvature is a homogeneous  polynomial of degree $-1$  on the variables $x_{i}$ ($i=1, \ldots, 4$).  
 The volume $V_{g}$ is a  monomial of degree $d$,
 so   $H_{g}=V_{g}^{1/d}S_g$ is a homogeneous polynomial of degree 0. Therefore,
 $H_{g}$  is invariant under a common scaling of the variables $x_i$. 
  
 If two metrics are isometric then they have the same scale invariant, so if
  the scale invariants $H_{g}$ and $H_{g'}$ 
 are different, then the metrics $g$ and $g'$ can not  be isometric. 
But if $H_{g}=H'_{g}$
 we can not immediately conclude if the metrics $g$ and $g'$ are isometric or not. 
  For such a case we have to look at the   group of automorphisms of $G$ and check if there is an automorphism 
  which permutes the isotopy summands and takes one metric to another.  
  This usually arises for the K\"ahler-Einstein metrics. 
   Recall that  the K\"ahler-Einstein metrics which correspond to 
   equivalent invariant complex structures on $M$ are isometric 
  (\cite[Remark 8.96]{Be}).  
   For an application of this   case we refer to \cite[p.~ 315]{Kim}.  
 
 Let $M=G/K$ be a generalized flag manifold of Type I, and let $g=(x_1, x_2, x_3, x_4)$ 
 be a $G$-invariant metric on $M$. From (\ref{relations}) and (\ref{sc}) we easily obtain that
\begin{eqnarray*}
  S_g&=&\frac{1}{2}\sum_{i=1}^{4}\frac{d_i}{x_i} - \frac{[123]}{2}(\frac{x_1}{x_2x_3} + \frac{x_2}{x_1x_3} + \frac{x_3}{x_1x_2}) - 
 \frac{[134]}{2}(\frac{x_1}{x_3x_4} + \frac{x_3}{x_1x_4}+\frac{x_4}{x_1x_3})\nonumber \\ 
 && -\frac{[112]}{4}(\frac{x_2}{x_{1}^{2}} + \frac{2}{x_2})-\frac{[224]}{4}(\frac{x_4}{x_{2}^{2}} + \frac{2}{x_4}),
\end{eqnarray*}
    where the dimensions $d_{i}$ are given in Table 5, and the triples $[123], [134], [112]$, and $[224]$ are given in Table 7.
   For the invariant Einstein metrics presented in Theorem \ref{The5} we obtain  the following approximate values of the scale invariant
    $H_{g}$.
   \smallskip
   
\smallskip
  \begin{center}
\begin{tabular}{|c||c|c|c|c|}
\hline 
$\mbox{Einstein metrics}$ & $\mbox{Case of} \ F_4$ & $\mbox{Case of} \ E_7$ & $\mbox{Case of} \ {E_{8}}_{(ii)}$ & $\mbox{Case of} \ {E_{8}}_{(i)}$\\
\hline 
$g=(1, 2, 3,4)$ & $H_{g}\cong15.5381$ & $H_{g}\cong38.8641$ & $H_{g}\cong72.1927$ & $H_{g}\cong70.9532$ \\
\hline
$g_1$ & $H_{g_1}\cong15.7376$ & $H_{g_1}\cong39.0998$ & $H_{g_1}\cong72.8754$ & $H_{g_1}\cong70.6326$\\
\hline
$g_2$ & $H_{g_2}\cong15.7255$ & $H_{g_2}\cong38.9954$ & $H_{g_2}\cong72.6779$ & $H_{g_2}\cong77.6071$\\
\hline 
$g_3$ & $-$ & $-$ & $-$ & $H_{g_3}\cong70.6696$\\
\hline
$g_4$ & $-$ & $-$ & $-$ & $H_{g_4}\cong77.3436$\\
\hline
\end{tabular}
\end{center}
 \medskip
\smallskip 
 \begin{center}
{\sc{Table 9.}} {The  constants $H_{g}$  corresponding to Einstein metrics on $M=G/K$ of Type I.}
\end{center} 
\medskip
 
 From Table 9 it follows that the invariant Einstein metrics on a flag manifold of Type I given in Theorem \ref{The5} are not isometric.

Let $M=G/K$  be a generalized flag manifold of Type IIa, and 
let $g=(x_1, x_2, x_3, x_4)$ be a  $G$-invariant  Riemannian metric   on $M$.  
The scalar curvature of $g$ is given by
\[
S_g=\frac{1}{2}\sum_{i=1}^{4}\frac{d_i}{x_i} - \frac{[123]}{2}(\frac{x_1}{x_2x_3} + \frac{x_2}{x_1x_3} + \frac{x_3}{x_1x_2}) - 
 \frac{[234]}{2}(\frac{x_2}{x_3x_4} + \frac{x_3}{x_2x_4}+\frac{x_4}{x_2x_3}),
\]
where $d_{i}$ are given in Table 8 and $[123]$, $[234]$ are determined by Lemma \ref{Lem2}.

Let $M=E_{6}/SU(5)\times U(1)\times U(1)$. For the (non-K\"ahler)  
Einstein metrics $(a)-(d)$ given in Theorem \ref{The6}, 
we obtain that $H_{(a)}\cong 21.0363$, 
$H_{(b)}\cong 20.9202$, $H_{(c)}\cong 20.5771$, and $H_{(d)}\cong 21.1831$, respectively.  
Thus, these Einstein metrics are not isometric.
For the  K\"ahler-Einstein metrics $(e)-(h)$ we obtain 
that $H_{(e)}=H_{(f)}\cong 21.146 $ and $H_{(g)}=H_{(h)}\cong 20.9279$,  
so the metrics $(e)$ and $(f)$ can not be isometric to  the metrics $(g)$ and $(h)$.  
In \cite[p.~51, Table 4]{N} it was shown that
 $M$  admits two pairs of equivalent invariant complex structures, 
thus  there are two pairs of isometric  K\"ahler-Einstein metrics, 
namely $(e),(f)$, and $(g), (h)$.
Similar results are valid for the  invariant Einstein metrics of 
the flag manifold $E_7/SO(10)\times U(1)\times U(1)$ of
Theorem \ref{The6}.

 We now examine the space $M=SO(2\ell+1)/U(1)\times U(1)\times SO(2\ell-3)$ of Theorem \ref{The7}. 
 In this case the scale invariants 
 are functions of the parameter $\ell$.
 By computing these for the
 two non-K\"ahler Einstein metrics given in (\ref{sol1})
 we conclude that these metrics are non isometric.  
 The K\"ahler-Einstein metrics $(a)$ and $(b)$ have  the same scale invariants $H_{(a)}=H_{(b)}$
 and the same is true for the  K\"ahler-Einstein metrics  $(c)$ and $(d)$, i.e. $H_{(c)}=H_{(d)} $. 
  However,   $H_{(a)}\neq H_{(c)}$, 
 therefore     none of the metrics $(a), (b)$ are isometric to  the metrics $(c), (d)$.   
It is known (\cite[p.~47, Theorem 5]{N}) that  
 $M$ admits two pairs of equivalent complex structures,  thus there are two 
 pairs of isometric K\"ahler-Einstein metrics.  The above analysis  implies that the metrics $(a)$ and $(b)$ are isometric, 
 and the same is true  for the metrics  $(c)$ and $(d)$.  
 Similar results are valid for the invariant Einstein metrics of
 the flag $SO(2\ell)/U(1)\times U(1)\times SO(2\ell-4)$
 of Theorem \ref{The77}.
 
Finally, let  $M=G/K$ be a generalized flag manifold of Type IIb, and let $g=(x_1, x_2, x_3, x_4)$ 
 be a $G$-invariant metric on $M$. In this case the  scalar curvature is given by  
\[
S_g=\frac{1}{2}\sum_{i=1}^{4}\frac{d_i}{x_i} - \frac{[123]}{2}(\frac{x_1}{x_2x_3} + \frac{x_2}{x_1x_3} + \frac{x_3}{x_1x_2}) - 
 \frac{[134]}{2}(\frac{x_1}{x_3x_4} + \frac{x_3}{x_1x_4}+\frac{x_4}{x_1x_3}),
\]
where $d_{i}$ are given in Table 8, and $[123]$, $[134]$  by Lemma \ref{Lem2}.

Let $M=SO(4p)/U(p)\times U(p)$, $(p\geq 2)$.  
Then for any $G$-invariant metric $g$ on $M$  the scale invariant $H_{g}$ is a function of $p$.
The functions corresponding to the homogeneous Einstein metrics given in (\ref{sol3}) 
are different, so these metrics are not isometric.
The scale invariants  corresponding to the K\"ahler-Einstein metrics $(a)-(d)$ 
stated in Theorem \ref{The8}  are equal to each other.  
   In \cite[Theorem ~6]{N} it was shown that 
   $M$ admits two equivalent complex structures, 
thus there are two pairs of isometric K\"ahler-Einstein metrics.  
At this point we are unable to determine these pairs.

Let $M=Sp(2p)/U(p)\times U(p)$ $(p\geq 1)$.   For the (non-K\"ahler) Einstein metrics given in (\ref{sol5}),
one can easily see that the scale invariant functions are equal, but we are not able to    conclude  
whether these metrics are isometric or not. 
For the corresponding K\"ahler-Einstein metrics $(a)-(d)$ 
we also obtain equal scale invariant functions.  
According to \cite[Theorem~ 5]{N} $M$ admits two pairs of equivalent complex structures, 
 therefore there are two pairs of isometric K\"ahler-Einstein metrics.

  \section*{Acknowlegments}  
 This work is the main part of the second author's Ph.D. thesis at the University of Patras,
under the direction of the first author.  The  second author  wishes to thank Yusuke Sakane for several useful discussions concerning this project.
 He also acknowledges   Megan Kerr and  Yurii Nikonorov for   their suggestions about the isometric problem.

\end{document}